\definecolor{darkred}{rgb}{.7,0,0}
\definecolor{green}{rgb}{0,0.7,0}
\newtheoremstyle{thmm}{1.5ex plus 1ex minus .2ex}{1.5ex plus 1ex minus.2ex}{\rmfamily}{}{\bfseries}{}{1em}{} \theoremstyle{thmm}
\newtheorem{theorem}{Theorem}[section]
\newtheorem{lemma}{Lemma}[section]
\newtheorem{corollary}{Corollary}[section]
\newtheorem{remark}{Remark}[section]
\renewenvironment{proof}[1][Proof]{\noindent\textit{#1. }
}{\hfill$\square$}
\newcommand{\vertiii}[1]
{{\left\vert\kern-0.25ex\left
\vert\kern-0.25ex\left\vert #1
    \right\vert\kern-0.25ex\right
\vert\kern-0.25ex\right\vert}}
\def\d{{\rm d}}
\def\R{\mathbb{R}}
\def\C{\mathbb{C}}
\def\i{\mathrm{i}}
\def\ii{|\hspace{-1pt}\|}
\newcommand\blfootnote[1]{%
  \begingroup
  \renewcommand\thefootnote{}\footnote{#1}%
  \addtocounter{footnote}{-1}%
  \endgroup
}
\title{\Large\bf Maximal regularity of multistep fully discrete\\ finite element methods for parabolic equations}
\author{\normalsize Buyang Li\thanks{Department of Applied Mathematics, The Hong Kong Polytechnic University, Hung Hom, Kowloon, Hong Kong.
Email address: buyang.li@polyu.edu.hk }}
\date{}
\begin{document}

\maketitle

\vspace{-10pt}

\begin{abstract}
This article extends the semidiscrete maximal $L^p$-regularity results in \cite{Li2019} to multistep fully discrete finite element methods for parabolic equations with more general diffusion coefficients in $W^{1,d+\beta}$, where $d$ is the dimension of space and $\beta>0$. The maximal angles of $R$-boundedness are characterized for the analytic semigroup $e^{zA_h}$ and the resolvent operator $z(z-A_h)^{-1}$, respectively, associated to an elliptic finite element operator $A_h$. Maximal $L^p$-regularity, optimal $\ell^p(L^q)$ error estimate, and $\ell^p(W^{1,q})$ estimate are established for fully discrete finite element methods with multistep backward differentiation formula. \\ 

\noindent{\bf Key words:}$\,\,\,$ 
parabolic equation, finite element method, backward differentiation formulae, maximal regularity, analytic semigroup, resolvent 
\end{abstract}

\blfootnote{This work is partially supported by a Hong Kong RGC grant (project no. 15301818). }

\vspace{-10pt}

\setlength\abovedisplayskip{5pt}
\setlength\belowdisplayskip{5pt}

\section{Introduction}
\setcounter{equation}{0}

Let $\varOmega\subset\R^d$, $d\in\{2,3\}$, be a polygonal or polyhedral domain. We consider the initial and boundary value problem for a linear parabolic partial differential equation (PDE)
\begin{align} 
\label{PDE1} 
\left\{\begin{array}{ll}
\displaystyle
\frac{\partial u(t,x)}{\partial t}- \sum_{i,j=1}^d\frac{\partial}{\partial x_i}\bigg(a_{ij}(x)\frac{\partial u(t,x)}{\partial x_j}\bigg) = f(t,x)
&\mbox{for}\,\,(t,x)\in \R_+\times \varOmega,\\[10pt]
u(t,x)=0 
&\mbox{for}\,\,(t,x)\in \R_+\times\partial\varOmega ,\\[7pt]
u(0,x)=u_0(x)
&\mbox{for}\,\,x\in \varOmega ,
\end{array}\right. 
\end{align} 
where $a_{ij}=a_{ji}$ are real-valued functions satisfying the following ellipticity and regularity conditions: 
\begin{align}
&\lambda^{-1}|\xi|^2 
\le \mbox{$\sum_{i,j=1}^d$} a_{ij}(x) \xi_i\xi_j 
\le \lambda|\xi|^2,
&&\forall\, \xi=(\xi_1,\dotsc,\xi_d)\in\R^d,\,\,\,\forall\, x\in\varOmega , 
\label{Ellipticitiy} \\[5pt]
&a_{ij}=a_{ji}\in W^{1,d+\beta} 
&&\mbox{for some constants $\lambda,\beta>0$}.
\label{Regularity:aij}
\end{align}

It is known that the elliptic partial differential operator $A=\sum_{i,j=1}^d\frac{\partial}{\partial x_i}\big(a_{ij}(x)\frac{\partial }{\partial x_j}\big)$, under the Dirichlet boundary condition, generates a bounded analytic semigroup on the space $L^q:=L^q(\varOmega)$ for all $1<q<\infty$; see \cite[Theorem 2.4]{Ouhabaz1995}. When $u_0=0$, the solution of \eqref{PDE1} possesses maximal $L^p$-regularity on $L^q$, namely, 
\begin{align}\label{MaxLpReg}
\hspace{-5pt}
&\|\partial_tu\|_{L^p(\R_+;L^q)}
+\|A u\|_{L^p(\R_+;L^q)}\leq C_{p,q}
\|f\|_{L^p(\R_+;L^q)} 
\quad\forall\, 1<p,q<\infty ,\,\,\, 
\end{align} 
which is an important tool in studying well-posedness and regularity of solutions to nonlinear parabolic PDEs; see \cite{Amann1995,ClementPruss1992,LiYang2015}. In the numerical solution of parabolic equations, it is also desirable to have a discrete analogue of this estimate, which has a number of applications in analysis of stability and convergence of numerical methods for nonlinear parabolic problems, including semilinear parabolic equations with strong nonlinearities \cite{Geissert2007}, quasi-linear parabolic equations with nonsmooth coefficients \cite{LiSun2015-regularity}, optimal control and inverse problems \cite{Leykekhman-Vexler-2016,Leykekhman-Vexler-Walter-2019}, and so on.

Let $S_h$, $0<h<h_0$, be a family of Lagrange finite element subspaces of $H^1_0(\varOmega)$ consisting of all piecewise polynomials of degree$\,\le r$ subject to a quasi-uniform triangulation of the domain $\varOmega$, where $r\ge 1$ is any given integer. It is known that the semidiscrete finite element solutions of \eqref{PDE1}, defined by  
\begin{align}\label{FEMEq0}
\left\{\begin{aligned} 
&(\partial_t u_h,v_h)+\sum_{i,j=1}^d (a_{ij} \partial_ju_h ,\partial_iv_h)=(f,v_h) 
\quad \forall\, v_h\in S_h ,\,\,
\forall\, t > 0 ,\\[3pt]
&u_h(0)=u_{h,0} ,
\end{aligned} 
\right.
\end{align} 
possesses maximal $L^p$-regularity similarly as the continuous problem. Namely, when $u_{h,0}=0$ the solution of \eqref{FEMEq0} satisfies 
\begin{align}
&\|\partial_tu_h\|_{L^p(\R_+;L^q)}
+\|A_hu_h\|_{L^p(\R_+;L^q)}\leq C_{p,q} 
\|f\|_{L^p(\R_+;L^q)}  
\quad \forall\, 1<p,q<\infty,
\label{MaxLpReg-FEM}
\end{align} 
with a constant $C_{p,q}$ independent of the mesh size $h$, where $A_h:S_h\rightarrow S_h$ is the finite element approximation of the elliptic operator $A$, defined by   
\begin{align}\label{Discrete-Laplacian}
(A_h\phi_h,\varphi_h)=-\sum_{i,j=1}^d (a_{ij}\partial_j \phi_h,\partial_i\varphi_h) \quad \forall\, \phi_h, \varphi_h\in S_h .
\end{align} 
Estimate \eqref{MaxLpReg-FEM} was proved in smooth domains with the Neumann boundary condition \cite{Geissert2006,Li2015,Kashiwabara-Kemmochi-2020} and in polyhedral domains with the Dirichlet boundary condition \cite{Li2019,LiSun2017-MCOM}. The proof of \eqref{MaxLpReg-FEM} is closely related to the proof of the following maximum-norm stability  (cf. \cite{Leykekhman2004,NitscheWheeler1982,SchatzThomeeWahlbin1980,SchatzThomeeWahlbin1998,ThomeeWahlbin2000}):
\begin{align}\label{maximum-norm}
\|u-u_h\|_{L^\infty(0,T;L^\infty)}
\le C\ell_h \inf_{\chi_h\in \in L^\infty(0,T;S_h)}\|u-\chi_h\|_{L^\infty(0,T;L^\infty)} ,
\end{align} 
with $\ell_h=\ln(2+1/h)$. 
The two results \eqref{MaxLpReg-FEM} and \eqref{maximum-norm} are often proved simultaneously by similar techniques. 

The extension of semidiscrete maximal $L^p$-regularity results to fully discrete FEMs has been established for several different time discretization methods, including the backward Euler method \cite{AshyralyevPiskarevWeis2002,LiSun2017-SINUM}, discontinuous Galerkin method \cite{LeykekhmanVexler2016-NM}, $\theta$-schemes \cite{KemmochiSaito}, and A-stable multistep and Runge--Kutta methods \cite{KovacsLiLubich2016}. All these proofs use A-stability of the methods. It is known that A-stable multistep methods can be at most second-order accurate (cf. \cite[p. 247, Theorem 1.4]{Hairer-Wanner-1996}). Hence, maximal $L^p$-regularity of fully discrete FEMs with higher-order multistep methods, including backward differentiation formulae (BDF), has not been proved so far.  

The BDF method is the one of the most popular high-order methods in solving parabolic equations due to its ease of implementation. 
For $k=1,\dots,6$, we denote by $u^n$ and $u_h^n$, $n=k,\dots,N$, the solution of the semidiscrete and fully discrete $k$-step BDF methods, given by 
\begin{align}\label{BDF-Laplacian-1}
\frac{1}{\tau} \sum_{j=0}^k \delta_j u^{n-j}
=A u^n+f^n, \quad n \geq k 
\end{align}
and
\begin{align}\label{BDF-FEM}
\frac{1}{\tau} \sum_{j=0}^k \delta_j u_h^{n-j}
=A_h u_h^n+f_h^n, \quad n \geq k,
\end{align}
respectively, 
where the starting values $u^n$ and $u_h^n$, $n=0,\dots,k-1$, in the $k$-step BDF method are assumed to be given (obtained by other methods), and $\delta_j$, $j=0,\dots,k$, are the coefficients in the polynomial 
\begin{equation}
\label{BDF-delta-j}
\delta (\zeta) {} := \sum_{j=1}^k \frac 1j  (1-\zeta)^j 
=\sum\limits^k_{j=0}\delta_j \zeta ^{j} . 
\end{equation}
It is known that the $k$-step BDF method is A($\alpha_k$)-stable with angles $\alpha_1=\alpha_2=0.5\pi$, $\alpha_3=0.478 \pi$, $\alpha_4=0.408 \pi$, $\alpha_5=0.288 \pi$ and $\alpha_6=0.099 \pi$; see \cite[Section V.2]{Hairer-Wanner-1996}. Here, A$(\alpha)$-stability is equivalent to 
$$
|\arg \delta (\zeta)|\le \pi-\alpha \quad\mbox{for}\,\,\,\zeta\in\C\,\,\,\mbox{such that}\,\,\,|\zeta|\le 1,
$$ 
and A-stability is equivalent to A$(\alpha)$-stability with $\alpha=0.5\pi$. In particular, only one-step and two-step BDF methods are A-stable. 

Maximal $L^p$-regularity of the $k$-step BDF method, with $k=3,\dots,6$, requires an additional angle condition, i.e,  
\begin{equation}\label{angle-condition}
\mbox{$\big\{z(z -A)^{-1} \,:\, z\in\Sigma_{\theta+\frac{\pi}{2}} \big\}$ is $R$-bounded on $L^q$ for some angle $\theta>\frac{\pi}{2}-\alpha_k$,}
\end{equation} 
where $\Sigma_\theta$ is a sector on the complex plane defined by 
$$\Sigma_\theta
=\{z\in\C\backslash\{0\}: |{\rm arg}(z)|<\theta\} ,$$ 
and $R$-boundedness is a stronger concept than boundedness required in the context of maximal $L^p$-regularity (see the rigorous definition in the next section). For the continuous operator $A$, condition \eqref{angle-condition}  holds for all $\theta\in(0,\frac{\pi}{2})$ and therefore, maximal $L^p$-regularity of semidiscretization in time holds for all BDF methods up to order $6$. These results were proved in \cite[Theorems 4.1--4.2 and Remark 4.2]{KovacsLiLubich2016}. 

Similarly, the extension of maximal $L^p$-regularity to fully discrete FEMs, with the $k$-step BDF method in time, requires the following condition: 
\begin{equation}\label{angle-condition2} 
\mbox{$\big\{z(z -A_h)^{-1}P_h \,:\, z\in\Sigma_{\theta+\frac{\pi}{2}} \big\}$ is $R$-bounded on $L^q$ for some angle $\theta>\frac{\pi}{2}-\alpha_k$,} 
\end{equation} 
with an $R$-bound independent of $h$, where $P_h$ denotes the $L^2$-orthogonal projection onto the finite element space. However, \eqref{angle-condition2} has only been proved for sufficiently small $\theta$ (for example, see \cite[Corollary 2.1]{Li2019} and \cite{LiSun2017-MCOM}) for parabolic equations with constant coefficients. From the proof therein, it is not clear whether $\theta>\frac{\pi}{2}-\alpha_k$ for the $k$-step BDF method with $k=3,\dots,6$, and how much regularity the diffusion coefficients $a_{ij}$ should possess for \eqref{angle-condition2} to be valid in general polygons and polyhedra. 

In this article, we fill in the gap between semidiscrete FEMs and multistep fully discrete FEMs. In particular, we prove that \eqref{angle-condition2} holds for all $\theta\in(0,\frac{\pi}{2})$, for diffusion coefficients $a_{ij}\in W^{1,d+\beta}(\varOmega)$ with $\beta>0$. As a result, we obtain discrete maximal $L^p$-regularity of fully discrete FEMs with general $k$-step BDF methods. Namely, with zero starting values $u_h^n=0$, $n=0,\dots,k-1$, the solution of \eqref{BDF-FEM} satisfies 
\begin{equation*}
        \big\|(d_\tau u_h^n)_{n=k}^N\big\|_{\ell^p(L^q)} + \big\|(A_h  u_h^n )_{n=k}^N\big\|_{\ell^p(L^q)} \leq C_{p,q}\big\|(f_h^n)_{n=k}^N\big\|_{\ell^p(L^q)}
        \qquad\quad\,\,\,\forall\, 1<p,q<\infty ,
\end{equation*}
with $d_\tau u^n:= (u^n-u^{n-1})/\tau$. In addition, we establish an error estimate between semidiscrete and fully discrete solutions in the temporally discrete $L^p(0,T;L^q)$ norm, for $1<p,q<\infty$: 
\begin{align*}
\!\!\!  \big\|(P_hu^n-u_h^n )_{n=k}^N\big\|_{\ell^p(L^q)} 
        \le C_{p,q}\big\|(P_h u^n - R_hu^n )_{n=k}^N\big\|_{\ell^p(L^q)} 
      +C_{p,q} \sum_{n=0}^{k-1}\big\|P_h u^n - u_h^n \big\|_{L^q} ,
\end{align*}
where $R_h$ denotes the Ritz projection onto the finite element space. 
In the case $p=q=\infty$, we obtain a fully discrete analogue of \eqref{maximum-norm}, i.e., 
\begin{align*}
\max_{1\le n\le N} \|P_hu^n-u_h^n\|_{L^\infty} 
\le C\ell_{N} \max_{1\le n\le N} \|P_h u^n - R_hu^n\|_{L^\infty} 
+ C\max_{1\le n\le k-1} \big\|P_h u^n - u_h^n \big\|_{L^\infty} ,
\end{align*}
with $\ell_{N}= \ln (1+N) $. 
The three results above hold in general polygons and polyhedra, with constants independent of $h$, $\tau$ and $N$. 
In convex polygons and polyhedra, we furthermore obtain the following temporally discrete $L^p(0,T;W^{1,q})$ estimate (for zero starting values $u_h^n=0$, $n=0,\dots,k-1$): 
\begin{equation*}
        \big\|(d_\tau u_h^n )_{n=k}^N\big\|_{\ell^p(W^{-1,q})}   
        +\big\|(u_h^n )_{n=k}^N\big\|_{\ell^p(W^{1,q})}   
        \le C_{p,q}\big\|(f_h^n )_{n=k}^N\big\|_{\ell^p(W^{-1,q})}  
\quad\forall\, 1<p,q<\infty .
\end{equation*}

The rest of this paper is organized as follows. In the next section, we introduce basic notations and preliminary results to be used in this paper. The main theorems are presented in Section \ref{section:main} with proofs for most of the results. The most technical proof is postponed to Section \ref{section:proof}.

%
%
%
%
%
%

\section{Preliminaries}

We denote by $\varOmega\subset\R^d$, with $d\in\{2,3\}$, a general polygon or polyhedron (possibly nonconvex) unless otherwise stated. We denote by $W^{s,q}(\varOmega)$ the conventional Sobolev space for $s\in\R$ and $1\le q\le \infty$, with the abbreviations 
$$
W^{s,q}=W^{s,q}(\varOmega),
\quad
L^{q}=W^{0,q}(\varOmega)
\quad\mbox{and}\quad 
H^{s}=W^{s,2}(\varOmega)
.
$$ 
The space of infinitely smooth functions with compact support in $\varOmega$ is denoted by $C^\infty_0$, and the completion of $C^\infty_0$ in $H^s$ is denoted by $H^{s}_0$. The space of H\"older continuous functions on $\overline\varOmega$ is denoted by $C^{\beta}$. 

For any $q\in[1,\infty]$, we denote by $q'\in[1,\infty]$ be the number satisfying 
$1/q+1/q'=1$, 
and denote by 
\begin{align}\label{def-inner-product}
(u,v)=\int_\varOmega u(x)v(x)\d x 
\quad 
\forall\, u\in L^q\,\,\,\mbox{and}\,\,\,v\in L^{q'},
\end{align}
the pairing between two real-valued functions on $\varOmega$. 

For a sequence $u^n$, $n=0,1,\dots,$ we denote 
\begin{align*}
\dot u^n:= \frac{1}{\tau} \sum_{j=0}^k \delta_j u^{n-j}\,\,\,\mbox{for}\,\,\,n\ge k,
\quad\mbox{and}\quad
d_\tau u^n:=\frac{u^n-u^{n-1}}{\tau} \,\,\,\mbox{for}\,\,\,n\ge 1. 
\end{align*}

For any Banach space $X$ and any sequence $v^n\in X$, $n=1,\dots,N$, we denote 
\begin{align*}
\|(v^n)_{n=1}^N\|_{\ell^p(X)}
:=
\left\{
\begin{aligned}
&\bigg(\tau \sum_{n=1}^N \|v^n\|_{X}^p\bigg)^{\frac1p} &&\mbox{for}\,\,\,1\le p<\infty,\\[3pt]
&\sup_{1\le n\le N}\|v^n\|_{X} &&\mbox{for}\,\,\,p=\infty .
\end{aligned}
\right.
\end{align*}

For any given index set $D$, a collection of operators $\{M(z):L^q\rightarrow L^q: z\in D\}$ is called {\it $R$-bounded} on $L^q$, with $1<q<\infty$, if and only if the following inequality holds for all finite subcollection of operators $M(z_1),...,M(z_m)$: 
\begin{align*}
\bigg\|\bigg(\sum_{j=1}^m
|M(z_j)v_j|^2\bigg)^{\frac{1}{2}}\bigg\|_{L^q}
\! \! \! \leq C_R\bigg\|\bigg(\sum_{j=1}^m
|v_j|^2\bigg)^{\frac{1}{2}}\bigg\|_{L^q} ,
\quad
\forall\,\, v_1,...,v_m\in L^q ,
\end{align*}
where the smallest constant $C_R$ satisfying this inequality is called the $R$-bound of the collection. This characterization of $R$-boundedness on $L^q$ is equivalent to the definition of $R$-boundedness of operators on a general Banach space; see \cite[Section 1.f]{Weis2001-1}.  


It is known that maximal $L^p$-regularity of the semidiscrete BDF method \eqref{BDF-Laplacian-1} is related to the $R$-boundedness of the resolvent operators $z (z - A)^{-1}$, as shown in the following theorem.

\begin{theorem}[{\hspace{-0.25pt}\cite[Theorems 4.1--4.2]{KovacsLiLubich2016}}]\label{THM:BDFk}
{\it
If the collection of operators
$\big\{ z (z - A)^{-1} \,:\, z\in \Sigma_{\theta+\frac{\pi}{2}}\big\}$ is $R$-bounded on $L^q$ for an angle $\theta>\frac{\pi}{2}-\alpha_k$, then the semidiscrete solution given by \eqref{BDF-Laplacian-1}, with zero starting values $u^n=0$ for $n=0,\dots,k-1$, satisfies the temporally discrete maximal $L^p$-regularity estimate: 
    \begin{equation*}
        \big\|( \dot u^n )_{n=k}^N\big\|_{\ell^p(L^q)} + \big\|(A u^n )_{n=k}^N\big\|_{\ell^p(L^q)} \leq C_{p,q}\big\|(f^n)_{n=k}^N\big\|_{\ell^p(L^q)} 
        \quad\forall\, 1<p,q<\infty ,
    \end{equation*}
where the constant $C_{p,q}$ is independent of $\tau$ and $N$.
}
\end{theorem}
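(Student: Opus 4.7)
My plan is to recast the BDF-$k$ recursion as a Fourier-multiplier operator on $\ell^p(\Z;L^q)$ and to apply the discrete operator-valued Mikhlin multiplier theorem of Blunck, the discrete analogue of the Weis theorem. Extending sequences by zero for $n<0$, I introduce the generating functions $\widehat u(\zeta)=\sum_{n\ge 0} u^n\zeta^n$ and $\widehat f(\zeta)=\sum_{n\ge 0} f^n\zeta^n$. With zero starting values, the recursion \eqref{BDF-Laplacian-1} becomes the algebraic identity
\begin{equation*}
\frac{\delta(\zeta)}{\tau}\widehat u(\zeta)=A\widehat u(\zeta)+\widehat f(\zeta),
\qquad\text{whence}\qquad
A\widehat u(\zeta)=M(\zeta)\widehat f(\zeta)-\widehat f(\zeta),
\end{equation*}
where
\begin{equation*}
M(\zeta):=\frac{\delta(\zeta)}{\tau}\Big(\frac{\delta(\zeta)}{\tau}-A\Big)^{\!-1}.
\end{equation*}
The desired bound on $(A u^n)_{n\ge k}$ is thus equivalent to $\ell^p(\Z;L^q)$-boundedness of the Fourier multiplier with symbol $M$, and the companion bound on $(\dot u^n)_{n\ge k}$ then follows from the scheme itself.

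Blunck's theorem tells us that, since $L^q$ is UMD for $1<q<\infty$, a symbol $M\colon\mathbb{T}\setminus\{1\}\to B(L^q)$ yields a bounded $\ell^p(\Z;L^q)$-multiplier as soon as the two families $\{M(\zeta)\}$ and $\{(\zeta-1)(\zeta+1)M'(\zeta)\}$ over $\zeta\in\mathbb{T}\setminus\{1\}$ are $R$-bounded. The first family is handled by the geometric fact that A$(\alpha_k)$-stability is equivalent to $|\arg\delta(\zeta)|\le\pi-\alpha_k$ on $|\zeta|\le 1$, so that, combined with the hypothesis $\theta>\tfrac{\pi}{2}-\alpha_k$, one has $\delta(\zeta)/\tau\in\Sigma_{\theta+\pi/2}$ uniformly in $\tau>0$ and in $\zeta\in\mathbb{T}\setminus\{1\}$; composing the scalar map $\zeta\mapsto\delta(\zeta)/\tau$ with the assumed $R$-bounded family $\{z(z-A)^{-1}:z\in\Sigma_{\theta+\pi/2}\}$ then yields $R$-boundedness of $\{M(\zeta)\}$ with a $\tau$-independent bound.

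For the derivative condition, a short computation using the resolvent identity gives
\begin{equation*}
M'(\zeta)=\frac{\delta'(\zeta)}{\delta(\zeta)}\bigl[M(\zeta)-M(\zeta)^2\bigr],
\end{equation*}
so the task reduces to verifying (i) uniform boundedness of the scalar factor $(\zeta-1)(\zeta+1)\delta'(\zeta)/\delta(\zeta)$ on $\mathbb{T}\setminus\{1\}$, which follows from the local expansions $\delta(\zeta)=(1-\zeta)+O((1-\zeta)^2)$ and $\delta'(\zeta)=-1+O(1-\zeta)$ near $\zeta=1$ together with the zero-stability of $\delta$ at the remaining points of the closed unit disk, and (ii) $R$-boundedness of $\{M(\zeta)-M(\zeta)^2\}$, which follows from the already established $R$-boundedness of $\{M(\zeta)\}$ combined with the stability of $R$-bounded families under products and sums. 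Blunck's theorem then concludes the proof. The main obstacle I anticipate is precisely this derivative step: one cannot replace $R$-boundedness by mere boundedness of the resolvent, and the opening angle of the sector on which one needs $R$-boundedness is forced to exceed $\pi-\alpha_k$, which is why the hypothesis is phrased as $\theta>\tfrac{\pi}{2}-\alpha_k$ rather than $\theta>0$; this is also the reason the higher-order BDF schemes ($k\ge 3$) are genuinely more delicate than the A-stable cases $k\le 2$.
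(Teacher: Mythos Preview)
Your proposal is correct and is precisely the argument of the cited reference \cite{KovacsLiLubich2016}: pass to generating functions, identify the symbol $M(\zeta)=\tfrac{\delta(\zeta)}{\tau}\bigl(\tfrac{\delta(\zeta)}{\tau}-A\bigr)^{-1}$, and verify Blunck's two $R$-boundedness conditions using A$(\alpha_k)$-stability to place $\delta(\zeta)/\tau$ in the sector $\Sigma_{\theta+\pi/2}$ and the resolvent identity to handle the derivative. The present paper does not reprove Theorem~\ref{THM:BDFk} but quotes it as a preliminary; note, incidentally, that the same Blunck-multiplier machinery reappears in the paper's own proof of the equivalence \eqref{d_tau-equivlence}.
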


Similarly, maximal $L^p$-regularity of fully discrete FEMs with BDF methods in time is summarized in the following theorem.

\begin{theorem}[{\hspace{-0.25pt}\cite[Theorems 6.1]{KovacsLiLubich2016}}]\label{THM:FEM-BDFk}
{\it
If the collection of operators
$\big\{ z (z - A_h)^{-1} P_h \,:\, z\in \Sigma_{\theta+\frac{\pi}{2}} \big\}$ is $R$-bounded on $L^q$ for an angle $\theta>\frac{\pi}{2}-\alpha_k$ (with an $R$-bound independent of $h$), then the fully discrete finite element solution given by \eqref{BDF-FEM}, with zero starting values $u_h^n=0$ for $n=0,\dots,k-1$, satisfies the following discrete maximal $L^p$-regularity estimate: 
    \begin{equation*}
        \big\|( \dot u_h^n )_{n=k}^N\big\|_{\ell^p(L^q)} + \big\|(A_hu_h^n )_{n=k}^N\big\|_{\ell^p(L^q)} \leq C_{p,q}\big\|(f_h^n)_{n=k}^N\big\|_{\ell^p(L^q)} 
        \quad\forall\, 1<p,q<\infty ,
    \end{equation*}
where the constant $C_{p,q}$ is independent of $h$, $\tau$ and $N$.
}
\end{theorem}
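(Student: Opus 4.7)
The plan is to follow exactly the same blueprint as the semidiscrete case (Theorem \ref{THM:BDFk}), with $A$ replaced by $A_h$ and the $L^2$-projection $P_h$ inserted in the appropriate places. Conceptually, every step in the proof of Theorem \ref{THM:BDFk} is an operator-theoretic manipulation of the resolvent on $L^q$; the only place where $A$ (or $A_h$) enters essentially is through the $R$-boundedness of $\{z(z-A)^{-1}\}$ (or $\{z(z-A_h)^{-1}P_h\}$) on the sector $\Sigma_{\theta+\pi/2}$. The substitution is therefore legitimate as long as the $h$-uniform $R$-bound is used in place of its continuous counterpart.

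Concretely, I would first extend $u_h^n$ and $f_h^n$ by zero for $n<k$ and $n>N$, and apply the discrete $z$-transform $\tilde v(\zeta) := \sum_{n\ge 0} v^n \zeta^n$ to \eqref{BDF-FEM}. This turns the recursion into the algebraic identity
\begin{equation*}
\tfrac{1}{\tau}\delta(\zeta)\, \tilde u_h(\zeta) = A_h\tilde u_h(\zeta) + \tilde f_h(\zeta),
\end{equation*}
so that both $A_h\tilde u_h(\zeta)$ and $\tau^{-1}\delta(\zeta)\tilde u_h(\zeta)$ are recovered from $\tilde f_h(\zeta)$ by multiplication with the operator-valued symbol
\begin{equation*}
M(\zeta) := \tfrac{1}{\tau}\delta(\zeta)\bigl(\tfrac{1}{\tau}\delta(\zeta) - A_h\bigr)^{-1} P_h, \qquad |\zeta|\le 1,\ \zeta\ne 1.
\end{equation*}
By A$(\alpha_k)$-stability, $\delta(\zeta)\in \Sigma_{\pi-\alpha_k}$ for $|\zeta|\le 1$, and since $\theta > \tfrac{\pi}{2} - \alpha_k$ the rescaled value $\tau^{-1}\delta(\zeta)$ lies inside the sector $\Sigma_{\theta+\pi/2}$ where the hypothesis applies. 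The $R$-boundedness of $\{z(z-A_h)^{-1}P_h : z\in \Sigma_{\theta+\pi/2}\}$ therefore delivers an $R$-bound for $\{M(\zeta)\}$ that is independent of $h$, $\tau$, and $N$.

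The remaining step is to convert $R$-boundedness of the symbol $M$ into $\ell^p(L^q)$-boundedness of the corresponding convolution operator by invoking an operator-valued discrete Fourier multiplier theorem of Blunck type, which is available on the UMD space $L^q$ for $1<q<\infty$. This produces the asserted estimates on $\|(A_h u_h^n)\|_{\ell^p(L^q)}$ and $\|(\dot u_h^n)\|_{\ell^p(L^q)}$ with a constant depending only on $p$, $q$, $\alpha_k$, and the $R$-bound of the hypothesis, hence independent of $h$, $\tau$, and $N$.

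The main obstacle lies in the uniform control of $M(\zeta)$, together with the discrete derivative $\zeta\partial_\zeta M(\zeta)$ that the multiplier theorem requires, as $\zeta \to 1$ where $\delta(\zeta)\to 0$. Here one uses the local expansion $\delta(\zeta) = (1-\zeta) + O((1-\zeta)^2)$ to verify that $\tau^{-1}\delta(\zeta)$ enters the sector $\Sigma_{\theta+\pi/2}$ non-tangentially, together with the identity $\partial_z[z(z-A_h)^{-1}] = -A_h(z-A_h)^{-2}$ and the fact that products of $R$-bounded families are $R$-bounded, to obtain an $R$-bound for $\zeta\partial_\zeta M(\zeta)$ on $|\zeta|=1$ that is again uniform in $h$, $\tau$, and $N$. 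Once this technical point is settled, the multiplier theorem completes the argument exactly as in Theorem~6.1 of \cite{KovacsLiLubich2016}.
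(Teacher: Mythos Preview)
The paper does not prove this theorem; it is quoted verbatim as \cite[Theorem 6.1]{KovacsLiLubich2016}, and the paper's contribution is to verify its \emph{hypothesis} (the $R$-boundedness of $\{z(z-A_h)^{-1}P_h\}$ on the full sector) via Theorem~\ref{Resolvent-Laplacian}. Your sketch is a faithful outline of the proof in \cite{KovacsLiLubich2016}: generating functions reduce the problem to the operator-valued symbol $M(\zeta)=\tau^{-1}\delta(\zeta)(\tau^{-1}\delta(\zeta)-A_h)^{-1}P_h$, A($\alpha_k$)-stability together with $\theta>\tfrac\pi2-\alpha_k$ forces $\tau^{-1}\delta(\zeta)\in\Sigma_{\theta+\pi/2}$, and Blunck's discrete multiplier theorem on the UMD space $L^q$ converts the $R$-bound into $\ell^p(L^q)$-boundedness.

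One small correction: the derivative condition in Blunck's theorem (as invoked later in this paper, cf.\ the proof of Theorem~\ref{THM:BDF-Laplacian}) is the $R$-boundedness of $\{(1-\zeta)(1+\zeta)M'(\zeta):|\zeta|=1,\,\zeta\neq\pm1\}$, not of $\{\zeta\partial_\zeta M(\zeta)\}$. The verification is the same in spirit---one uses $\partial_z[z(z-A_h)^{-1}]=-A_h(z-A_h)^{-2}$ and the fact that $(1-\zeta)\delta'(\zeta)/\delta(\zeta)$ is bounded near $\zeta=1$---but you should state the correct hypothesis of the multiplier theorem you are invoking.
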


The condition of Theorem \ref{THM:BDFk} was proved in \cite[Remark 4.2]{KovacsLiLubich2016}, while the condition of Theorem \ref{THM:FEM-BDFk} is proved in the current paper; see the third result of Theorem \ref{Resolvent-Laplacian}.

\section{Main results}\label{section:main}
\setcounter{equation}{0}

Throughout this article, we assume that the coefficients $a_{ij}$ satisfy the ellipticity and regularity conditions in \eqref{Ellipticitiy}--\eqref{Regularity:aij}, and assume that the triangulation is quasi-uniform so that the Lagrange finite element spaces have all the properties in \cite[Section 3.2]{Li2019}. 

\subsection{Discrete semigroup, resolvent and maximal regularity}

For fully discrete FEMs with BDF methods in time, maximal $L^p$-regularity relies on the results in the following theorem. The proof of this theorem is presented in the next section. In \cite[Corollary 2.1]{Li2019}, the results were shown for some small $\theta\in(0,\frac{\pi}{2})$ instead of all $\theta\in(0,\frac{\pi}{2})$.  

\begin{theorem}[Estimates for the analytic semigroup and resolvent] \label{Resolvent-Laplacian}
{\it 
For all $\theta\in(0,\frac{\pi}{2})$ the following results hold:
\begin{enumerate}[label={\rm(\arabic*)},ref=\arabic*]\itemsep=5pt

\item The following collections of operators are all bounded on $L^q$ for $1\le q\le \infty$, and the bounds are independent of $h$ (but may depend on $\theta$):

(i) the semigroup $\{e^{zA_h}P_h: z\in\Sigma_{\theta}\}$ and its derivative $\{z A_h e^{zA_h}P_h: z\in\Sigma_{\theta}\};$

(ii) the resolvent operators $\{z(z-A_h)^{-1}P_h:z\in\Sigma_{\theta+\frac{\pi}{2}}\}$.

\item The semigroup of operators 
$\{e^{zA_h}P_h: z\in\Sigma_{\theta}\}$ is $R$-bounded
on $L^q$ for $1< q< \infty$, and 
the $R$-bound is independent of $h$ (but may depend on $\theta$ and $q$).

\item The collection of resolvent operators $\{z(z-A_h)^{-1}P_h:z\in\Sigma_{\theta+\frac{\pi}{2}}\}$
is $R$-bounded on $L^q$ for $1< q< \infty$, and 
the $R$-bound is independent of $h$ (but may depend on $\theta$ and $q$).  
\end{enumerate}
}
\end{theorem}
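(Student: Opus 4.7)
The plan is to treat the resolvent as the primary object: once the sectorial resolvent bound holds uniformly in $h$, the bounds on $e^{zA_h}P_h$ and on $zA_he^{zA_h}P_h$ follow from the Dunford--Cauchy representation
\[
e^{zA_h}P_h=\frac{1}{2\pi\i}\int_\Gamma e^{z\lambda}(\lambda-A_h)^{-1}P_h\,\d\lambda
\]
on a keyhole contour $\Gamma$ surrounding $\sigma(A_h)\subset(-\infty,0]$, together with differentiation in $z$. For the resolvent, I would first handle the $L^2$ case: since $a_{ij}=a_{ji}$, the operator $A_h$ is self-adjoint and negative on $L^2$ with the inner product $(\cdot,\cdot)$, so the spectral theorem yields $\|z(z-A_h)^{-1}\|_{L^2\to L^2}\le C_\theta$ uniformly in $h$ for every $z\in\Sigma_{\pi/2+\theta}$ and every $\theta\in(0,\pi/2)$. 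The endpoints $q=1$ and $q=\infty$ are dual to one another by the self-adjointness of $A_h$ and $P_h$, so I would concentrate on the $L^\infty$ estimate and recover $1<q<\infty$ by Riesz--Thorin interpolation against the $L^2$ bound.

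The core technical step is the $L^\infty$ resolvent bound $\|z(z-A_h)^{-1}P_h\|_{L^\infty\to L^\infty}\le C_\theta$ for every $\theta\in(0,\pi/2)$, uniformly in $h$. I would adapt the regularized discrete Green's function machinery of \cite{SchatzThomeeWahlbin1998,Li2019}. Writing $w_h=(z-A_h)^{-1}P_hf$ and representing $w_h(x_0)$ through a regularized discrete delta at $x_0$, the estimate reduces to suitable weighted $L^2$ bounds for the discrete Green's function $G_h^z(x_0,\cdot)$ on a dyadic family of annuli $\{D_j\}$ centred at $x_0$ with inner radii $d_j\gtrsim h$. These weighted bounds would follow by testing the resolvent equation against $\eta^2\,\overline{G_h^z}$ with a smooth cutoff $\eta$ adapted to the annular scale, and extracting a G\aa rding-type inequality with complex spectral parameter $z$. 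The symmetry $a_{ij}=a_{ji}$ is what allows the real part of the principal bilinear form to dominate the weight-commutator throughout the entire sector $\Sigma_{\pi/2+\theta}$, which removes the smallness restriction on $\theta$ present in \cite{Li2019}. The hypothesis $a_{ij}\in W^{1,d+\beta}$ enters through the embedding $W^{1,d+\beta}\hookrightarrow C^\beta(\overline{\varOmega})$, used in a coefficient-freezing step on each annulus so that the variable coefficients may be treated as a small H\"older perturbation of a constant-coefficient principal part.

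Parts (2) and (3) would then be obtained by upgrading the pointwise kernel bounds produced en route. Since the discrete Green's function $G_h^z(x,y)$ and the semigroup kernel $K_h(z;x,y)$ admit $h$-independent off-diagonal bounds of Gaussian or algebraic-decay type for $z$ in the respective sectors, one can dominate the square function $\bigl(\sum_{j=1}^m|z_j(z_j-A_h)^{-1}P_hv_j|^2\bigr)^{1/2}$ pointwise by a maximal-function-type operator applied to $\bigl(\sum_j|v_j|^2\bigr)^{1/2}$; $L^q$-boundedness of the latter for $1<q<\infty$ delivers the required $R$-bound, and the semigroup $R$-bound in (2) follows in the same way from the kernel bound on $K_h$, or alternatively by integrating the resolvent $R$-bound against an absolutely integrable scalar weight along $\Gamma$ and invoking the stability of $R$-bounds under such integration. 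The main obstacle throughout is the $L^\infty$ resolvent estimate at the full angle: the weighted-energy scheme must be arranged so that its constant does not degenerate as $\theta\uparrow\pi/2$, and it is precisely here that the self-adjointness of $A_h$ on $L^2$, combined with the H\"older continuity of the $a_{ij}$, would play the essential role, in contrast to the small-$\theta$ result in \cite{Li2019}.
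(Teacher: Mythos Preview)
Your route is resolvent-first; the paper's is semigroup-first with an operator rotation. Concretely, the paper fixes $\theta\in(-\tfrac{\pi}{2},\tfrac{\pi}{2})$ and studies the rotated generator $e^{\i\theta}A_h$ via the complex-time parabolic problem $\partial_t u_h - e^{\i\theta}A_h u_h=0$ and its Green's function $\Gamma_h^\theta(t,x,y)=\Gamma_h(te^{\i\theta},x,y)$. The entire dyadic local-energy machinery of \cite{Li2019} is rerun for this complex-rotated equation, using that the continuous kernel $G(z,x,y)$ already obeys Gaussian bounds on every sector $\Sigma_\varphi$, $\varphi<\tfrac{\pi}{2}$. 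This yields, for each fixed $\theta$, boundedness and analyticity of $E_h^\theta(t)=e^{te^{\i\theta}A_h}$ plus a maximal ergodic estimate; abstract semigroup theory then gives $\{z(z-e^{\i\theta}A_h)^{-1}P_h:z\in\Sigma_{\pi/2}\}$ bounded, and replacing $\theta$ by $\pm\theta$ and taking unions covers $\Sigma_{\pi/2+\theta}$ for every $\theta<\tfrac{\pi}{2}$. $R$-boundedness is obtained from the maximal ergodic estimate via Weis's criterion, first for $1<q\le2$ and then by duality. So the full angle comes not from a sharper weighted-energy inequality but from rotating the operator and reusing the small-angle argument at each rotation.

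There is a genuine gap in your diagnosis of why \cite{Li2019} was limited to small $\theta$. You attribute the restriction to a lack of symmetry and claim that $a_{ij}=a_{ji}$ ``removes the smallness restriction on $\theta$ present in \cite{Li2019}''. But \cite{Li2019} treats the Laplacian, which is already symmetric; the small angle there is an artefact of proving the semigroup bound only on the positive real axis and then invoking abstract analytic-extension theory, which yields only some unspecified opening angle. Symmetry is not the missing ingredient. Your resolvent-first scheme \emph{can} deliver the full sector---direct weighted-energy arguments for $(z-A_h)w_h=f$ in the style of Bakaev--Thom\'ee--Wahlbin do track $\arg z$ explicitly---but the reason is not the one you give, and you would still need to carry out those weighted estimates in nonsmooth polyhedra with $W^{1,d+\beta}$ coefficients, which is exactly where the paper invests its effort (via the $H^{1+\alpha}$ regularity lemma and a coefficient-freezing perturbation). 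Your $R$-boundedness sketch via pointwise kernel domination is plausible but less direct than the paper's route through the maximal ergodic estimate and Weis's transference from semigroup to resolvent.
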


When the initial value $u_{h,0}$ is zero, by applying the Laplace transform in time to equation \eqref{FEMEq0}, one can express the solution of the semidiscrete FEM method as
\begin{align}\label{dt-uh-Fourier-expr}
\partial_t u_h = \mathcal{L}_z^{-1} [ z (z-A_h)^{-1}  P_h (\mathcal{L} f)(z)]
= \mathcal{F}_s^{-1} [ is (is -A_h)^{-1} P_h  (\mathcal{F} \tilde{f})(s)] ,
\end{align}
where $\mathcal{L}$ and $\mathcal{F}$ denote the Laplace and Fourier transforms in time, respectively, with $\tilde{f}$ denoting the zero extension of $f$ in time to $t\in\R$. The notation $\mathcal{F}_s^{-1}$ standards for the inverse Fourier transform with respect to the variable $s$. The third result of Theorem \ref{Resolvent-Laplacian} implies that the operator $M(s)=is (is -A_h)^{-1}P_h $ satisfies the $R$-boundedness on $L^q$ of the two collections of operators: 
$$
\{M(s):S\in\R\backslash\{0\} \}
\quad\mbox{and}\quad
\{s M'(s):S\in\R\backslash\{0\} \} . 
$$
This means that $M(s)$ is a Mihlin multiplier (with an $R$-bounded independent of $h$).  The Mihlin multiplier theorem (cf. \cite[Theorem 2.b]{Weis2001-1}) and the expression \eqref{dt-uh-Fourier-expr} immediately imply that 
\begin{align*}
&\|\partial_tu_h\|_{L^p(0,T;L^q)} 
\le 
C_{p,q} \|f \|_{L^p(0,T;L^q)} 
\quad \forall\,\, 1< p,q<\infty .
\end{align*} 
We summarize the result in the following corollary, which was shown in \cite[Theorem 2.1]{Li2019} by a different approach for the heat equation (with $A_h$ replaced by $\Delta_h$). 

\begin{corollary}\label{Corollary_Max_Lp_FEM}
{\it 
If $u_{h,0}=0$ and $f\in L^p(0,T;L^q)$, then the finite element solution given by \eqref{FEMEq0} has maximal $L^p$-regularity$:$ 
\begin{align}
&\|\partial_tu_h\|_{L^p(0,T;L^q)} 
+\|A_hu_h\|_{L^p(0,T;L^q)} 
\le 
C_{p,q} \|f \|_{L^p(0,T;L^q)} 
\quad \forall\,\, 1< p,q<\infty ,
\label{MaxLp1}
\end{align} 
where the constant $C_{p,q}$ is independent of $h$ and $T$.   
}
\end{corollary}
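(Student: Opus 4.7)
The plan is to execute the program already sketched in the paragraph preceding the statement: express $\partial_t u_h$ as the action of an operator-valued Fourier multiplier with symbol $M(s)=is(is-A_h)^{-1}P_h$, then invoke an operator-valued Mihlin multiplier theorem. The required $R$-boundedness is supplied by Theorem \ref{Resolvent-Laplacian}(3), which has just been proved, and the bound on $A_h u_h$ is read off from the equation afterwards.

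Concretely, I would proceed in the following order. First, extend $f$ by zero in time to all of $\R$; with $u_{h,0}=0$, the equation \eqref{FEMEq0} is equivalent to $\partial_t u_h - A_h u_h = P_h\tilde f$ on $\R$ with $u_h\equiv 0$ for $t\le 0$, and Fourier transform in time gives precisely \eqref{dt-uh-Fourier-expr}. Second, I would verify that $M$ is an operator-valued Mihlin multiplier on $L^q$, namely, that both $\{M(s):s\in\R\backslash\{0\}\}$ and $\{sM'(s):s\in\R\backslash\{0\}\}$ are $R$-bounded with bound independent of $h$. The first is immediate from Theorem \ref{Resolvent-Laplacian}(3), since $i\R\backslash\{0\}\subset\Sigma_{\theta+\frac{\pi}{2}}$ for any $\theta\in(0,\frac{\pi}{2})$. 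For the second, direct differentiation yields
\begin{align*}
sM'(s) = is(is-A_h)^{-1}P_h + s^2(is-A_h)^{-2}P_h ,
\end{align*}
and since $(is-A_h)^{-1}P_h$ takes values in $S_h$, on which $P_h$ acts as the identity, one obtains $M(s)^2 = (is)^2(is-A_h)^{-2}P_h = -s^2(is-A_h)^{-2}P_h$, hence
\begin{align*}
sM'(s) = M(s) - M(s)^2 .
\end{align*}
Because $R$-bounded families are closed under sums and products, $\{sM'(s)\}$ is then $R$-bounded as well.

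With both $R$-boundedness properties in hand, the operator-valued Mihlin multiplier theorem \cite[Theorem 2.b]{Weis2001-1} produces $\|\partial_t u_h\|_{L^p(\R;L^q)}\le C_{p,q}\|f\|_{L^p(0,T;L^q)}$ with $C_{p,q}$ independent of $h$ and $T$; restriction to $[0,T]$ yields the $\partial_t u_h$ half of \eqref{MaxLp1}, and the $A_h u_h$ half follows from $A_h u_h = \partial_t u_h - P_h f$ together with the uniform $L^q$-boundedness of $P_h$ for $1<q<\infty$. The only non-mechanical step I anticipate is the identity $sM'(s)=M(s)-M(s)^2$, which requires keeping track of where $P_h$ sits in the composition; everything else amounts to invoking tools that are either just proved or completely standard.
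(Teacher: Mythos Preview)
Your proposal is correct and follows essentially the same route as the paper: the paragraph preceding the corollary already outlines exactly this Fourier-multiplier argument via $M(s)=is(is-A_h)^{-1}P_h$ and Weis's Mihlin theorem, with the $R$-boundedness supplied by Theorem~\ref{Resolvent-Laplacian}(3). Your explicit identity $sM'(s)=M(s)-M(s)^2$ and the final step $A_hu_h=\partial_tu_h-P_hf$ are more detailed than what the paper writes out, but the strategy is identical.
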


Corollary \ref{Corollary_Max_Lp_FEM} immediately implies the following error estimate between semidiscrete FEM and the PDE problem, as shown in \cite[Corolloary 2.2]{Li2019}. 

\begin{corollary}\label{Corollary_Error_FEM}
{\it 
If $f\in L^p(0,T;L^q)$, then the solutions of the semidiscrete FEM \eqref{FEMEq0} and the PDE problem \eqref{PDE1} satisfy the following estimate: 
\begin{align}
&\|u_h-P_hu\|_{L^p(0,T;L^q)}
\le
C_{p,q}(\|u-R_hu\|_{L^p(0,T;L^q)} 
+\|u_h^0-P_hu_0\|_{L^q}) 
\quad \forall\,\, 1< p,q<\infty ,
\label{Lp-Error-FEM} 
\end{align} 
where the constant $C_{p,q}$ is independent of $h$ and $T$.   
}
\end{corollary}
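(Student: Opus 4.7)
The plan is to derive an evolution equation for the discrete error $e_h := u_h - P_h u \in S_h$, apply the maximal $L^p$-regularity of Corollary \ref{Corollary_Max_Lp_FEM} to a convolution term, and handle the initial mismatch by a semigroup estimate.

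First, I would establish the key identity $A_h R_h u = P_h Au$, which follows directly from the defining relation of the Ritz projection and the definition \eqref{Discrete-Laplacian} of $A_h$: for every $v_h\in S_h$,
\begin{equation*}
(A_h R_h u, v_h) = -\sum_{i,j=1}^d(a_{ij}\partial_j R_h u, \partial_i v_h) = -\sum_{i,j=1}^d(a_{ij}\partial_j u, \partial_i v_h) = (Au, v_h) = (P_h Au, v_h).
\end{equation*}
Writing the FEM \eqref{FEMEq0} in strong form $\partial_t u_h = A_h u_h + P_h f$ and subtracting the $L^2$-projection of \eqref{PDE1}, I obtain
\begin{equation*}
\partial_t e_h = A_h u_h - P_h Au = A_h u_h - A_h R_h u = A_h e_h + A_h g,\qquad g:= P_h u - R_h u \in S_h,
\end{equation*}
with initial value $e_h(0) = u_h^0 - P_h u_0$.

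Next, by Duhamel's principle on $S_h$ and the commutation of $A_h$ with its semigroup,
\begin{equation*}
e_h(t) = e^{tA_h}(u_h^0 - P_h u_0) + A_h w(t),\qquad w(t) := \int_0^t e^{(t-s)A_h} g(s)\, ds.
\end{equation*}
The function $w\in S_h$ is precisely the semidiscrete FEM solution with source $g\in S_h$ and zero initial value, so Corollary \ref{Corollary_Max_Lp_FEM} gives
\begin{equation*}
\|A_h w\|_{L^p(0,T;L^q)} \le C_{p,q}\|g\|_{L^p(0,T;L^q)} \le C_{p,q}\|u - R_h u\|_{L^p(0,T;L^q)},
\end{equation*}
where the last step uses $g = P_h(u - R_h u)$ together with the $L^q$-stability of the $L^2$-projection on quasi-uniform Lagrange finite element spaces. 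For the initial-datum term I would invoke the $h$-uniform exponential decay $\|e^{tA_h}v\|_{L^q} \le C e^{-\alpha t}\|v\|_{L^q}$, with $\alpha>0$ inherited from the coercivity of the continuous bilinear form via Galerkin orthogonality and Poincar\'e's inequality, which yields
\begin{equation*}
\|e^{tA_h}(u_h^0 - P_h u_0)\|_{L^p(0,T;L^q)} \le C_{p,q}\|u_h^0 - P_h u_0\|_{L^q},
\end{equation*}
with constant independent of both $h$ and $T$. Adding the two contributions yields \eqref{Lp-Error-FEM}.

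The main obstacle I anticipate is the $T$-independence of the constant: the mere semigroup boundedness from part~(1) of Theorem \ref{Resolvent-Laplacian} would produce a $T^{1/p}$ factor in the initial-datum term, so one must separately supply the $h$-uniform exponential decay of $e^{tA_h}$, which rests on transferring the spectral gap of $A$ to its Galerkin approximation $A_h$. Everything else is a routine combination of Duhamel's formula, the identity $A_h R_h = P_h A$, and the maximal $L^p$-regularity estimate already in hand.
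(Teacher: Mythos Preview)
Your proposal is correct and follows essentially the same route the paper has in mind: derive the error equation via the identity $A_hR_h=P_hA$, apply the maximal $L^p$-regularity of Corollary~\ref{Corollary_Max_Lp_FEM} to the convolution part, and control the initial mismatch by the $h$-uniform exponential decay of $e^{tA_h}$ on $L^q$ (the paper defers the details to \cite[Corollary~2.2]{Li2019}, and uses exactly this decomposition in the fully discrete analogue, Theorem~\ref{THM:LpLq-Laplacian}). Your identification of the $T$-independence as the only nontrivial point, and its resolution through the spectral gap of $A_h$ (cf.\ the remark in the proof of Lemma~\ref{Lemma:Eh} that ``the largest eigenvalue of $A_h$ is strictly negative and bounded away from zero''), is accurate.
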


We present a variant version of maximal $L^p$-regularity for semidiscrete BDF methods. This variant version is often more useful than the original result in Theorem \ref{THM:BDFk} in analysis of nonlinear parabolic problems. 

\begin{theorem}\label{THM:BDF-Laplacian}
{\it 
If $u^n=0$ for $n=0,\dots,k-1$, then the semidiscrete solution given by \eqref{BDF-Laplacian-1} satisfies the following estimates, with a constant $C_{p,q}$ independent of $\tau$ and $N$. 
\begin{enumerate}
\item 
In a bounded Lipschitz domain, the maximal $L^p$-regularity estimate holds:  
\begin{equation*}
 \big\|(d_\tau u^n)_{n=k}^N\big\|_{\ell^p(L^q)} + \big\|(A  u^n )_{n=k}^N\big\|_{\ell^p(L^q)} \leq C_{p,q}\big\|(f^n)_{n=k}^N\big\|_{\ell^p(L^q)}
\hspace{42pt} \forall\, 1<p,q<\infty .
\end{equation*}

\item In a convex domain, the following additional estimate holds: 
\begin{equation*}
        \big\|(d_\tau u^n)_{n=k}^N\big\|_{\ell^p(W^{-1,q})} + \big\|(u^n )_{n=k}^N\big\|_{\ell^p(W^{1,q})} \leq C_{p,q}\big\|(f^n)_{n=k}^N\big\|_{\ell^p(W^{-1,q})}
        \quad\forall\, 1<p,q<\infty .
\end{equation*}
\end{enumerate}

}
\end{theorem}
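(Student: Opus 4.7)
The plan is to reduce both parts of the theorem to a direct application of Theorem~\ref{THM:BDFk} on an appropriate Banach space, followed by a purely algebraic postprocessing that converts the BDF discrete derivative $\dot u^n$ into the ordinary backward difference $d_\tau u^n$.

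For part~(1), I would first invoke the classical fact that, under \eqref{Ellipticitiy}--\eqref{Regularity:aij} on a bounded Lipschitz domain, the resolvent family $\{z(z-A)^{-1}:z\in\Sigma_{\theta+\pi/2}\}$ is $R$-bounded on $L^q$ for every $\theta\in(0,\pi/2)$ and every $1<q<\infty$. Theorem~\ref{THM:BDFk} then yields $\|(\dot u^n)\|_{\ell^p(L^q)}+\|(Au^n)\|_{\ell^p(L^q)}\le C_{p,q}\|(f^n)\|_{\ell^p(L^q)}$. It remains to replace $\dot u^n$ by $d_\tau u^n$ in this bound. I would use the factorization $\delta(\zeta)=(1-\zeta)\mu(\zeta)$ with $\mu(\zeta):=\sum_{j=1}^{k}\frac{1}{j}(1-\zeta)^{j-1}$. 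Zero-stability of BDF$k$ for $k\le 6$ forces $\mu(\zeta)\ne 0$ on $|\zeta|\le 1$, so $1/\mu$ is analytic on a disk of radius strictly greater than~$1$ and admits an absolutely summable Taylor expansion $1/\mu(\zeta)=\sum_{l\ge 0}c_l\zeta^l$. Writing the recurrence in terms of the backward shift $E^{-1}$ gives $\tau\dot u^n=\mu(E^{-1})(\tau d_\tau u^n)$, which inverts to $\tau d_\tau u^n=\sum_{l\ge 0}c_l\,\tau\dot u^{n-l}$, once we set $\dot u^n=0$ for $n<k$ (consistent with zero starting values). Young's convolution inequality then delivers $\|(d_\tau u^n)\|_{\ell^p(L^q)}\le\|(c_l)\|_{\ell^1}\|(\dot u^n)\|_{\ell^p(L^q)}$, completing part~(1).

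For part~(2), I would carry out the same scheme on $W^{-1,q}$ instead of $L^q$. In a convex polygon or polyhedron, elliptic $W^{2,q}$-regularity for $A$ with coefficients in $W^{1,d+\beta}$ (together with duality) shows that $A$ realizes an isomorphism $W^{1,q}_0\to W^{-1,q}$ and is sectorial on $W^{-1,q}$ with domain $W^{1,q}_0$; moreover, the associated resolvent family $\{z(z-A)^{-1}:z\in\Sigma_{\theta+\pi/2}\}$ is $R$-bounded on $W^{-1,q}$ for every $\theta\in(0,\pi/2)$ (the latter being standard on UMD spaces for second-order elliptic operators admitting a bounded $H^\infty$-calculus). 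Theorem~\ref{THM:BDFk} applied in $W^{-1,q}$ then supplies $\|(\dot u^n)\|_{\ell^p(W^{-1,q})}+\|(Au^n)\|_{\ell^p(W^{-1,q})}\le C_{p,q}\|(f^n)\|_{\ell^p(W^{-1,q})}$, and the isomorphism $A:W^{1,q}_0\to W^{-1,q}$ converts the $Au^n$ contribution into $\|u^n\|_{W^{1,q}}$. The passage from $\dot u^n$ to $d_\tau u^n$ in the $W^{-1,q}$ norm is by the identical convolution argument as in part~(1).

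The main obstacle I anticipate is the $R$-boundedness of the resolvent on $W^{-1,q}$ in convex polygons and polyhedra: it relies on elliptic $W^{2,q}$-regularity of $A$ in such domains, which genuinely requires the $W^{1,d+\beta}$-regularity of the coefficients together with convexity to avoid corner singularities, combined with the transfer of sectoriality to $R$-sectoriality available on UMD spaces. Once these ingredients are in place, the BDF-to-backward-Euler conversion is a purely algebraic consequence of zero-stability and Young's inequality and requires no further analysis.
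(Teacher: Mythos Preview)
Your proposal is sound, and for the conversion $\dot u^n\to d_\tau u^n$ you take a cleaner route than the paper. The paper proves the two-sided equivalence $\|(d_\tau u^n)\|_{\ell^p(X)}\sim\|(\dot u^n)\|_{\ell^p(X)}$ by checking that the symbols $(1-\zeta)/\delta(\zeta)$ and $\delta(\zeta)/(1-\zeta)$ satisfy Blunck's discrete multiplier conditions on the unit circle, which needs the UMD property of $X$. Your factorization $\delta(\zeta)=(1-\zeta)\mu(\zeta)$ with $\mu$ zero-free on the closed unit disk gives $1/\mu$ an absolutely summable Taylor expansion, so Young's inequality for $\ell^1$-convolutions yields the bound on any Banach space with no multiplier theory needed. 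One terminological point: what you are using is not zero-stability per se (which only forces simple roots on the unit circle) but the stronger fact that $\zeta=1$ is the \emph{only} root of $\delta$ on $|\zeta|\le 1$ for BDF$k$, $k\le 6$; the paper uses exactly this fact.

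For part~(2) your strategy---run the abstract discrete maximal regularity machinery on $W^{-1,q}$---is precisely what the paper invokes by citing \cite[Proposition~8.2]{AkrivisLiLubich2017} together with $q_d=\infty$ in convex domains. Two places to sharpen: first, the input you need is not $W^{2,q}$-regularity but the weaker isomorphism $A:W^{1,q}_0\to W^{-1,q}$, i.e.\ the estimate $\|v\|_{W^{1,q}}\le C\|g\|_{W^{-1,q}}$, which in convex polygons/polyhedra holds for all $1<q<\infty$ (whereas $W^{2,q}$-regularity can fail for large $q$). Second, the claim that $R$-sectoriality on $W^{-1,q}$ is ``standard'' deserves a concrete mechanism: e.g.\ transfer from $L^q$ via a similarity induced by $(-A)^{1/2}$ (Kato square root), or a direct citation covering the $W^{-1,q}$ realization. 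These are refinements rather than gaps.
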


\begin{proof}
The first result is a consequence of Theorem \ref{THM:BDFk} and \eqref{angle-condition} (which holds for all $\theta\in(0,\frac{\pi}{2})$, cf. \cite[Remark 4.2]{KovacsLiLubich2016}), together with the following equivalence relation: 
\begin{align}\label{d_tau-equivlence}
C^{-1}\|( \dot u^n )_{n=k}^N\|_{\ell^p(X)}
\le
\|(d_\tau u^n)_{n=k}^N\|_{\ell^p(X)}
\le
C\|( \dot u^n )_{n=k}^N\|_{\ell^p(X)} , 
\end{align}
where $X$ can be any UMD Banach space, including $L^q$ and $W^{-1,q}$ for $1<q<\infty$. This equivalence relation can be proved as follows. 

We consider the generating functions of $d_\tau u^n$ and $\dot u^n$, respectively, i.e.,
\begin{align*}
&\sum_{n=k}^\infty d_\tau u^n \zeta^n  = \frac{1-\zeta}{\tau} \sum_{n=k}^\infty u^n\zeta^n  ,\\
&\sum_{n=k}^\infty \dot u^n \zeta^n
=
\frac{1}{\tau} \delta(\zeta) \sum_{n=k}^\infty u^n\zeta^n ,
\end{align*}
where $\zeta$ is on the unit disk of the complex plane. 
The two equalities above imply 
\begin{align*}
& \sum_{n=k}^\infty d_\tau u^n \zeta^n  = \frac{1-\zeta}{\delta(\zeta)} \sum_{n=k}^\infty \dot u^n \zeta^n .
\end{align*}
By substituting $\zeta=e^{-i\theta}$ into the equality above, we obtain 
\begin{align*}
&[( d_\tau u^n)_{n=k}^\infty ]
= \mathcal{F}_\theta^{-1} \frac{1-e^{i\theta}}{\delta(e^{i\theta})} \mathcal{F}  [( \dot u^n)_{n=k}^\infty ](\theta) , \\
&[( \dot u^n)_{n=k}^\infty ]
= \mathcal{F}_\theta^{-1} \frac{\delta(e^{i\theta})}{1-e^{i\theta}} \mathcal{F}  [( d_\tau u^n)_{n=k}^\infty ](\theta) ,
\end{align*}
where $\mathcal{F}$ represents the Fourier transform (which transforms a sequence to a Fourier series) and $\mathcal{F}_\theta^{-1}$ is its inverse transform (which transforms a function of $\theta$ to a sequence). 

Since the polynomial $\delta(\zeta)$ associated to the $k$-step BDF method has only one zero at $\zeta=1$ on the closed unit disk of the complex plane, it follows that the functions 
$$
M_1(\zeta)=\frac{1-\zeta}{\delta(\zeta)}
\quad\mbox{and}\quad
M_2(\zeta)=\frac{\delta(\zeta)}{1-\zeta}
$$ 
are bounded from both below and above on the unit circle of the complex plane, therefore both satisfying Blunck's multiplier conditions (cf. \cite[Theorem 4]{JinLiZhou-MaxLp} or \cite[Theorem 1.3]{Blunck2001}), i.e.,
\begin{align*}
|M_j(\zeta)|\le C,\quad\mbox{and}\quad
|(1+\zeta)(1-\zeta)M_j'(\zeta)| \le C ,\quad j=1,2,
\quad\forall\, \zeta\in\C\,\,\,\mbox{such that}\,\,\,|\zeta|=1,\,\,\zeta\neq 1.
\end{align*}
Hence, the operators $\mathcal{F}_\theta^{-1} \frac{1-e^{i\theta}}{\delta(e^{i\theta})} \mathcal{F} $ and $\mathcal{F}_\theta^{-1} \frac{\delta(e^{i\theta})}{1-e^{i\theta}} \mathcal{F} $ are both bounded on $\ell^p(X)$. As a result, we have 
\begin{align*}
&\| ( d_\tau u^n)_{n=k}^\infty \|_{\ell^p(X)}
\le
C \| ( \dot u^n)_{n=k}^\infty \|_{\ell^p(X)}
\quad\mbox{and}\quad 
\| ( \dot u^n)_{n=k}^\infty \|_{\ell^p(X)}
\le
C \| ( d_\tau u^n)_{n=k}^\infty \|_{\ell^p(X)} . 
\end{align*}
By modifying $u^n$ for $n\ge N+1$, we can make $\dot u^n=0$ for $n\ge N+1$ without changing the values of $d_\tau u^n$ for $k\le n\le N$. Then the first of the two inequalities above implies
\begin{align*}
&\| ( d_\tau u^n)_{n=k}^N \|_{\ell^p(X)}
\le
C \| ( \dot u^n)_{n=k}^N \|_{\ell^p(X)}
\end{align*}
Similarly, by modifying $u^n$ for $n\ge N+1$, we can make $d_\tau u^n=0$ for $n\ge N+1$ without changing the values of $\dot u^n$ for $k\le n\le N$. Then we obtain  
\begin{align*}
\| ( \dot u^n)_{n=k}^N \|_{\ell^p(X)}
\le
C \| ( d_\tau u^n)_{n=k}^N \|_{\ell^p(X)} . 
\end{align*}
This proves the norm equivalence \eqref{d_tau-equivlence} and completes the proof of the first result. 

The second result was proved in \cite[Proposition 8.2]{AkrivisLiLubich2017} for $q_d'<q<q_d$, where $q_d$ is the maximal number such that the solution of the Poisson equation
\begin{align}\label{Poisson-Eq}
\left\{
\begin{aligned}
&A v=g &&\mbox{in}\,\,\,\varOmega,\\
&v=0 &&\mbox{on}\,\,\,\partial\varOmega,
\end{aligned}
\right.
\end{align}
satisfies estimate
\begin{align}\label{W1q-Poisson}
\|v\|_{W^{1,q}}
\le
C\|g\|_{W^{-1,q}}
\quad\forall\, q_d'<q<q_d .
\end{align}
In a convex domain we have $q_d=\infty$ (cf. 
\cite[Theorem 1]{Jia-Li-Wang-2010}). This implies the second result.  
\end{proof}
\medskip

Theorem \ref{THM:FEM-BDFk} and the third result of Theorem \ref{Resolvent-Laplacian}, together with the equivalence relation \eqref{d_tau-equivlence}, imply the following result for the fully discrete FEM. 

\begin{theorem}\label{THM:maximal-Lp-Laplacian}
{\it 
If $u_h^n=0$ for $n=0,\dots,k-1$, then the fully discrete solution given by \eqref{BDF-FEM} satisfies the discrete maximal $L^p$-regularity estimate:  
\begin{equation}\label{Maximal-Lp-FEM}
        \big\|(d_\tau u_h^n)_{n=k}^N\big\|_{\ell^p(L^q)} + \big\|(A_h  u_h^n )_{n=k}^N\big\|_{\ell^p(L^q)} \leq C_{p,q}\big\|(f_h^n)_{n=k}^N\big\|_{\ell^p(L^q)}
        \quad\forall\, 1<p,q<\infty ,
\end{equation}
where the constant $C_{p,q}$ is independent of $h$, $\tau$ and $N$.
}
\end{theorem}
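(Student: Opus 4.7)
The plan is to combine three ingredients already available in the paper, so the proof reduces to checking compatibility rather than doing substantial new analysis. First, I would invoke the third result of Theorem \ref{Resolvent-Laplacian}, which asserts that the collection $\{z(z-A_h)^{-1}P_h : z\in \Sigma_{\theta+\frac{\pi}{2}}\}$ is $R$-bounded on $L^q$ for every $\theta\in(0,\frac{\pi}{2})$, with $R$-bound independent of $h$. Since the BDF stability angles satisfy $\alpha_k>0$ for $k=1,\ldots,6$, the strict inequality $\frac{\pi}{2}-\alpha_k<\frac{\pi}{2}$ holds, so I can fix some admissible $\theta\in(\frac{\pi}{2}-\alpha_k,\frac{\pi}{2})$ for the given $k$.

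With this $\theta$, the hypothesis of Theorem \ref{THM:FEM-BDFk} is satisfied (uniformly in $h$), so that theorem applies and yields
\begin{equation*}
\big\|(\dot u_h^n)_{n=k}^N\big\|_{\ell^p(L^q)} + \big\|(A_h u_h^n)_{n=k}^N\big\|_{\ell^p(L^q)} \leq C_{p,q}\big\|(f_h^n)_{n=k}^N\big\|_{\ell^p(L^q)},
\end{equation*}
with $C_{p,q}$ independent of $h$, $\tau$ and $N$. This is almost the claimed estimate, but with the BDF operator $\dot u_h^n = \tau^{-1}\sum_{j=0}^k \delta_j u_h^{n-j}$ in place of the simple backward difference $d_\tau u_h^n$.

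To convert between the two, I would cite the norm equivalence \eqref{d_tau-equivlence}, which was established inside the proof of Theorem \ref{THM:BDF-Laplacian} via a Blunck multiplier argument on the unit circle. That argument used only that $X$ is a UMD Banach space and that the polynomial $\delta(\zeta)$ of the $k$-step BDF method has $\zeta=1$ as its unique zero on the closed unit disk. Since $L^q$ is UMD for $1<q<\infty$, the equivalence applies directly with $X=L^q$, giving
\begin{equation*}
\big\|(d_\tau u_h^n)_{n=k}^N\big\|_{\ell^p(L^q)} \le C\,\big\|(\dot u_h^n)_{n=k}^N\big\|_{\ell^p(L^q)}.
\end{equation*}
Combining this with the estimate from Theorem \ref{THM:FEM-BDFk} immediately yields \eqref{Maximal-Lp-FEM}.

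The main technical content has been pushed into Theorem \ref{Resolvent-Laplacian}, whose proof is deferred to Section \ref{section:proof}; the present theorem itself is essentially a bookkeeping step. The only point that requires any care is to confirm that the constants in the equivalence \eqref{d_tau-equivlence} depend only on the UMD constant of $L^q$ and on the BDF coefficients, so they remain independent of $h$, $\tau$, and $N$; this is immediate from the form of the multipliers $(1-\zeta)/\delta(\zeta)$ and $\delta(\zeta)/(1-\zeta)$, whose Blunck bounds depend only on $k$.
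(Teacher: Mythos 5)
Your proposal is correct and follows exactly the paper's own route: the paper derives this theorem by combining Theorem \ref{THM:FEM-BDFk} (whose hypothesis is supplied by the third result of Theorem \ref{Resolvent-Laplacian}, valid for all $\theta\in(0,\frac{\pi}{2})$ and hence for some $\theta>\frac{\pi}{2}-\alpha_k$) with the norm equivalence \eqref{d_tau-equivlence} applied to $X=L^q$. Your added remark on the uniformity of the constants in \eqref{d_tau-equivlence} is consistent with the Blunck multiplier argument given in the proof of Theorem \ref{THM:BDF-Laplacian}.
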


\subsection{Error estimate between semi- and fully discrete solutions}

Error estimates of semidiscrete BDF methods by using maximal $L^p$-regularity were established for semilinear and quasi-linear parabolic equations in \cite{AkrivisLi2018} and \cite{AkrivisLiLubich2017,KunstmanLiLubich2018}, respectively. In this subsection, we present a tool for establishing error estimates between fully discrete and semidiscrete solutions. 

\begin{theorem}\label{THM:LpLq-Laplacian} 
{\it 
If $f_h^n=P_hf^n$, then the fully discrete solution given by \eqref{BDF-FEM} and the semidiscrete solution given by \eqref{BDF-Laplacian-1} satisfy the following error estimate: 
\begin{align}\label{LpLq-Laplacian} 
        \big\|(P_hu^n-u_h^n )_{n=k}^N\big\|_{\ell^p(L^q)} 
        \le C_{p,q}\big\|(P_h u^n - R_hu^n )_{n=k}^N\big\|_{\ell^p(L^q)} 
      +C_{p,q} \sum_{n=0}^{k-1}\big\|P_h u^n - u_h^n \big\|_{L^q} , \\ 
\forall\, 1<p,q<\infty , \notag 
\end{align} 
where $R_h:H^1_0\rightarrow S_h$ is the Ritz projection defined by 
$$ 
\sum_{i,j=1}^d (a_{ij}\partial_j(w-R_hw),\partial_iv_h)=0\quad\forall\, v_h\in S_h,\,\,\,w\in H^1_0 ,
$$ 
and the constant $C_{p,q}$ is independent of $h$, $\tau$ and $N$. 
} 
\end{theorem}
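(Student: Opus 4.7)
First, I would derive the error equation. Writing $e_h^n := P_h u^n - u_h^n$ and $\rho_h^n := P_h u^n - R_h u^n \in S_h$, the identity $A_h R_h w = P_h A w$ (valid for $w \in H^1_0$ with $A w \in L^2$, verified immediately by testing both sides against an arbitrary $v_h \in S_h$ and using the definitions of $R_h$ and $A_h$) allows me to subtract \eqref{BDF-FEM} from the $P_h$-image of \eqref{BDF-Laplacian-1} and obtain
$$ \dot e_h^n - A_h e_h^n = -A_h \rho_h^n, \qquad n \ge k, $$
with starting values $e_h^j = P_h u^j - u_h^j$ for $j = 0, \dots, k - 1$.

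Directly applying Theorem \ref{THM:maximal-Lp-Laplacian} to this equation would give $\|(A_h e_h^n)\|_{\ell^p(L^q)} \le C\|(A_h \rho_h^n)\|_{\ell^p(L^q)}$, which is the wrong estimate since both sides are measured in too strong a norm. The key trick is to ``integrate once in space'': since the symmetric operator $A_h : S_h \to S_h$ is invertible, I introduce the auxiliary sequence $v_h^n := A_h^{-1} e_h^n \in S_h$, which satisfies
$$ \dot v_h^n - A_h v_h^n = -\rho_h^n, \qquad n \ge k, $$
with starting values $v_h^j = A_h^{-1} e_h^j$. I then decompose $v_h^n = v_h^{n,(1)} + v_h^{n,(2)}$, where $v_h^{n,(1)}$ has zero starting values and source $-\rho_h^n$, and $v_h^{n,(2)}$ solves the homogeneous BDF scheme with starting values $A_h^{-1} e_h^j$. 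Applying Theorem \ref{THM:maximal-Lp-Laplacian} to $v_h^{n,(1)}$ yields
$$ \|(A_h v_h^{n,(1)})_{n=k}^N\|_{\ell^p(L^q)} \le C_{p,q}\|(\rho_h^n)_{n=k}^N\|_{\ell^p(L^q)}, $$
which, after noting $A_h v_h^n = e_h^n$, is precisely the first term on the right-hand side of \eqref{LpLq-Laplacian}. Since $A_h$ commutes with the BDF operator, the sequence $w_h^n := A_h v_h^{n,(2)}$ solves the \emph{homogeneous} BDF scheme with starting values $e_h^j$ for $j = 0, \dots, k-1$, so the triangle inequality reduces the problem to bounding $\|(w_h^n)_{n=k}^N\|_{\ell^p(L^q)}$.

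What remains is to show that $\|(w_h^n)_{n=k}^N\|_{\ell^p(L^q)}$ is controlled by $\sum_{j=0}^{k-1}\|e_h^j\|_{L^q}$ with a constant independent of $N$. This comes down to uniform-in-$h$ exponential decay of the homogeneous discrete BDF semigroup in $L^q$, which is the main technical obstacle of the proof. The two ingredients are the uniform spectral gap of $A_h$, whose spectrum lies in $(-\infty, -c_0]$ for some $c_0 > 0$ independent of $h$ (a consequence of the coercivity in \eqref{Ellipticitiy} and Poincar\'e's inequality), and the $R$-bounded resolvent estimate of Theorem \ref{Resolvent-Laplacian}, which I would insert into a contour-integral representation of $w_h^n$ derived from the generating function $\delta(\zeta)$. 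The delicate point is to shift the contour past the uniform spectral gap while retaining the $R$-boundedness; this is made possible by the fact that $\delta(\zeta)$ has its only root on the closed unit disk at $\zeta = 1$, which stays uniformly separated from the spectrum of $\tau A_h$. Once exponential decay is in hand, a straightforward $\ell^p$ summation of the bound yields the desired estimate.
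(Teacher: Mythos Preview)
Your proposal is correct and follows essentially the same approach as the paper: derive the error equation, apply $A_h^{-1}$ to lower the norm, split into a zero-starting-value part (handled by Theorem~\ref{THM:maximal-Lp-Laplacian}) and a homogeneous part (handled by exponential decay of the discrete BDF solution operators, proved via a contour-integral representation and the resolvent bounds of Theorem~\ref{Resolvent-Laplacian} combined with the uniform spectral gap of $A_h$). The only difference is in the last step: the paper extracts the $\ell^p(L^q)$ bound on the homogeneous part by real interpolation between an $\ell^\infty(L^q)$ and an $\ell^{1,\infty}(L^q)$ estimate (using two of the three bounds in its Lemma~\ref{Lemma:Eh}), whereas your direct $\ell^p$-summation of the single exponential bound $\|w_h^n\|_{L^q}\le Ce^{-\lambda_0 t_{n+1}}\sum_{j=0}^{k-1}\|e_h^j\|_{L^q}$ is a legitimate and slightly more economical route to the same conclusion.
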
 

\begin{proof} 
It is known that the Ritz projection defined above satisfies the following identity: 
$$P_h A=A_hR_h \quad\mbox{on}\,\,\,H_0^1.$$ By applying the operator $P_h$ to \eqref{BDF-Laplacian-1} and using the identity above, we obtain 
\begin{align}\label{BDF-Laplacian-2}
\frac{1}{\tau} \sum_{j=0}^k \delta_j P_hu^{n-j}
=A_hR_hu^n+P_hf^n, \quad n \geq k .
\end{align}
Then, subtracting \eqref{BDF-FEM} from \eqref{BDF-Laplacian-2}, we have
\begin{align}\label{BDF-FEM-Lapalcian-Error}
\frac{1}{\tau} \sum_{j=0}^k \delta_j (P_hu^{n-j}-u_h^{n-j})
- A_h(P_hu^n-u_h^n) = A_h (R_hu^n-P_hu^n), \quad n \geq k .
\end{align}
Applying the operator $A_h^{-1}$ to the equation above, we obtain
\begin{align}\label{BDF-FEM-Lapalcian-Error}
\frac{1}{\tau} \sum_{j=0}^k \delta_j [A_h^{-1}(P_hu^{n-j}-u_h^{n-j})]
- A_h [A_h^{-1}(P_hu^n-u_h^n) ]=  R_hu^n-P_hu^n, \quad n \geq k .
\end{align}

We decompose the solution of \eqref{BDF-FEM-Lapalcian-Error} into two parts, i.e., 
\begin{align}\label{d-wh-vh}
A_h^{-1}(P_hu^{n}-u_h^{n})=w_h^n+v_h^n , 
\end{align}
with 
\begin{align}\label{BDF-FEM-whn-j-Error1}
\left\{
\begin{aligned}
&\frac{1}{\tau} \sum_{j=0}^k \delta_j w_h^{n-j}
- A_h w_h^n =  R_hu^n-P_hu^n, && n \geq k ,\\
&w_h^n=0, &&n=0,\dots,k-1,
\end{aligned}
\right.
\end{align}
and
\begin{align}\label{BDF-FEM-Lapalcian-Error1}
\left\{
\begin{aligned}
&\frac{1}{\tau} \sum_{j=0}^k \delta_j v_h^{n-j}
- A_h v_h^n =  0, && n \geq k ,\\
&v_h^n=A_h^{-1}(P_hu^{n}-u_h^{n}),  &&n=0,\dots,k-1 .
\end{aligned}
\right.
\end{align}
Then Theorem \ref{THM:maximal-Lp-Laplacian} implies the following estimate for $w_h^n$: 
\begin{equation}\label{Estimate-wh}
\big\|(A_hw_h^n )_{n=k}^N\big\|_{\ell^p(L^q)} \leq C_{p,q}\big\|(R_hu^n-P_hu^n)_{n=k}^N\big\|_{\ell^p(L^q)}
        \quad\forall\, 1<p,q<\infty .
\end{equation}
The solution of \eqref{BDF-FEM-Lapalcian-Error1} can be represented by the starting values as (cf. \cite[Lemma 10.3]{Thomee2006} or Lemma \ref{Lemma:Eh} below) 
\begin{equation}\label{Repr-Forml}
v_h^n
= 
\sum_{j=0}^{k-1} E_h^{n,j} A_h^{-1}(P_hu^j-u_h^j) ,\quad\text{for}\,\,\, n\geqslant k ,
\end{equation}
where $E_h^{n,j}:S_h\rightarrow S_h$ are some operators satisfying the following estimates (for some constant $\lambda_0>0$; see Lemma \ref{Lemma:Eh}): 
\begin{align}\label{beta-bound}
\begin{aligned}
&\|E_h^{n,j}\phi_h\|_{L^q} \le Ce^{-\lambda_0 t_{n+1} } \|\phi_h\|_{L^q} &&\forall\, \phi_h\in S_h, \,\,\,\forall\, 1\le q\le \infty, \\
&\|A_hE_h^{n,j}\phi_h\|_{L^q} \le Ce^{-\lambda_0 t_{n+1} }\|A_h\phi_h\|_{L^q} &&\forall\, \phi_h\in S_h, \,\,\,\forall\, 1\le q\le \infty, \\
&\|A_hE_h^{n,j}\phi_h\|_{L^q} \le Ce^{-\lambda_0 t_{n+1} } t_{n+1}^{-1}\|\phi_h\|_{L^q}  
&&\forall\, \phi_h\in S_h , \,\,\,\forall\, 1\le q\le \infty . 
\end{aligned}
\end{align}
Substituting the second and third estimates of \eqref{beta-bound} into \eqref{Repr-Forml}, we obtain 
\begin{align*}
\begin{aligned}
&\| (A_hv_h)_{n=k}^N \|_{\ell^\infty(L^q)} \le C\sum_{j=0}^{k-1}\| P_hu^j-u_h^j \|_{L^q} , \\
&\| (A_hv_h)_{n=k}^N \|_{\ell^{1,\infty}(L^q)} 
\le C\sum_{j=0}^{k-1} \|A_h^{-1}(P_hu^j-u_h^j)\|_{L^q} \le C\sum_{j=0}^{k-1}\| P_hu^j-u_h^j \|_{L^q} ,
\end{aligned}
\end{align*}
where $\ell^{1,\infty}$ is the weak-type $\ell^1$; see \cite[Section 1.1]{Grafakos2008}. 
By the real interpolation between $\ell^{1,\infty}(L^q)$ and $\ell^{\infty}(L^q)$ (cf. \cite[Proposition 1.1.14]{Grafakos2008}), we have 
\begin{align}\label{Estimate-vh}
\| (A_hv_h)_{n=k}^N \|_{\ell^p(L^q)}
&\le
C\| (A_hv_h)_{n=k}^N \|_{\ell^{1,\infty}(L^q)}^{\frac1p}
\| (A_hv_h)_{n=k}^N \|_{\ell^\infty(L^q)}^{1-\frac1p} \notag \\ 
&\le
C\sum_{j=0}^{k-1}\| P_hu^j-u_h^j \|_{L^q} .
\end{align}
Then, substituting \eqref{Estimate-wh} and \eqref{Estimate-vh} into \eqref{d-wh-vh}, we obtain the desired estimate \eqref{LpLq-Laplacian}. 

In this proof we have used \eqref{beta-bound}, which is proved in the following lemma.
\hfill
\end{proof}

\begin{lemma}[Estimates of the solution operator for fully discrete BDF methods]\label{Lemma:Eh}
{\it
The solution of \eqref{BDF-FEM-Lapalcian-Error1} can be represented by \eqref{Repr-Forml}, where the operator $E_h^{n,j}$ satisfies the estimates in \eqref{beta-bound}, with a constant $C$ independent of $h$, $\tau$, $n\ge k$ and $1\le q\le \infty$. 
}
\end{lemma}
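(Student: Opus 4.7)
The plan is to recast \eqref{BDF-FEM-Lapalcian-Error1} as a contour integral via generating functions and then deform the contour outside the closed unit disk to harvest exponential decay, using A($\alpha_k$)-stability of BDF together with the uniform resolvent bounds proved in Theorem~\ref{Resolvent-Laplacian}. First I introduce $V_h(\zeta)=\sum_{n\ge0} v_h^n\zeta^n$ and translate the recurrence in \eqref{BDF-FEM-Lapalcian-Error1} into the algebraic equation $(\delta(\zeta)/\tau-A_h)V_h(\zeta)=\sum_{m=0}^{k-1}K_m(\zeta)\,v_h^m$, where $K_m(\zeta)=\tau^{-1}\zeta^m\sum_{j=0}^{k-1-m}\delta_j\zeta^j$ is a polynomial of degree $\le k-1$ collecting the initial-data contributions. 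Extracting the coefficient of $\zeta^n$ by Cauchy's formula on a small circle $|\zeta|=r<1$ yields
\begin{equation*}
E_h^{n,m}=\frac{1}{2\pi\i}\oint_{|\zeta|=r}\zeta^{-n-1}\Bigl(\frac{\delta(\zeta)}{\tau}-A_h\Bigr)^{-1}K_m(\zeta)\,d\zeta,
\end{equation*}
which is exactly \eqref{Repr-Forml}.

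Next I verify that the integrand extends analytically through a collar outside $|\zeta|=1$. By A($\alpha_k$)-stability, $\delta(\zeta)\in\overline{\Sigma_{\pi-\alpha_k}}$ for $|\zeta|\le 1$, so $\delta(\zeta)/\tau$ lies in the resolvent set of $A_h$ whenever $\zeta\neq 1$, and by Theorem~\ref{Resolvent-Laplacian}(1)(ii) the operator norm satisfies $\|(\delta(\zeta)/\tau-A_h)^{-1}\|_{L^q\to L^q}\le C\tau/|\delta(\zeta)|$ uniformly in $h$ and in $1\le q\le\infty$. Because $A_h$ has a discrete spectral gap $\sigma(A_h)\subset(-\infty,-\lambda_0]$ (uniform in $h$ by discrete Poincaré), there is $c_0>0$ small such that analyticity persists on an annulus-like region $\{r\le |\zeta|\le 1+c_0\tau\}$, with a Hankel-type indentation $\eta=(1-\zeta)/\tau$ avoiding the image of $(-\infty,-\lambda_0/2]$ near $\zeta=1$. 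Deform the Cauchy contour to this enlarged curve $\Gamma_\tau$; since $|\zeta|^{-n-1}\le (1+c_0\tau)^{-n-1}\le e^{-\lambda_0 t_{n+1}}$ on $\Gamma_\tau$ (after adjusting $c_0$ relative to $\lambda_0$), the prefactor $e^{-\lambda_0 t_{n+1}}$ drops out automatically.

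The three bounds in \eqref{beta-bound} then follow from three different ways of combining the integrand. For the first, $|K_m(\zeta)|\le C/\tau$ while $\|(\delta(\zeta)/\tau-A_h)^{-1}\|\le C\tau/|\delta(\zeta)|$, and on the Hankel portion the Jacobian $d\zeta=-\tau\,d\eta$ cancels the $\tau^{-1}$ from $K_m$, leaving a convergent integral of $1/|\eta|$ (controlled away from the origin by the contour geometry); away from $\zeta=1$ the factor $1/|\delta(\zeta)|$ is bounded. For the second bound I apply $A_h$ inside the integral and use the algebraic identity $A_h(\delta(\zeta)/\tau-A_h)^{-1}=(\delta(\zeta)/\tau)(\delta(\zeta)/\tau-A_h)^{-1}-I$; the contribution of $-I$ is, by Cauchy's formula, the coefficient of $\zeta^n$ in the polynomial $K_m$, which vanishes for $n\ge k$, and the remaining term is estimated by pulling $A_h$ onto $\phi_h$ via $A_h(\delta(\zeta)/\tau-A_h)^{-1}=(\delta(\zeta)/\tau-A_h)^{-1}A_h$. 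For the third bound I instead exploit that $z(z-A_h)^{-1}$ is uniformly bounded on $\Sigma_{\pi/2+\theta}$ by Theorem~\ref{Resolvent-Laplacian}(1)(ii), so on the Hankel portion the product $(\delta(\zeta)/\tau)(\delta(\zeta)/\tau-A_h)^{-1}K_m(\zeta)$ carries an extra factor $|\eta|$; after the change of variables the integral behaves like $\int |\eta|\,e^{-t_{n+1}\operatorname{Re}\eta}\,d|\eta|\sim t_{n+1}^{-1}$.

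The main obstacle will be making the contour deformation rigorous with constants independent of $h$, $\tau$, and the exponent $q\in[1,\infty]$ (including the endpoints). This requires choosing the Hankel indentation near $\zeta=1$ precisely so that it stays in the sector $\Sigma_{\pi/2+\theta}\cup\{\operatorname{Re} z>-\lambda_0/2\}$ where the uniform resolvent estimate of Theorem~\ref{Resolvent-Laplacian}(1) is available, while simultaneously yielding the uniform decay $e^{-\lambda_0 t_{n+1}}$ on the circular portion. The endpoint cases $q=1$ and $q=\infty$ are critical since only boundedness (not $R$-boundedness) is needed there, but this is exactly what Theorem~\ref{Resolvent-Laplacian}(1) supplies.
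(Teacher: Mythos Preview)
Your approach is essentially the paper's: generating functions, Cauchy's integral formula, deformation of the contour using the A($\alpha_k$)-stability and the uniform resolvent bounds of Theorem~\ref{Resolvent-Laplacian}. Two points of difference are worth noting.

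First, your formula for $K_m$ is incomplete. If $V_h(\zeta)=\sum_{n\ge 0}v_h^n\zeta^n$ includes the starting values, then computing $(\delta(\zeta)/\tau-A_h)V_h(\zeta)$ produces, for each $m=0,\dots,k-1$, not only the polynomial $\tau^{-1}\zeta^m\sum_{j=0}^{k-1-m}\delta_j\zeta^j$ but also an additional operator term $-A_h\zeta^m$ acting on $v_h^m$. This extra term does not disappear and must be handled (via $A_h(\delta/\tau-A_h)^{-1}=(\delta/\tau)(\delta/\tau-A_h)^{-1}-I$, the identity you already invoke for the second bound). The paper sidesteps this entirely by first resetting $v_h^n=0$ for $n\le k-1$ and pushing the initial data into a source $g_h^n$ supported on $k\le n\le 2k-1$; then the generating-function identity is simply $(\tau^{-1}\delta(\zeta)-A_h)v(\zeta)=\sum_{m=k}^{2k-1}g_h^m\zeta^m$ with \emph{scalar} right-hand side, and the operator $E_h^{n,j}$ is a finite linear combination of the kernels $M_{n-m}=\frac{1}{2\pi i}\int (\tau^{-1}\delta(e^{-\tau z})-A_h)^{-1}e^{t_{n-m}z}\,dz$.

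Second, rather than working in the $\zeta$-plane with a circle of radius $1+c_0\tau$ and an informal Hankel indentation, the paper substitutes $\zeta=e^{-\tau z}$ and deforms to the explicit contour $\varGamma^{\tau}_{\kappa,n}=\partial(-\lambda_0+\Sigma_\kappa^\tau)$ truncated by the arc $|z+\lambda_0|=t_{n+1}^{-1}$, invoking the quantitative mapping properties of $\delta(e^{-\tau z})$ from \cite{Jin-Li-Zhou-2017-BDF}. This makes the exponential factor $e^{-\lambda_0 t_{n+1}}$ and the $t_{n+1}^{-1}$ bound fall out by direct computation on the two contour pieces, without the vagueness you flag in your last paragraph. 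Your $\zeta$-plane picture is morally the preimage of this contour, but the $z$-plane formulation is what makes the constants transparently independent of $h$, $\tau$, and $q\in[1,\infty]$.
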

\begin{proof}
By setting $v_h^n=0$ for $n=0,\dots,k-1$ without changing the values of $v_h^n$ for $n\ge k$, equation \eqref{BDF-FEM-Lapalcian-Error1} can be rewritten as 
\begin{align}\label{BDF-FEM-Lapalcian-Error2}
\left\{
\begin{aligned}
&\frac{1}{\tau} \sum_{j=0}^k \delta_j v_h^{n-j}
- A_h v_h^n = g_h^n , && n \geq k ,\\[5pt]
&v_h^n=0 &&n=0,\dots,k-1,
\end{aligned}
\right.
\end{align}
with 
$$
g_h^n=
\left\{\begin{aligned}
&-\frac{1}{\tau} \sum_{j=n-k}^{k-1} \delta_{n-j} A_h^{-1} (P_hu^{j}-u_h^{j})
&&\mbox{for}\,\,\,
k\le n\le 2k-1 , \\
&0&&\mbox{for}\,\,\, n\ge 2k .
\end{aligned}\right.
$$ 
Without loss of generality, we can also set $v_h^n=0$ for $n\ge N+1$ without affecting equation \eqref{BDF-FEM-Lapalcian-Error2} and $g_h^n$ for $k\le n\le N$. In the following, we derive expression of $v_h^n$ for $k\le n\le N$. But the expression turns out to be independent of $N$ and therefore holds for all $n\ge k$. 

We denote by 
$$v(\zeta)=\sum_{n=k}^\infty v_h^n\zeta^n\quad \forall\,\zeta\in\C ,$$ 
the generating function of the sequence $(v_h^n)_{n=k}^\infty$. The series is well defined as an analytic function on the complex plane, because there are only a finite number of $v_h^n$ that are not zero (after modifying $v_h^n$ to be zero for $n\ge N+1$). 
Multiplying \eqref{BDF-FEM-Lapalcian-Error2} by $\zeta^n$ and summing up the results for $n=k,k+1,\dots$, we obtain
\begin{align*}
(\tau^{-1}\delta(\zeta)-A_h) v(\zeta) =   \sum_{m=k}^{2k-1}g_h^m\zeta^m ,
\end{align*}
which implies
\begin{align}
v(\zeta)  = (\tau^{-1}\delta(\zeta)-A_h)^{-1}   \sum_{m=k}^{2k-1}g_h^m\zeta^m .
\end{align}

It is known that, by choosing $\kappa\in(\frac{\pi}{2},\pi)$ sufficiently close to $\frac{\pi}{2}$ (independent of $\tau$, cf. \cite[Lemma B.1]{Jin-Li-Zhou-2017-BDF}), the polynomial $\delta(\zeta)$ associated to the $k$-step BDF method satisfies 
\begin{align}\label{angle-delta-z}
\begin{aligned}
&\delta(e^{z\tau})\in \varSigma_{\pi-\frac12\alpha_k} 
&&\forall\, z\in \varSigma_{\kappa}^\tau =\{z\in \varSigma_{\kappa}: |{\rm Im}(z)|\le \pi/\tau\} ,  \\
&C^{-1}|z|\leqslant | \tau^{-1} \delta(e^{z\tau})|\leqslant C|z| &&\forall\, z\in -\lambda_0+\varSigma_{\kappa}^\tau,\\
&|z-\tau^{-1}\delta(e^{z\tau})|\leqslant C|z|^2\tau
&&\forall\, z\in -\lambda_0+\varSigma_{\kappa}^\tau , 
\end{aligned}
\end{align}
for some sufficiently small constant $\lambda_0$; see the footnote.\blfootnote{In \cite[Lemma B.1]{Jin-Li-Zhou-2017-BDF} these estimates were all proved only for $z\in \varSigma_{\kappa_*}^\tau$  instead of $z\in -\lambda_0+\varSigma_{\kappa_*}^\tau$, for all angles $\kappa_*$ sufficiently close to $\frac{\pi}{2}$. Nevertheless, by using Taylor's expansion one can see that these estimates are still correct when $|z|$ is sufficiently small (smaller than some constant independent of $\tau$). Hence, these estimates also holds for $z\in -\lambda_0+\varSigma_{\kappa}^\tau$ with a sufficiently small constant $\lambda_0$ and an angle $\kappa$ slightly closer to $\frac{\pi}{2}$ than $\kappa_*$.} 
Theorem \ref{Resolvent-Laplacian} and \eqref{angle-delta-z}  imply that $(\tau^{-1}\delta(\zeta)-A_h)^{-1}$ is analytic and satisfies the following estimate for $z\in \varSigma_{\kappa}^\tau$:
\begin{align*}
&\|(\tau^{-1}\delta(e^{z\tau})-A_h)^{-1}\|_{L^q\rightarrow L^q}
\le C|z|^{-1} , 
\quad
\|A_h(\tau^{-1}\delta(e^{z\tau})-A_h)^{-1}\|_{L^q\rightarrow L^q}
\le C .
\end{align*}
Since the largest eigenvalue of $A_h$ is strictly negative and bounded away from zero (with an upper bound independent of $h$), it follows that in a small neighborhood of $z=0$ the operator $(z-A_h)^{-1}$ is bounded analytic. As a result, the resolvent estimates above can be slightly improved as follows (for sufficiently small constant $\lambda_0$):
\begin{align}\label{resolvent-delta}
\begin{aligned}
&\|(\tau^{-1}\delta(e^{z\tau})-A_h)^{-1}\|_{L^q\rightarrow L^q}
\le 
C(1+|z|)^{-1} \le C|z+\lambda_0|^{-1} &&\forall\,z\in-\lambda_0 + \varSigma_{\kappa}^\tau,  \\[3pt]
&
\|A_h(\tau^{-1}\delta(e^{z\tau})-A_h)^{-1}\|_{L^q\rightarrow L^q}
\le C &&\forall\,z\in-\lambda_0 + \varSigma_{\kappa}^\tau .
\end{aligned}
\end{align}

By the Cauchy integral formula, we have 
\begin{align}\label{expr-vh-1}
v_h^n  
&=  \frac{1}{2\pi i}\int_{|\zeta|=1} 
(\tau^{-1}\delta(\zeta)-A_h)^{-1} \sum_{m=k}^{2k-1}  g_h^m\zeta^m \frac{\d\zeta}{\zeta^{n+1}} \notag \\
&= \frac{\tau}{2\pi i}\int_{|{\rm Im}(z)|\le \frac{\pi}{\tau}} 
(\tau^{-1}\delta(e^{-\tau z})-A_h)^{-1}  \sum_{m=k}^{2k-1}  g_h^me^{-t_m z} e^{t_nz}\d z ,
&&\mbox{(change of variable $\zeta=e^{-\tau z}$)} .
\notag \\
&= \sum_{m=k}^{2k-1} \frac{\tau}{2\pi i}\int_{\varGamma^\tau_{\kappa,n}} 
(\tau^{-1}\delta(e^{-\tau z})-A_h)^{-1}    g_h^m e^{t_{n-m} z}\d z  && \mbox{for}\,\,\, n\ge k ,
\end{align}
where we have deformed the integration contour to 
\begin{align}\label{contour-Gamma}
&\varGamma^\tau_{\kappa,n} = \varGamma^{\tau,1}_{\kappa,n} \cup \varGamma^{\tau,2}_{\kappa,n}  \\
&\mbox{with}\,\,\,
\varGamma^{\tau,1}_{\kappa,n}=\{z\in\C: \arg(z+\lambda_0) = \pm \kappa ,\,\,\,|z+\lambda_0|\ge t_{n+1}^{-1} , \,\,\, |{\rm Im}(z)| \le \pi/\tau \} \notag \\
\quad
&\mbox{and}\,\,\, 
\varGamma^{\tau,2}_{\kappa,n}=\{z\in\C: |\arg(z+\lambda_0)| \le \kappa ,\,\,\,|z+\lambda_0| = t_{n+1}^{-1} \} . \notag 
\end{align}
The deformation of the integration contour in \eqref{operator:M} is legal due to the analyticity of the integrand for $z\in -\lambda_0+\Sigma_\kappa^\tau$ and the periodicity in the imaginary part of $z$. We define the operator 
\begin{align}\label{operator:M}
M_{n-m}
&
=\frac{1}{2\pi i}\int_{\Gamma_{\kappa,k+n-m}^\tau} 
(\delta(e^{-\tau z})-A_h)^{-1} e^{t_{n-m}z} \d z \notag \\
&
=\frac{1}{2\pi i}\int_{\Gamma_{\kappa,n}^\tau} 
(\delta(e^{-\tau z})-A_h)^{-1} e^{t_{n-m}z} \d z .
&& \mbox{(contour is deformed)}
\end{align}
Then from \eqref{expr-vh-1} we obtain
\begin{align*}
v_h^n  
&=\sum_{m=k}^{2k-1}  \sum_{j=m-k}^{k-1}M_{n-m}\delta_{m-j} A_h^{-1} (P_hu^{j}-u_h^{j}) \\
&= \sum_{j=0}^{k-1} \sum_{m=k}^{j+k}  M_{n-m}\delta_{m-j}  A_h^{-1} (P_hu^{j}-u_h^{j}) \qquad\mbox{for}\,\,\, n\ge k .
\end{align*}
Therefore, the operator $E_h^{n,j}$ in \eqref{Repr-Forml} is given by 
\begin{align}\label{expr-Eh-nj}
E_h^{n,j}
=\sum_{m=k}^{j+k}  M_{n-m}\delta_{m-j} . 
\end{align}
By using \eqref{resolvent-delta} and the property $\tau |z|\le C$ on $\varGamma^\tau_{\kappa,n}$ with $k\le m\le 2k-1$, from \eqref{operator:M} we derive that 
\begin{align}\label{contour-estimate-1}
&e^{\lambda_0t_{n+1}} \|M_{n-m} \phi_h\|_{L^q} \notag \\
&\le 
C \sup_{z\in\varGamma^\tau_{\kappa,n}} e^{- (m+1) \tau {\rm Re}(z)}  \int_{\varGamma^\tau_{\kappa,n}} 
\| ( \delta(e^{-\tau z})-A_h)^{-1} \|_{L^q\rightarrow L^q} 
e^{t_{n+1}{\rm Re}(z+\lambda_0)}  \|\phi_h\|_{L^q} |\d z| \notag \\
&\le 
C\|\phi_h\|_{L^q}  
\bigg(\int_{\varGamma^{\tau,1}_{\kappa,n}}  
|z+\lambda_0|^{-1}  
e^{-Ct_{n+1} |z+\lambda_0|} |\d (z+\lambda_0) | 
+\int_{\varGamma^{\tau,2}_{\kappa,n}}  
|z+\lambda_0|^{-1}  
e^{t_{n+1} |z+\lambda_0|} |\d (z+\lambda_0) |\bigg) \notag \\
&\le 
C\|\phi_h\|_{L^q}  
\bigg( \int_{t_{n+1}^{-1}}^{\frac{\pi }{\tau \sin(\theta)} } 
r^{-1}  
e^{-Ct_{n+1} r}\d r + \int_{-\kappa}^{\kappa}
t_{n+1} 
e^C t_{n+1}^{-1} \d\varphi \bigg) \notag \\
&\le C\|\phi_h\|_{L^q}  \qquad\,\mbox{for}\,\,\, k\le m\le 2k-1\,\,\,\mbox{and}\,\,\, n\ge k .
\end{align}
Similarly, we have 
\begin{align}
&e^{\lambda_0t_{n+1}} \|A_hM_{n-m} \phi_h\|_{L^q} \notag \\
&\le 
C \sup_{z\in\varGamma^\tau_{\kappa,n}} e^{- (m+1) \tau {\rm Re}(z)}  \int_{\varGamma^\tau_{\kappa,n}} 
\| ( \delta(e^{-\tau z})-A_h)^{-1} \|_{L^q\rightarrow L^q} 
e^{t_{n+1}{\rm Re}(z+\lambda_0)} \|A_h\phi_h\|_{L^q}  \d z \notag \\
&\le
C\|A_h\phi_h\|_{L^q}  \bigg(\int_{\varGamma^{\tau,1}_{\kappa,n}} 
 |z+\lambda_0|^{-1}  
e^{-C t_{n+1} |z+\lambda_0|} |\d (z+\lambda_0) | 
+\int_{\varGamma^{\tau,2}_{\kappa,n}} 
 |z+\lambda_0|^{-1}  
e^{t_{n+1} |z+\lambda_0|} |\d (z+\lambda_0) | \bigg)
\notag \\
&\le C\|A_h\phi_h\|_{L^q} \quad\mbox{for}\,\,\, k\le m\le 2k-1\,\,\,\mbox{and}\,\,\, n\ge k,
\end{align}
and 
\begin{align}\label{contour-estimate-3}
&e^{\lambda_0t_{n+1}}\|A_hM_{n-m} \phi_h\|_{L^q} \notag \\
&\le 
C \sup_{z\in\varGamma^\tau_{\kappa,n}} e^{- (m+1) \tau {\rm Re}(z)}  \int_{\varGamma^\tau_{\kappa,n}} 
\| A_h ( \delta(e^{-\tau z})-A_h)^{-1} \|_{L^q\rightarrow L^q} 
e^{t_{n+1}{\rm Re}(z+\lambda_0)}  \|\phi_h\|_{L^q} \d z \notag \\
&\le
C\|\phi_h\|_{L^q} \bigg(\int_{\varGamma^{\tau,1}_{\kappa,n}} 
e^{-C t_{n+1} |(z+\lambda_0)|} |\d (z+\lambda_0) | 
+\int_{\varGamma^{\tau,2}_{\kappa,n}} 
e^{t_{n+1} |(z+\lambda_0)|} |\d (z+\lambda_0) |\bigg)\notag \\
&\le
C \|\phi_h\|_{L^q} 
\bigg( \int_{t_{n+1}^{-1}}^\infty e^{-Ct_{n+1} r} \d r
+\int_{-\kappa}^{\kappa} e^{C} t_{n+1}^{-1} \d \varphi \bigg)  \notag \\
&\le C t_{n+1}^{-1} \|\phi_h\|_{L^q} \quad\mbox{for}\,\,\, k\le m\le 2k-1\,\,\,\mbox{and}\,\,\, n\ge k.
\end{align}
Substituting \eqref{contour-estimate-1}--\eqref{contour-estimate-3} into \eqref{expr-Eh-nj}, we obtain the desired estimates in \eqref{beta-bound}. 
\end{proof}
\medskip

\begin{remark}
The shift of the region from $\Sigma_\kappa^\tau$ to $-\lambda_0+\Sigma_\kappa^\tau$ is to have the exponential factor $e^{-\lambda_0 t_{n+1} }$ in the estimates of \eqref{beta-bound}. This exponential factor plays a role in establishing the $\ell^\infty(L^q)$ error estimate in the following theorem. 
\end{remark}

\begin{theorem}\label{THM:Linfty-stability}
{\it 
If $f_h^n=P_hf^n$, then the fully discrete solution given by \eqref{BDF-FEM} and the semidiscrete solution given by \eqref{BDF-Laplacian-1} satisfy the following error estimate for $1\le q\le \infty$: 
\begin{align}\label{Linfty-stability}
\max_{k\le n\le N} \|P_hu^n-u_h^n\|_{L^q} 
\le C\ell_{N} \max_{k\le n\le N} \|P_h u^n - R_hu^n\|_{L^q} 
+ C\max_{0\le n\le k-1} \big\|P_h u^n - u_h^n \big\|_{L^q} ,
\end{align}
where $\ell_{N}=  \ln (1+N) $ and the constant $C$ is independent of $h$, $\tau$ and $N$.
}
\end{theorem}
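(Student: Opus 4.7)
The plan is to mimic the decomposition from the proof of Theorem \ref{THM:LpLq-Laplacian}, writing $A_h^{-1}(P_hu^n-u_h^n)=w_h^n+v_h^n$ with $w_h^n$ and $v_h^n$ solving \eqref{BDF-FEM-whn-j-Error1} and \eqref{BDF-FEM-Lapalcian-Error1}, and to bound $\|A_hw_h^n\|_{L^q}$ and $\|A_hv_h^n\|_{L^q}$ directly in the $\ell^\infty$ norm rather than the $\ell^p$ norm. The $v_h$-term is immediate from Lemma \ref{Lemma:Eh}: applying \eqref{Repr-Forml} with $\phi_h=A_h^{-1}(P_hu^j-u_h^j)$ and the \emph{second} estimate in \eqref{beta-bound} gives
\[
\|A_hv_h^n\|_{L^q}\le Ce^{-\lambda_0 t_{n+1}}\max_{0\le j\le k-1}\|P_hu^j-u_h^j\|_{L^q},
\]
uniformly in $n\ge k$ and in $1\le q\le\infty$, which supplies the second term on the right-hand side of \eqref{Linfty-stability} with no logarithmic factor.

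The heart of the argument is the $\ell^\infty(L^q)$ bound on $A_hw_h^n$. I would derive a discrete Duhamel representation
\[
w_h^n=\tau\sum_{m=k}^n G^{n-m}(R_hu^m-P_hu^m),\qquad n\ge k,
\]
by exactly the generating-function/contour-substitution method of \eqref{expr-vh-1}--\eqref{operator:M}: take the generating function of $(w_h^n)$, invert $\tau^{-1}\delta(\zeta)-A_h$ via the resolvent, and substitute $\zeta=e^{-\tau z}$. The resulting kernel $G^j$ is a contour integral of $(\tau^{-1}\delta(e^{-\tau z})-A_h)^{-1}e^{t_j z}$ along the shifted keyhole contour of \eqref{contour-Gamma} with inner-arc radius $t_{j+1}^{-1}$. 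Repeating the contour estimate \eqref{contour-estimate-3} verbatim with this exponential shift, and using the uniform resolvent bound $\|A_h(\tau^{-1}\delta(e^{-\tau z})-A_h)^{-1}\|_{L^q\to L^q}\le C$ from \eqref{resolvent-delta} (itself a consequence of Theorem \ref{Resolvent-Laplacian}, which is valid for $1\le q\le\infty$), I expect the discrete heat-kernel bound
\[
\|A_hG^{j}\|_{L^q\to L^q}\le \frac{Ce^{-\lambda_0 t_{j+1}}}{t_{j+1}},\qquad 1\le q\le\infty,
\]
uniformly in $h$, $\tau$ and $j$.

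Summing this kernel bound and using $\sum_{j=1}^n 1/j\le 1+\ln n\le C\ln(1+N)$ gives
\[
\|A_hw_h^n\|_{L^q}\le C\max_{k\le m\le N}\|R_hu^m-P_hu^m\|_{L^q}\sum_{j=1}^{n-k+1}\frac{e^{-\lambda_0 j\tau}}{j}\le C\ell_N\max_{k\le m\le N}\|P_hu^m-R_hu^m\|_{L^q},
\]
which combined with the $v_h$ bound and $P_hu^n-u_h^n=A_h(w_h^n+v_h^n)$ yields \eqref{Linfty-stability}. The main technical obstacle is the kernel estimate $\|A_hG^j\|_{L^q\to L^q}\le Ce^{-\lambda_0 t_{j+1}}/t_{j+1}$ uniformly for $1\le q\le\infty$, but this is structurally identical to \eqref{contour-estimate-3}; the crucial inputs are the $L^\infty$-uniform resolvent bounds for $A_h$ delivered by Theorem \ref{Resolvent-Laplacian} together with the shift by $\lambda_0$ built into the contour, which produces the exponential decay. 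The logarithmic factor $\ell_N$ is unavoidable at this level of generality and arises only at the final summation step from the diagonal singularity $1/t_{j+1}$ of the discrete heat kernel, in complete analogy with the classical maximum-norm estimate \eqref{maximum-norm}.
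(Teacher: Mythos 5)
Your proposal is correct and follows essentially the same route as the paper: the same decomposition $A_h^{-1}(P_hu^n-u_h^n)=w_h^n+v_h^n$, the same use of the second estimate in \eqref{beta-bound} for the $v_h$-part, and the same discrete Duhamel representation $w_h^n=\tau\sum_{j=k}^n E_h^{n-j}(R_hu^j-P_hu^j)$ with kernel bounds $\|A_hE_h^j\|_{L^q\to L^q}\le Ce^{-\lambda_0 t_{j+1}}t_{j+1}^{-1}$ obtained by the shifted-contour argument (your $G^j$ is the paper's $E_h^j$ from Lemma \ref{Lemma:Ehn-Linfty}). The final summation producing the factor $\ell_N$ is likewise identical to \eqref{log-tau}.
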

\begin{proof}
Again, we use the error equation \eqref{BDF-FEM-Lapalcian-Error} and the decomposition \eqref{d-wh-vh}. Then, substituting the second estimate of \eqref{beta-bound} into the expression \eqref{Repr-Forml}, we obtain 
\begin{align}\label{Ah-vh-n-Linfty}
&\sup_{k\le n\le N } \| A_hv_h^n \|_{L^q} 
\le C\max_{0\le n\le k-1} \| P_hu^n-u_h^n \|_{L^q} .
\end{align}
Similarly, we can express the solution of \eqref{BDF-FEM-whn-j-Error1} as 
\begin{align}\label{BDF-expr-w}
w_h^n = \tau \sum_{j=k}^n E_h^{n-j} ( R_hu^j-P_hu^j ) ,
\end{align}
with some operators $E_h^n$ satisfying the following estimate for $1\le q\le \infty$ (for some positive constant $\lambda_0$; see Lemma \ref{Lemma:Ehn-Linfty}):
\begin{align}\label{Estimate-Ehn}
\begin{aligned}
&\|E_h^n \phi_h\|_{L^q}
\le
Ce^{-\lambda_0 t_{n+1}} \|\phi_h\|_{L^q}  &&
\forall\,
\phi_h\in S_h, \\
&\|A_h E_h^n \phi_h\|_{L^q}
\le
Ce^{-\lambda_0 t_{n+1}} t_{n+1}^{-1} \|\phi_h\|_{L^q} 
&&
\forall\,
\phi_h\in S_h. 
\end{aligned}
\end{align} 
Substituting the second estimate of \eqref{Estimate-Ehn} into \eqref{BDF-expr-w}, we obtain 
\begin{align}\label{log-tau}
\|A_hw_h^n\|_{L^q}
&\le
\tau \sum_{j=k}^n Ce^{-\lambda_0 t_{n+1-j}} t_{n+1-j}^{-1} \max_{k\le j\le n}\| R_hu^j - P_hu^j \|_{L^q} \notag \\
&\le
C \ln (1+n)  \max_{k\le j\le n} \| R_hu^j - P_hu^j \|_{L^q} .
\end{align} 

It remains to prove the estimates in \eqref{Estimate-Ehn}. This is presented in the following lemma. 
\end{proof}

\begin{lemma}\label{Lemma:Ehn-Linfty}
{\it 
The solution of \eqref{BDF-FEM-whn-j-Error1} can be expressed as \eqref{BDF-expr-w}, where the operators $E_h^n$ satisfy the estimates in \eqref{Estimate-Ehn}  for $1\le q\le \infty$. 
}
\end{lemma}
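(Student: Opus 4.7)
\medskip

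\textbf{Proof proposal.} The plan is to mirror the generating-function / Cauchy-integral argument used in Lemma~\ref{Lemma:Eh}, now applied to the inhomogeneous problem \eqref{BDF-FEM-whn-j-Error1}, so as to read off the convolution kernel $E_h^{n-j}$ explicitly. Extending the sequence $w_h^n$ by zero for $n\ge N+1$ if needed and writing $g_h^n:=R_hu^n-P_hu^n$ for $n\ge k$, I introduce the generating functions
\begin{align*}
w(\zeta)=\sum_{n\ge k} w_h^n\,\zeta^n,
\qquad
g(\zeta)=\sum_{n\ge k} g_h^n\,\zeta^n .
\end{align*}
Multiplying \eqref{BDF-FEM-whn-j-Error1} by $\zeta^n$ and summing over $n\ge k$ gives $(\tau^{-1}\delta(\zeta)-A_h)\,w(\zeta)=g(\zeta)$, hence $w(\zeta)=(\tau^{-1}\delta(\zeta)-A_h)^{-1}g(\zeta)$. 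Expanding $K(\zeta):=(\tau^{-1}\delta(\zeta)-A_h)^{-1}=\sum_{m\ge 0}K_m\zeta^m$ around $\zeta=0$ and comparing coefficients produces $w_h^n=\sum_{j=k}^n K_{n-j}\,g_h^j$, so setting $E_h^{m}:=\tau^{-1}K_{m}$ yields exactly the representation \eqref{BDF-expr-w}.

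Next I extract a useful integral representation for $E_h^{m}$. By the Cauchy integral formula on $|\zeta|=1$ followed by the change of variable $\zeta=e^{-\tau z}$ (which maps the unit circle to the vertical segment $\{z=i\omega:|\omega|\le\pi/\tau\}$ in one period),
\begin{align*}
E_h^{m}\phi_h
=\frac{1}{2\pi i}\int_{|\mathrm{Im}(z)|\le\pi/\tau}
(\tau^{-1}\delta(e^{-\tau z})-A_h)^{-1}\phi_h\,e^{t_{m}z}\,\d z .
\end{align*}
By \eqref{angle-delta-z} and \eqref{resolvent-delta} the integrand is analytic on the shifted strip-sector $-\lambda_0+\Sigma_\kappa^\tau$, and the $2\pi i/\tau$-periodicity in the imaginary direction cancels the contributions of the horizontal closing segments at $\mathrm{Im}(z)=\pm\pi/\tau$. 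I can therefore deform the contour to the curve $\Gamma^\tau_{\kappa,m}=\Gamma^{\tau,1}_{\kappa,m}\cup\Gamma^{\tau,2}_{\kappa,m}$ defined exactly as in \eqref{contour-Gamma}, obtaining
\begin{align*}
E_h^{m}\phi_h
=\frac{1}{2\pi i}\int_{\Gamma^\tau_{\kappa,m}}
(\tau^{-1}\delta(e^{-\tau z})-A_h)^{-1}\phi_h\,e^{t_{m}z}\,\d z .
\end{align*}

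With this representation in hand, the two bounds in \eqref{Estimate-Ehn} follow by the same contour-length computation used in \eqref{contour-estimate-1}--\eqref{contour-estimate-3}. Writing $z=-\lambda_0+\widetilde z$ pulls out the exponential factor $e^{-\lambda_0 t_{m+1}}$. For the $L^q\to L^q$ bound of $E_h^m$, the first estimate in \eqref{resolvent-delta} contributes $|z+\lambda_0|^{-1}$, and the integrals $\int_{t_{m+1}^{-1}}^{\pi/(\tau\sin\kappa)}r^{-1}e^{-Ct_{m+1}r}\,\d r$ on the rays plus the arc of radius $t_{m+1}^{-1}$ are both bounded by an absolute constant. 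For the $A_h$-weighted bound, the second estimate in \eqref{resolvent-delta} replaces $|z+\lambda_0|^{-1}$ by $1$, and the corresponding integrals evaluate to $C t_{m+1}^{-1}$. Together these give precisely the two inequalities in \eqref{Estimate-Ehn}.

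The main obstacle is not the kernel estimates themselves, which are essentially copies of \eqref{contour-estimate-1}--\eqref{contour-estimate-3}, but justifying the deformation of the contour from the vertical segment $\{\mathrm{Re}(z)=0,|\mathrm{Im}(z)|\le\pi/\tau\}$ to $\Gamma^\tau_{\kappa,m}$ inside the shifted region $-\lambda_0+\Sigma_\kappa^\tau$. This needs (i) analyticity of $(\tau^{-1}\delta(e^{-\tau z})-A_h)^{-1}$ on that region, which is supplied by \eqref{angle-delta-z} together with Theorem~\ref{Resolvent-Laplacian}, and (ii) cancellation of the horizontal segments of the deformed contour via the $2\pi i/\tau$-periodicity of $z\mapsto\delta(e^{-\tau z})$. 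Once these two points are in place, the rest is bookkeeping identical to Lemma~\ref{Lemma:Eh}.
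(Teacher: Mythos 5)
Your proposal is correct and follows essentially the same route as the paper: the generating-function identity $w(\zeta)=(\tau^{-1}\delta(\zeta)-A_h)^{-1}g(\zeta)$, extraction of the convolution kernel via the Cauchy integral formula with $\zeta=e^{-\tau z}$, deformation to the contour $\varGamma^\tau_{\kappa,n}$ using the analyticity supplied by \eqref{angle-delta-z} and \eqref{resolvent-delta} together with periodicity in the imaginary direction, and the same contour-length estimates as in \eqref{contour-estimate-1}--\eqref{contour-estimate-3}. The only cosmetic difference is that you identify $E_h^m$ by comparing Taylor coefficients of the kernel rather than by inserting the finite sum for $g(\zeta)$ directly, which changes nothing of substance.
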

\begin{proof}
Similarly as the proof of Lemma \ref{Lemma:Eh}, we multiply \eqref{BDF-FEM-whn-j-Error1} by $\zeta^n$ and sum up the results for all $n\ge k$. This yields the following expression of the generating function $w(\zeta)=\sum_{n=k}^\infty w_h^n\zeta^n$: 
\begin{align}
w(\zeta) = (\tau^{-1}\delta(\zeta)-A_h)^{-1}   \sum_{n=k}^{\infty} (R_hu^n - P_hu^n) \zeta^n .
\end{align}
Then, by using Cauchy's integral formula, we can derive the following expression similarly as \eqref{expr-vh-1}:
\begin{align}\label{expr-wh-1}
w_h^n  
&=\sum_{m=k}^{n} \frac{\tau}{2\pi i}\int_{\varGamma^\tau_{\kappa,{n-m}}} 
(\tau^{-1}\delta(e^{-\tau z})-A_h)^{-1}   (R_hu^m - P_hu^m) e^{t_{n-m} z}\d z  && \mbox{for}\,\,\, n\ge k ,
\end{align}
where the integration contour $\varGamma^\tau_{\kappa,n}$ is defined in \eqref{contour-Gamma}, and the summation is from $m=k$ to $m=n$ because for $m\ge n+1$ the integral above become zero (the solution $w_h^n$ only depends on the right-hand side for $k\le m\le n$). Hence, the operator in \eqref{BDF-expr-w} is given by
\begin{align}\label{expr-wh-1}
E_h^{n}   
&= \frac{1}{2\pi i}\int_{\varGamma^\tau_{\kappa,n}} 
(\tau^{-1}\delta(e^{-\tau z})-A_h)^{-1} e^{t_{n} z} \d z  && \mbox{for}\,\,\, n\ge k .
\end{align}
By using the resolvent estimates in \eqref{resolvent-delta} and the similar estimation in \eqref{contour-estimate-1}--\eqref{contour-estimate-3}, we have 
\begin{align*}
&e^{\lambda_0t_{n+1}} \| E_h^{n} \phi_h \|_{L^q} \\
&\le 
C \sup_{z\in\varGamma_{\kappa,n}^\tau} e^{- \tau {\rm Re}(z)}  \int_{\varGamma_{\kappa,n}^\tau} 
\| ( \delta(e^{-\tau z})-A_h)^{-1} \|_{L^q\rightarrow L^q} 
e^{t_{n+1}{\rm Re}(z+\lambda_0)}  \|\phi_h\|_{L^q} \d z \\
&\le
C\|\phi_h\|_{L^q}  
\bigg(\int_{\varGamma_{\kappa,n}^{\tau,1}} 
 |z+\lambda_0|^{-1}  
e^{-Ct_{n+1} |z+\lambda_0|} |\d (z+\lambda_0) |
+\int_{\varGamma_{\kappa,n}^{\tau,2}} 
 |z+\lambda_0|^{-1}  
e^{t_{n+1} |z+\lambda_0|} |\d (z+\lambda_0) |
\bigg) \\
&\le 
C\|\phi_h\|_{L^q}  
\bigg( \int_{t_{n+1}^{-1}}^{\infty} 
r^{-1}  
e^{-Ct_{n+1} r}\d r + \int_{-\kappa}^\kappa  
e^{C} \d\varphi \bigg) \notag \\
&\le C\|\phi_h\|_{L^q}  \qquad\,\mbox{for}\,\,\, n\ge k,
\end{align*}
and
\begin{align*}
e^{\lambda_0t_{n+1}}\|A_hE_h^{n} \phi_h\|_{L^q}
&\le 
C \sup_{z\in \varGamma_{\kappa,n}^\tau} e^{- \tau {\rm Re}(z)}  \int_{\varGamma_{\kappa,n}^\tau} 
\|A_h  ( \delta(e^{-\tau z})-A_h)^{-1} \|_{L^q\rightarrow L^q} 
e^{t_{n+1}{\rm Re}(z+\lambda_0)} \|\phi_h\|_{L^q}  \d z \\
&\le
C\|A_h\phi_h\|_{L^q}  
\bigg( \int_{\varGamma_{\kappa,n}^{\tau,1}} 
e^{-Ct_{n+1} |z+\lambda_0|} |\d (z+\lambda_0) |
+\int_{\varGamma_{\kappa,n}^{\tau,2}} 
e^{t_{n+1} |z+\lambda_0|} |\d (z+\lambda_0) |
\bigg) \\
&\le 
C\|\phi_h\|_{L^q}  
\bigg( \int_{t_{n+1}^{-1}}^{\infty} 
r^{-1}  
e^{-Ct_{n+1} r}\d r + \int_{-\kappa}^\kappa  
t_{n+1}^{-1} e^{C} \d\varphi \bigg) \notag \\
&\le Ct_{n+1}^{-1} \|\phi_h\|_{L^q} \quad\mbox{for}\,\,\, n\ge k .
\end{align*}
This proves the desired estimates in \eqref{Estimate-Ehn}. 
\end{proof}

\subsection{$\ell^p(W^{1,q})$ estimate for fully discrete FEMs}

In this subsection, we prove the following result by using Theorems \ref{THM:BDF-Laplacian} and \ref{THM:LpLq-Laplacian}. 

\begin{theorem}\label{THM:LpW1q-Laplacian}
{\it 
In a convex polygon or polyhedron, the solution of \eqref{BDF-FEM} with zero starting values $u_h^n=0$, $n=0,\dots,k-1$, satisfies the following estimate: 
    \begin{equation}\label{LpW1q-Laplacian}
        \big\|(d_\tau u_h^n )_{n=k}^N\big\|_{\ell^p(W^{-1,q})}   
        +\big\|(u_h^n )_{n=k}^N\big\|_{\ell^p(W^{1,q})}   
        \le C_{p,q}\big\|(f_h^n )_{n=k}^N\big\|_{\ell^p(W^{-1,q})}  ,
    \end{equation}
where the constant $C_{p,q}$ is independent of $h$ and $\tau$.
}
\end{theorem}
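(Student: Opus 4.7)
The plan is to reduce the fully discrete $\ell^p(W^{1,q})$ bound to the semidiscrete one (Theorem~\ref{THM:BDF-Laplacian}(2)) combined with the $\ell^p(L^q)$ error bound (Theorem~\ref{THM:LpLq-Laplacian}), upgrading the $L^q$ error estimate to a $W^{1,q}$ estimate via an inverse inequality on the finite element space.

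First I would regard each $f_h^n\in S_h$ as an element of $W^{-1,q}$ and let $u^n$ denote the solution of the continuous-in-space semidiscrete BDF scheme $\dot u^n=Au^n+f_h^n$ with zero starting values. In a convex polygon or polyhedron, Theorem~\ref{THM:BDF-Laplacian}(2) yields
\[
\|(d_\tau u^n)_{n=k}^N\|_{\ell^p(W^{-1,q})}+\|(u^n)_{n=k}^N\|_{\ell^p(W^{1,q})}\le C_{p,q}\|(f_h^n)_{n=k}^N\|_{\ell^p(W^{-1,q})}.
\]
Because $P_hf_h^n=f_h^n$, Theorem~\ref{THM:LpLq-Laplacian} applied with $f^n:=f_h^n$ gives
\[
\|(P_hu^n-u_h^n)_{n=k}^N\|_{\ell^p(L^q)}\le C\|(P_hu^n-R_hu^n)_{n=k}^N\|_{\ell^p(L^q)}\le Ch\,\|(u^n)_{n=k}^N\|_{\ell^p(W^{1,q})},
\]
where the second inequality uses the $O(h)$ $L^q$-approximation of $P_h$ and $R_h$, valid in the convex case. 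Combining with the inverse estimate $\|\eta_h\|_{W^{1,q}}\le Ch^{-1}\|\eta_h\|_{L^q}$ on quasi-uniform $S_h$ produces $\|(P_hu^n-u_h^n)_{n=k}^N\|_{\ell^p(W^{1,q})}\le C\|(u^n)_{n=k}^N\|_{\ell^p(W^{1,q})}$. Invoking the $W^{1,q}$-stability of $P_h$ on quasi-uniform meshes together with the triangle inequality then delivers $\|(u_h^n)_{n=k}^N\|_{\ell^p(W^{1,q})}\le C_{p,q}\|(f_h^n)_{n=k}^N\|_{\ell^p(W^{-1,q})}$.

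For the $d_\tau u_h^n$ bound I would split $d_\tau u_h^n=P_hd_\tau u^n-d_\tau\eta_h^n$ with $\eta_h^n:=P_hu^n-u_h^n$. The first piece is controlled by the $W^{-1,q}$-stability of $P_h$ (dual to $W^{1,q'}$-stability) applied to the semidiscrete bound above. For the second piece, subtracting \eqref{BDF-FEM} from $P_h$ applied to \eqref{BDF-Laplacian-1} and using the identity $P_hA=A_hR_h$ yields the error equation
\[
\dot\eta_h^n-A_h\eta_h^n=A_h(R_hu^n-P_hu^n).
\]
Taking $W^{-1,q}$ norms and applying $\|A_h\phi_h\|_{W^{-1,q}}\le C\|\phi_h\|_{W^{1,q}}$ for $\phi_h\in S_h$ (derived from summation by parts together with $W^{1,q'}$-stability of $P_h$), the $W^{1,q}$-stability of $R_h$ and $P_h$, and the $W^{1,q}$-bound on $\eta_h^n$ obtained above, gives $\|\dot\eta_h^n\|_{W^{-1,q}}\le C\|\eta_h^n\|_{W^{1,q}}+C\|u^n\|_{W^{1,q}}$ pointwise in $n$. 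Summing in $\ell^p$ and using the norm equivalence \eqref{d_tau-equivlence}, valid in the UMD space $W^{-1,q}$ for $1<q<\infty$, converts $\dot{}$ to $d_\tau$ and completes the estimate.

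The only nontrivial step is the bootstrap in the first paragraph: the $O(h)$ $L^q$-approximation error $P_hu^n-R_hu^n$ exactly cancels the $O(h^{-1})$ loss of the inverse inequality, producing an $h$-independent bound. Once this observation is exploited, the remaining ingredients reduce to standard projection and inverse estimates on quasi-uniform meshes together with the two maximal regularity results already in hand.
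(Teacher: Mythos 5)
Your first paragraph reproduces Part~1 of the paper's proof almost verbatim (semidiscrete bound from Theorem~\ref{THM:BDF-Laplacian}(2), the error bound of Theorem~\ref{THM:LpLq-Laplacian}, the $O(h)$ Ritz-projection estimate cancelling the $O(h^{-1})$ inverse inequality, then $W^{1,q}$-stability of $P_h$), and your treatment of the $d_\tau u_h^n$ term is a workable variant of the paper's Part~3. However, there is a genuine gap: the bootstrap only works for $2\le q<\infty$, and you have no argument for $1<q<2$.

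The issue is the step ``the $O(h)$ $L^q$-approximation of $P_h$ and $R_h$, valid in the convex case.'' The estimate $\|P_hu^n-R_hu^n\|_{L^q}\le C_qh\|u^n\|_{W^{1,q}}$ is proved (see \eqref{Ritz-Laplacian-Lq} and Remark~\ref{Remark:Lq-Ritz}) by an Aubin--Nitsche duality argument that requires $W^{2,q'}$ elliptic regularity for the dual problem. In a convex polygon or polyhedron with coefficients only in $W^{1,d+\beta}$, Lemma~\ref{Lemma:W2q-aij} gives $W^{2,q'}$ regularity only for $q'<q_0$ with some $q_0>2$ that can be arbitrarily close to $2$ (depending on the largest interior angle), so the duality argument is guaranteed to work only when $q'\le 2$, i.e.\ $q\ge 2$. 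This is exactly why the paper states \eqref{Ritz-Laplacian-Lq} only for $2\le q<\infty$ and restricts its Part~1 to that range. For $1<q\le 2$ the paper runs a completely separate argument: it writes $\nabla u_h^n=(L_h\vec g)^n$ with $g^j=a\nabla A^{-1}f_h^j$, identifies the dual operator $L_h'$ on $\ell^{p'}(L^{q'})$ with $q'\ge 2$ as (a time-reversal of) the same solution operator, and deduces boundedness of $L_h$ on $\ell^p(L^q)$ by duality from the already-established case $q'\ge 2$. Your proposal contains no substitute for this duality step, so as written it does not establish \eqref{LpW1q-Laplacian} for $1<q<2$. If you add such a duality argument (or an independent proof of the $L^q$ Ritz-projection estimate for $q<2$, which would require more elliptic regularity than is available here), the rest of your outline goes through.
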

\begin{proof}
Part 1: 
For $2\le q<\infty$, the Ritz projection has the following approximation property 
in a convex polygon or polyhedron (see Remark \ref{Remark:Lq-Ritz}):
\begin{align}\label{Ritz-Laplacian-Lq}
\|P_hu^n-R_hu^n\|_{L^q}\le C_q h\|u^n\|_{W^{1,q}} \quad\forall\, 2\le q<\infty.
\end{align}
Hence, by choosing $f^n=f_h^n$ in \eqref{BDF-Laplacian-1}, Theorem \ref{THM:LpLq-Laplacian} implies that the solutions of \eqref{BDF-FEM} and \eqref{BDF-Laplacian-1} satisfy the following error estimate: 
\begin{align*}
\big\|(P_hu^n-u_h^n )_{n=k}^N\big\|_{\ell^p(L^q)} 
&\le
C_{p,q} \big\|(P_hu^n-R_hu^n)_{n=k}^N\big\|_{\ell^p(L^q)} \\
&\le C_{p,q} h \| (u^n)_{n=k}^N \|_{\ell^p(W^{1,q})} \\
&\le C_{p,q} h \| (f_h^n)_{n=k}^N \|_{\ell^p(W^{-1,q})}  ,
\end{align*}
where the second to last inequality is due to \eqref{Ritz-Laplacian-Lq}, and the last inequality is due to the second result in Theorem \ref{THM:BDF-Laplacian}. By using the inverse inequality, we have
\begin{align*}
\big\|(P_hu^n-u_h^n )_{n=k}^N\big\|_{\ell^p(W^{1,q})} 
\le
Ch^{-1}
\big\|(P_hu^n-u_h^n )_{n=k}^N\big\|_{\ell^p(L^q)} 
\le C_{p,q}  \| (f_h^n)_{n=k}^N \|_{\ell^p(W^{-1,q})} .
\end{align*}
Then, using the triangle inequality, we obtain 
\begin{align*}
\big\|(u_h^n )_{n=k}^N\big\|_{\ell^p(W^{1,q})} 
\le
\big\|(P_hu^n)_{n=k}^N\big\|_{\ell^p(W^{1,q})} 
+\big\|(P_hu^n-u_h^n )_{n=k}^N\big\|_{\ell^p(W^{1,q})} 
\le
C_{p,q}  \| (f_h^n)_{n=k}^N \|_{\ell^p(W^{-1,q})} .
\end{align*}
The last inequality uses the $W^{1,q}$ stability of $P_h$ and the second result in Theorem \ref{THM:BDF-Laplacian}. 

Part 2: 
For $1<q\leq 2$, we express the solution of \eqref{BDF-FEM} as (see Lemma \ref{Lemma:Ehn-Linfty}) 
\begin{align*} 
u_h^n = \tau \sum_{j=k}^n E_h^{n-j} f_h^j  ,
\end{align*}
with $E_h^n$ is given by \eqref{expr-wh-1}. By considering the gradient of the equality above, we have 
\begin{align}\label{Lh-expr}
\nabla u_h^n =
( L_h \vec g )^n := \tau \sum_{j=k}^n \nabla E_h^{n-j} P_h \nabla\cdot g^j ,
\quad n=k,\dots,N,
\end{align}
where $g^j=a \nabla  A^{-1}f_h^j$ and $\vec g=(g^j)_{j=k}^N$, with $a=(a_{ij})$ denoting the $d\times d$ matrix of the diffusion coefficients (thus $\nabla\cdot g^j =f_h^j$). It suffices to prove that the operator $L_h$ is bounded on $\ell^p(L^q)$. To this end, we only need to prove the boundedness of its dual operator $L_h'$ on $\ell^{p'}(L^{q'})$. 
We define a discrete space-time inner product
$$
[\vec g,\vec\eta]_{\tau} = 
\tau \sum_{n=k}^N (g^n,\eta^n) \quad\mbox{for}\,\,\,
\vec g=(g^j)_{j=k}^N\,\,\,\mbox{and}\,\,\, \vec \eta=(\eta^j)_{j=k}^N .
$$
Then, using the definition of $L_h$ and integration by parts, we obtain $[L_h \vec g , \vec\eta]_{\tau} 
= [ \vec g , L_h' \vec\eta]_{\tau} $ with 
\begin{align*}
(L_h' \vec\eta)^j 
= \tau \sum_{n=j}^N \nabla E_h^{n-j} P_h \nabla\cdot \vec \eta^n .
\end{align*}
By a change of indices $j=N+k-j'$ and $n=N+k-n'$, we see that 
\begin{align}\label{Lh-dual-expr}
(L_h' \vec\eta)^{N+k-j'}
=  \tau \sum_{n'=k}^{j'} \nabla E_h^{j'-n'} P_h \nabla\cdot \eta^{N+k-n'} ,
\quad j'=k,\dots,N .
\end{align}
By comparing \eqref{Lh-expr} and \eqref{Lh-dual-expr}, we see that $w_h^{n}:=(L_h' \vec\eta)^{N+k-n}$ is the ``gradient'' of the solution to \eqref{BDF-FEM} with $f_h^n=P_h \nabla\cdot \vec \eta^{N+k-n}$. 
In Part 1, we have already shown that
$$
\|(w_h^n)_{n=k}^N\|_{\ell^{p'}(L^{q'})}
\le 
C\|(f_h^n)_{n=k}^{N}\|_{\ell^{p'}(W^{-1,q'})}
=
C\|(P_h \nabla\cdot \eta^{n})_{n=k}^{N}\|_{\ell^{p'}(W^{-1,q'})} , 
$$
where $2\le q'<\infty.$ for $1<q\le 2$. Since $P_h$ is symmetric and bounded on $W^{1,q}$, it follows that $P_h$ is also bounded on $W^{-1,q'}$ (as the dual space of $W^{1,q}$). Hence, the inequality above reduces to 
$$
\|(w_h^n)_{n=k}^N\|_{\ell^{p'}(L^{q'})}
\le 
C\|( \nabla\cdot \eta^{n})_{n=k}^N\|_{\ell^{p'}(W^{-1,q'})} 
\le
C\|( \eta^{n})_{n=k}^N\|_{\ell^{p'}(L^{q'})} .
$$
This prove the boundedness of $L_h'$ on $\ell^{p'}(L^{q'})$. By the duality between $\ell^{p'}(L^{q'})$ and $\ell^{p}(L^{q})$, the operator $L_h$ must be bounded on $\ell^p(L^{q})$. Using this boundedness of $L_h$, from \eqref{Lh-expr} we derive that 
\begin{align}\label{grad-uhn-fhn}
\|(\nabla u_h^n)_{n=k}^N\|_{\ell^{p}(L^q)}
\le
C\|(g^{n})_{n=k}^N\|_{\ell^{p}(L^{q})} 
=
C\|(a\nabla  A^{-1}f_h^n)_{n=k}^N\|_{\ell^{p}(L^{q})}   .
\end{align}
Using integration by parts and the symmetry of the operator $ A^{-1}$, we have 
\begin{align*}
|(a\nabla  A^{-1}f_h^n , v)|
= |( f_h^n ,  A^{-1} \nabla\cdot v)|
&\le
\| f_h^n \|_{W^{-1,q}} 
\|  A^{-1} \nabla\cdot (a v) \|_{W^{1,q'}} \\
&\le
C\| f_h^n \|_{W^{-1,q}} \| \nabla\cdot (a v) \|_{W^{-1,q'}} \\
&\le
C\| f_h^n \|_{W^{-1,q}} \| v \|_{L^{q'} } , 
\end{align*}
where the second to last inequality is exactly \eqref{W1q-Poisson}. The inequality above implies (via duality) 
$$
\|a\nabla  A^{-1}f_h^n\|_{L^{q}} 
\le
C\| f_h^n \|_{W^{-1,q}} . 
$$
Substituting this into \eqref{grad-uhn-fhn} yields the desired result, i.e.,
\begin{align}\label{uhn-W1q-fhn}
\|(u_h^n)_{n=k}^N\|_{\ell^{p}(W^{1,q})}
\le 
\C\|(f_h^n)_{n=k}^N\|_{\ell^{p}(W^{-1,q})}   .
\end{align}

Part 3: For both $2\le q<\infty$ and $1<q\le 2$, we have proved \eqref{uhn-W1q-fhn} in Part 1 and Part 2, respectively. Now, testing \eqref{BDF-FEM} by $v$, we obtain
\begin{align*}
\bigg|\bigg( \frac{1}{\tau} \sum_{j=0}^k \delta_j u_h^{n-j} ,v\bigg) \bigg| 
&=\bigg|\bigg( \frac{1}{\tau} \sum_{j=0}^k \delta_j u_h^{n-j} , P_hv\bigg) \bigg| \\
&=
\big| - \sum_{i,j=1}^d (a_{ij}\partial_j u_h^n,\partial_iP_hv) + (f_h^n, P_hv) \big| \\
&\le
C \big( \|\nabla u_h^n\|_{\ell^p(L^q)}+ \|f_h^n\|_{\ell^{p}(W^{-1,q})} \big)
\|v\|_{\ell^{p'}(W^{1,q'})} ,
\quad\forall\, v\in \ell^{p'}(W^{1,q'}).
\end{align*}
By the duality between $\ell^p(W^{-1,q})$ and $\ell^{p'}(W^{1,q'})$, the inequality above proves that 
$$
\bigg\|\bigg( \frac{1}{\tau} \sum_{j=0}^k \delta_j u_h^{n-j} \bigg)_{n=k}^N\bigg\|_{\ell^p(W^{-1,q})}
\le
C \big( \|\nabla u_h^n\|_{\ell^p(L^q)}+ \|f_h^n\|_{\ell^{p}(W^{-1,q})} \big) . 
$$
Then, using the equivalence relation \eqref{d_tau-equivlence}, we obtain 
$$
\|( d_\tau u_h^{n} )_{n=k}^N\|_{\ell^p(W^{-1,q})}
\le
C \big( \|\nabla u_h^n\|_{\ell^p(L^q)}+ \|f_h^n\|_{\ell^{p}(W^{-1,q})} \big) . 
$$
This completes the proof of Theorem \ref{THM:LpW1q-Laplacian}. 
\end{proof}

\begin{remark}\label{Remark:Lq-Ritz}
In the proof of Theorem \ref{THM:LpW1q-Laplacian} we have used \eqref{Ritz-Laplacian-Lq}. This can be proved by the following simple argument. 
For $2\leq q<\infty$, we define
$$
\left\{
\begin{aligned}
&- A v
=\operatorname{sign} 
\left(u-R_{h} u\right)\left|u-R_{h} u\right|^{q-1} &&\mbox{in}\,\,\,\varOmega,\\
&v=0 &&\mbox{on}\,\,\,\partial\varOmega ,
\end{aligned}
\right.
$$
and test the equation above by $u-R_{h} u$. Then we obtain, via integration by parts,
\begin{align*}
\|u-R_{h} u\|_{L^q}^q
=\sum_{i,j=1}^d (a_{ij}\partial_j  (u-R_{h} u),\partial_i v) 
&=\sum_{i,j=1}^d (a_{ij}\partial_j  (u-R_{h} u),\partial_i (v-P_hv)) \\ 
&\le
\|u-R_{h} u\|_{W^{1,q}} \, Ch \|v\|_{W^{2,q'}} .
\end{align*}
Since $1<q'\le 2$, by using the $W^{2,q'}$ estimate of elliptic equations in a convex polygon or polyhedron (see Lemma \ref{Lemma:W2q-aij}), we have
$$
\|v\|_{W^{2,q'}}
\le
C\|u-R_{h} u\|_{L^{(q-1)q'}}^{q-1}
=
C\|u-R_{h} u\|_{L^{q}}^{q-1} . 
$$
The last two estimates imply 
$$
\|u-R_{h} u\|_{L^q} 
\le
Ch \|u-R_{h} u\|_{W^{1,q}}  \quad\mbox{for}\,\,\, 2\le q<\infty. 
$$
This implies \eqref{Ritz-Laplacian-Lq}. It remains to prove the following lemma on the $W^{2,q'}$ estimate.  
\hfill\qed
\end{remark}

\begin{lemma}\label{Lemma:W2q-aij}
{\it 
In a convex polygon or polyhedron, there exists $q_0>2$ such that for any $1<q<q_0$ and $g\in L^{q}$, the equation 
\begin{align}\label{Elliptic-Eq-v-g}
\left\{
\begin{aligned}
& A v=g &&\mbox{in}\,\,\,\varOmega,\\
&v=0 &&\mbox{on}\,\,\,\partial\varOmega,
\end{aligned}
\right.
\end{align}
has a unique solution $v\in W^{2,q}$, which satisfies 
\begin{align}\label{W2q-convex}
\|v\|_{W^{2,q}}\le C\|g\|_{L^{q}} . 
\end{align} 
}
\end{lemma}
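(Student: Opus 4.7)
The plan is to reduce the statement, via a localize-freeze-perturb argument, to the known $W^{2,q}$-regularity of the Dirichlet Laplacian on convex polyhedra. First observe that since $d+\beta>d$, Morrey's embedding gives $a_{ij}\in C^{0,\alpha}(\overline\varOmega)$ with $\alpha=\beta/(d+\beta)$, so the leading coefficients are uniformly continuous on $\overline\varOmega$. Rewriting $Av=g$ in non-divergence form
$$\sum_{i,j=1}^d a_{ij}(x)\,\partial_i\partial_j v = g - \sum_{i,j=1}^d (\partial_i a_{ij})\,\partial_j v,$$
existence and uniqueness of a weak solution $v\in H^1_0$ follow from the Lax-Milgram theorem applied to the coercive, bounded bilinear form induced by $A$ via \eqref{Ellipticitiy}, which also supplies the a~priori bound $\|v\|_{H^1}\le C\|g\|_{L^2}$.

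The base case $q=2$ is classical $H^2$-regularity for elliptic operators with continuous leading coefficients on a convex polyhedron. I would localize by a partition of unity near each boundary point, freeze the principal part at a reference point $x_0$, and apply an affine change of variables sending $\sum a_{ij}(x_0)\partial_i\partial_j$ to $\Delta$ while preserving local convexity of $\varOmega$; the Grisvard-type estimate $\|w\|_{H^2}\le C\|\Delta w\|_{L^2}$ for $w\in H^2\cap H^1_0$ on convex polyhedra then gives the frozen estimate. The perturbation $\sum(a_{ij}(x)-a_{ij}(x_0))\partial_i\partial_j v$ is $O(r^\alpha)$ on a ball of radius $r$ and is absorbed by choosing $r$ small, while the lower-order term $(\partial_i a_{ij})\partial_j v$ is controlled in $L^2$ using $\partial_i a_{ij}\in L^{d+\beta}$ and $\partial_j v\in H^1\hookrightarrow L^{2d/(d-2)}$. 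This yields $\|v\|_{H^2}\le C\|g\|_{L^2}$.

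To extend to a range $1<q<q_0$ with some $q_0>2$, I invoke the Dauge-Grisvard $W^{2,q}$-regularity for the Dirichlet Laplacian on convex polygons/polyhedra, which holds for $1<q<q_\Delta$ with $q_\Delta=\infty$ in two dimensions and some explicit $q_\Delta>2$ in three dimensions (determined by the maximal edge and vertex angles). The same localize-freeze-perturb scheme transfers this to the variable-coefficient operator $A$ provided the commutator term $(\partial_i a_{ij})\partial_j v$ can be controlled in $L^q$; using $\partial_i a_{ij}\in L^{d+\beta}$ together with the Sobolev embedding $W^{2,q}\hookrightarrow W^{1,qd/(d-q)}$ (or better when $q\ge d$), the product lies in $L^q$ and can be absorbed into the left-hand side whenever $q<q_0:=\min(q_\Delta,d+\beta)$. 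This gives \eqref{W2q-convex}; uniqueness for such $q$ reduces to the $q=2$ case via the embedding $W^{2,q}\hookrightarrow H^1_0$.

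\textbf{Main obstacle.} The central difficulty is that $\varOmega$ is only a convex polyhedron, so standard smooth-domain Calderón-Zygmund estimates fail up to the boundary and one is forced to rely on the more delicate Dauge-Grisvard regularity near edges and vertices. The freezing scheme is complicated by the merely Hölder regularity of $a_{ij}$: the commutator $(\partial_i a_{ij})\partial_j v$ must be absorbed in $L^q$ via a careful Sobolev-exponent count, and it is precisely this count, coupled with the corner-induced threshold $q_\Delta$, that forces the restriction $q<q_0$.
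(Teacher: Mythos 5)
Your overall scheme --- localize with a cut-off, freeze the leading coefficients at a point $x_0$ using the H\"older continuity coming from $W^{1,d+\beta}\hookrightarrow C^{0,1-\frac{d}{d+\beta}}$, invoke the Dauge constant-coefficient $W^{2,q}$ theory on convex polyhedra for $1<q<q_0$, and absorb the small principal perturbation $\sum(a_{ij}-a_{ij}(x_0))\partial_i\partial_j$ --- is exactly the paper's. The genuine divergence, and the place where your argument has a gap, is the treatment of the first-order term $(\partial_i a_{ij})\partial_j v$. You bound it via $\partial_i a_{ij}\in L^{d+\beta}$ and $W^{2,q}\hookrightarrow W^{1,qd/(d-q)}$ and then say it ``can be absorbed into the left-hand side.'' But the resulting estimate is $\|(\partial_i a_{ij})\partial_j v\|_{L^q}\le C\|\nabla a\|_{L^{d+\beta}}\|v\|_{W^{2,q}}$ with a constant that is in no way small, so direct absorption fails. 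To repair this along your lines you would need (i) to pair $\nabla a\in L^{d+\beta}$ with $\nabla v\in L^{s}$, $\frac1s=\frac1q-\frac1{d+\beta}$, which is \emph{subcritical} relative to $W^{2,q}$, and then a Gagliardo--Nirenberg interpolation $\|\nabla v\|_{L^s}\le\epsilon\|v\|_{W^{2,q}}+C_\epsilon\|v\|_{W^{1,q}}$ together with an independent a priori bound on $\|v\|_{W^{1,q}}$ (available in convex domains from the $W^{1,q}$ estimate \eqref{W1q-Poisson}); and (ii) a qualitative justification that $v$ already lies in $W^{2,q}$ so that the absorption is legitimate (e.g.\ approximation of the coefficients by smooth ones, or a continuity method). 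Neither step is in your write-up.

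The paper avoids this circularity entirely: before localizing, it derives the a priori bound $\nabla v\in L^{q_*}$ with $\frac1{q_*}=\frac1q-\frac1{d+\beta}$ directly from the Gr\"uter--Widman pointwise estimate $|\nabla_xG_A(x,y)|\le C|x-y|^{1-d}$ for the Green's function in convex domains, combined with the weak-type Young inequality. With that bound in hand, H\"older's inequality places $(\partial_i a_{ij})\omega_\varepsilon\partial_j v$ in $L^q$ with norm controlled by the data alone, so the entire right-hand side of the localized frozen-coefficient equation is known to be in $L^q$ and the only thing that needs smallness is the principal perturbation. If you rewrite your proof, either adopt that a priori gradient estimate or supply the interpolation-plus-qualitative-regularity argument explicitly; as it stands the absorption step is the missing idea. (A small additional remark: your reduction of uniqueness via $W^{2,q}\hookrightarrow H^1_0$ fails for $q$ close to $1$ in $d=3$; uniqueness should simply be taken from the variational $H^1_0$ theory, which is where existence comes from in the first place.)
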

\begin{proof}
If the coefficients $a_{ij}$ are constants or smooth, then Lemma \ref{Lemma:W2q-aij} was already proved in \cite[Corollary 3.7 and 3.12]{Dauge1992} for some $q_0>2$. By using a perturbation argument, we prove that Lemma \ref{Lemma:W2q-aij} still holds for the same $q_0$ when $a_{ij}\in W^{1,d+\beta}$. 

When $a_{ij}\in W^{1,d+\beta}\hookrightarrow C^{1-\frac{d}{d+\beta}}$, the solution of \eqref{Elliptic-Eq-v-g} is given by 
$$
v(x)=\int_\varOmega G_A(x,y)g(y)\d y ,
$$
where $G_A(x,y)$ is the Green's function of the elliptic equation \eqref{Elliptic-Eq-v-g}, satisfying the following estimate in a convex domain (cf. \cite[Theorem 3.3]{GruterWidman1982}): 
$$
|\nabla_x G_A(x,y)|\le C|x-y|^{1-d}  .
$$
Hence, we have 
\begin{align*}
\|\nabla v\|_{L^{p,\infty}}
&=
\bigg\| \int_\varOmega \nabla_x G_A(x,y) g(y)\d y \bigg\| \\
&\le
C\bigg\| \int_\varOmega \frac{1}{|x-y|^{d-1}} g(y)\d y \bigg\| \\
&\le
C\bigg \| \frac{1}{|x|^{d-1}} \bigg\|_{L^{\frac{d}{d-1},\infty}} \|g\|_{L^{q}} \\
&\le
C \|g\|_{L^{q}} 
\qquad\mbox{with}\quad
\frac{1}{p} 
=\frac{1}{q} - \frac{1}{d} < \frac{1}{q} - \frac{1}{d+\beta} , 
\end{align*}
where the second to last inequality is Young's inequality of weak type, and $L^{\frac{d}{d-1},\infty}$ is the weak-type $L^{\frac{d}{d-1}}$ space; see \cite[Theorem 1.2.13]{Grafakos2008}. The inequality above implies that 
\begin{align}\label{grad-v-Lq-star}
\nabla v\in L^{p,\infty}\hookrightarrow L^{q_*}\quad\mbox{for}\quad 
\frac{1}{q_*}  := \frac{1}{q} - \frac{1}{d+\beta} \quad\mbox{(because $q_*<p$)} .
\end{align}

For any point $x_0\in \varOmega$, we consider a smooth cut-off function $\omega_\varepsilon$ with the following properties:
\begin{align}\label{properties-omega}
\omega_\varepsilon(x)= 
\left\{
\begin{aligned}
&1 && |x-x_0|<\varepsilon ,\\
&0 && |x-x_0|\ge 2\varepsilon, 
\end{aligned}
\right.
\quad\mbox{and}\quad
\|\nabla^m\omega_\varepsilon\|_{L^\infty}\le C\varepsilon^{-m} 
\,\,\,\mbox{for}\,\,\,m\ge 0 .
\end{align}
Since $a_{ij}\in W^{1,d+\beta}\hookrightarrow C^{1-\frac{d}{d+\beta}}$, it follows that 
\begin{align}\label{aijx-aijx0}
|a_{ij}(x)-a_{ij}(x_0)|
\le C\varepsilon^{1-\frac{d}{d+\beta}}
\quad\mbox{on the support of $\omega_\varepsilon$}. 
\end{align}
Then multiplying \eqref{Elliptic-Eq-v-g} by $\omega_\varepsilon$ yields 
\begin{align}\label{Eq-L}
\left\{
\begin{aligned}
& L (\omega_\varepsilon v) 
= \omega_\varepsilon g 
+  \sum_{i,j=1}^d   ( a_{ij} v \partial_i \partial_{j} \omega_\varepsilon
+ a_{ij}\partial_j  v \partial_i \omega_\varepsilon
- \partial_i a_{ij} \omega_\varepsilon \partial_j v )
&&\mbox{in}\,\,\,\varOmega,\\
&\omega_\varepsilon v=0 &&\mbox{on}\,\,\,\partial\varOmega,
\end{aligned}
\right.
\end{align}
with 
$$
L
=
\sum_{i,j=1}^d a_{ij}(x_0) \partial_i \partial_j 
+\sum_{i,j=1}^d (a_{ij} -a_{ij}(x_0)) \partial_i \partial_j 
=: L_0 + B .
$$ 
In view of \eqref{aijx-aijx0}, we can choose a sufficiently small $\varepsilon$ (smaller than some constant depending on $\|a_{ij}\|_{W^{1,d+\beta}}$) such that $L$ becomes a small perturbation of $L_0$ as an operator from $W^{2,q}$ to $L^q$. Since the operator $L_0:W^{2,q}\rightarrow L^q$ (with constant coefficients) is invertible when $1<q<q_0$, it follows that $\omega_\varepsilon v\in W^{2,q}$ if the right-hand side of \eqref{Eq-L} is in $L^q$ when $1<q<q_0$. Indeed, the right-hand side is in $L^q$ because 
\begin{align*}
\|a_{ij} v \partial_{ij} \omega_\varepsilon
+ a_{ij}\partial_j  v \partial_i \omega_\varepsilon-\partial_ia_{ij} \omega_\varepsilon \partial_jv\|_{L^q}
\le
C\varepsilon^{-2} \|a_{ij}\|_{W^{1,d+\beta}} \|v\|_{W^{1,q_*}} 
\le
C\varepsilon^{-2} ,
\end{align*}
where the last inequality is due to \eqref{grad-v-Lq-star}, and the second to last inequality is due to the properties of the cut-off function $\omega_\varepsilon$. This proves that for any $x_0\in\varOmega$ there is a neighborhood $B_\varepsilon(x_0)\cap \varOmega$ (of constant radius $\varepsilon$) on which the solution $v$ is in $W^{2,q}$. 
\end{proof}

\section{Proof of Theorem \ref{Resolvent-Laplacian}}
\label{section:proof}

In Section \ref{section:main}, we have used the results of Theorem \ref{Resolvent-Laplacian} to establish maximal $L^p$-regularity of multistep fully discrete FEMs. In this section, we prove Theorem \ref{Resolvent-Laplacian} by formulating it into a form which can be proved in the same way as \cite[Proof of Corollary 2.1]{Li2019}. Then we only sketch the proof by following the outline of \cite[Proof of Corollary 2.1]{Li2019} and highlighting the difference from \cite[Proof of Corollary 2.1]{Li2019}. 

\subsection{Notation}
We use the same notation as \cite[Section 3]{Li2019}, including the notation of function spaces, local approximation properties of the finite element spaces,  delta function, regularized delta functions, Green's function, regularized Green's function, and Dyadic decomposition of the domain. These notations will not be duplicated in the current paper. The only changes of notation in the current paper are: \vspace{-1pt}
\begin{enumerate}
\item
The dimension of space is denoted by $d$ in this article (and $N$ in \cite{Li2019}).\vspace{-5pt}

\item
The elliptic operator is $A=\sum_{i,j=1}^d\partial_i(a_{ij}\partial_j)$ in this article (and Laplacian $\Delta$ in \cite{Li2019}).
\end{enumerate}
We denote by 
\begin{align}
(u,v)=\int_\varOmega u(x)v(x)\d x 
\quad\mbox{and}\quad
\langle u,v\rangle=\int_\varOmega u(x) \overline{v(x)} \d x 
\quad 
\forall\, u\in L^q,\,\,\,v\in L^{q'},
\end{align}
the real and complex pairing between two complex-valued functions on $\varOmega$, respectively. This notation is consistent with \eqref{def-inner-product} for real-valued functions. 

\subsection{Complex Green's function and its regularized approximation}

We denote by $E(t)=e^{t A}$ the semigroup generated by $ A$ on $L^q$, and denote by $E_h(t)=e^{tA_h}$ the semigroup generated by $A_h$ on $S_h$. 
Since the semigroup $E(t)$ has a bounded analytic extension $E(z):L^q\rightarrow L^q$ to $z\in \Sigma_{\theta}$ for all $\theta\in (-\frac{\pi}{2},\frac{\pi}{2})$ (cf. \cite[Theorem 2.4]{Ouhabaz1995}), we can define 
$$
E^\theta(t):=E(te^{\i\theta}) 
\quad\mbox{for any angle $\theta\in(-\frac{\pi}{2},\frac{\pi}{2})$.}
$$
Then the function 
$$u(t)=E^\theta(t)u_0+ \int_0^t E^\theta(t-s)f(s)\d s $$ is the solution of the complex-valued parabolic problem 
\begin{align} 
\label{PDE1-theta} 
\left\{\begin{array}{ll}
\displaystyle
\partial_t u - e^{\i\theta}  A u = f
&\mbox{in}\,\,\, \R_+\times \varOmega,\\[3pt]
u=0 
&\mbox{on}\,\, \R_+\times\partial\varOmega ,\\[3pt]
u|_{t=0}=u_0
&\mbox{in}\,\,\, \varOmega .
\end{array}\right. 
\end{align} 

It is known that the Green's function $G(t,x,y)$ defined by
\begin{align}\label{GFdef}
\left\{
\begin{aligned}
&\partial_t G(t,x,y) - \sum_{i,j=1}^d \frac{\partial}{\partial x_i} \bigg( a_{ij}(x) \frac{\partial G(t,x,y) }{\partial x_j} \bigg)
= 0 &&\mbox{in}\,\,\, \varOmega\times(0,T], \\
&G(t,x,y) =0 &&\mbox{on}\,\,\,\partial\varOmega\times(0,T], \\[5pt]
&G(0,x,y)=\delta_y(x)&&\mbox{in}\,\,\,\varOmega .
\end{aligned}
\right.
\end{align} 
is symmetric with respect to $x$ and $y$, and has an analytic extension $G(z,x,y)$ for $z\in \Sigma_{\varphi}$, 
satisfying the following Gaussian estimate
(cf. \cite[p.\ 103]{Davis1989} or \cite[Proposition 2.3]{Ouhabaz1995}): 
\begin{equation}
|G(z,x,y)|\leq C_\varphi|z|^{-\frac{d}{2}}
e^{-\frac{|x-y|^2}{C_\varphi|z|}}, 
\quad \forall\, z\in \varSigma_{\varphi},\,\,\forall\, x,y\in\varOmega ,
\quad\forall\,\varphi\in(0,\pi/2), 
\label{GKernelE0}
\end{equation}
where the constant $C_\varphi$ depends only on $\varphi$. The Cauchy integral formula implies that 
\begin{equation}
\partial_z^kG(z,x,y)
=\frac{k!}{2\pi i}
\int_{|\zeta-z|=\frac12|z|\sin(\varphi)}\,\, \frac{G(\zeta,x,y)}{(\zeta-z)^{k+1}}\d \zeta \qquad\forall\,z\in\Sigma_\varphi,
\end{equation}
which further yields the following Gaussian pointwise estimate for the time derivatives of Green's function: 
\begin{align}
&|\partial_z^kG(z,x,y)|\leq \frac{C_{\varphi,k}}{z^{k+d/2}} e^{-\frac{|x-y|^2}{C_{\varphi,k}|z|}}, &&
\forall\, x,x_0\in\varOmega,\,\,\, \forall\, z\in\Sigma_\varphi, \,\, k=0,1,2,\dots \label{GausEst1}
\end{align} 
Since $G(t,x,y)=G(t,y,x)$ (symmetry of Green's function) for $t>0$, it follows that their analytic extensions are also equal, i.e.,
\begin{align}\label{symmetric-G}
G(z,x,y)=G(z,y,x)\quad\forall\,x,y\in\varOmega,\,\,\,\forall\, z\in\Sigma_{\varphi}  .
\end{align} 

It is straightforward to verify that $G^\theta(t,x,y):=G(te^{\i\theta},x,y)$ is the solution of the complex-valued parabolic equation 
\begin{align}\label{GFdef-complex}
\left\{\begin{array}{ll}
\partial_tG^\theta(\cdot,\cdot\, ,y)- e^{\i\theta}  A G^\theta(\cdot,\cdot\, ,y)=0
&\mbox{in}\,\,\, \R_+ \times \varOmega,\\
G^\theta(\cdot,\cdot\, ,y) = 0
&\mbox{on}\,\,\, \R_+\times \partial\varOmega ,\\
G^\theta(0,\cdot,y)= \delta_{y}
&\mbox{in}\,\,\,\varOmega .
\end{array}\right.
\end{align}
The estimate \eqref{GausEst1} implies that 
\begin{align}
&|\partial_t^kG^\theta(t,x,y)|\leq 
\frac{C_{\theta,k}}{t^{k+d/2}} e^{-\frac{|x-y|^2}{C_{\theta,k} t}}, &&
\forall\, x,y\in\varOmega,\,\,\, \forall\, t>0, \,\, k=0,1,2,\dots \label{GausEst-phi-1}
\end{align} 
where the constant $C_{\theta,k}$ would be bounded when $\theta$ is bounded away from $\pm\frac\pi2$. 
%

We define the regularized Green's function $\Gamma(t,x, y)$ by
\begin{align}\label{RGFdef0}
\left\{
\begin{aligned}
&\partial_t \Gamma(t,x,y) - \sum_{i,j=1}^d \frac{\partial}{\partial x_i} \bigg( a_{ij}(x) \frac{\partial \Gamma(t,x,y) }{\partial x_j} \bigg)
= 0 &&\mbox{in}\,\,\, \varOmega\times(0,T], \\
&\Gamma(t,x,y) =0 &&\mbox{on}\,\,\,\partial\varOmega\times(0,T], \\[5pt]
&\Gamma(0,x,y)=\widetilde\delta_y(x)&&\mbox{in}\,\,\,\varOmega ,
\end{aligned}
\right.
\end{align}
where $\widetilde\delta_y$ denotes the regularized delta function defined in \cite[Section 3.3]{Li2019}. Similarly as the complex Green's function, we define $\Gamma^\theta(t,x, y)=\Gamma(te^{\i\theta},x, y)$, which is the solution of
\begin{align}\label{RGFdef}
\left\{
\begin{aligned}
&\partial_t \Gamma^\theta(t,x,y) - e^{\i\theta} \sum_{i,j=1}^d \frac{\partial}{\partial x_i} \bigg( a_{ij}(x) \frac{\partial \Gamma^\theta(t,x,y) }{\partial x_j} \bigg)
= 0 &&\mbox{in}\,\,\, \varOmega\times(0,T], \\
&\Gamma^\theta(t,x,y) =0 &&\mbox{on}\,\,\,\partial\varOmega\times(0,T], \\[5pt]
&\Gamma^\theta(0,x,y)=\widetilde\delta_y(x)&&\mbox{in}\,\,\,\varOmega ,
\end{aligned}
\right.
\end{align}
and can be represented by 
\begin{align}\label{expr-Gamma}
&\Gamma^\theta(t,x,y)=\int_\varOmega  G^\theta(t,x',x)\widetilde\delta_{y}(x')\d x'=\int_\varOmega G^\theta(t,x,x')\widetilde\delta_{y}(x')\d x'  . 
\end{align}
From \eqref{expr-Gamma} and \eqref{GausEst1} we can derive the following pointwise estimate:
\begin{align}
&|\partial_t^k\Gamma^\theta(t,x,y)|\leq \frac{C_{\theta,k}}{t^{k+d/2}} e^{-\frac{|x-y|^2}{C_{\theta,k} t}},  
\quad k=0,1,2,\dots  
\label{GausEstGamma}
\end{align} 
for $x,y\in\varOmega$ and $t>0$ such that $\max(|x-y|,\sqrt{t})\ge 2h$. 

Similarly, for the discrete Green's function $\Gamma_h(t, \cdot , y)\in S_h$ defined by
\begin{align}\label{EqGammh-0}
\left\{\begin{aligned} 
&(\partial_t\Gamma_h (t,\cdot,y),v_h) + \sum_{i,j=1}^d (\partial_j\Gamma_h(t,\cdot,y),\partial_i v_h)=0 
&&\forall \, v_h\in S_h,\,\, \forall\, t>0 ,\\[5pt]
&\Gamma_h(0,\cdot,y)=  \delta_{h,y} ,
\end{aligned}\right.
\end{align}
we define the complex-valued finite element function $\Gamma^\theta_h(t,x, y)=\Gamma_h(te^{\i\theta},x, y)$, which is the solution of 
\begin{align}\label{EqGammh}
\left\{\begin{aligned} 
&(\partial_t\Gamma_h^\theta(t,\cdot,y),v_h) + e^{\i\theta} \sum_{i,j=1}^d (\partial_j\Gamma_h^\theta(t,\cdot,y),\partial_i v_h)=0 
&&\forall \, v_h\in S_h,\,\, \forall\, t>0 ,\\[5pt]
&\Gamma_h^\theta(0,\cdot,y)=  \delta_{h,y} .
\end{aligned}\right.
\end{align}

\subsection{A key lemma}

We fix an arbitrary angle $\theta\in (-\frac{\pi}{2},\frac{\pi}{2})$ and consider the finite element solution of the following semidiscrete problem: 
\begin{align} 
\label{FEM1-theta} 
\left\{\begin{array}{ll}
\displaystyle
(\partial_t u_h(t),v_h) + e^{\i\theta} \sum_{i,j=1}^d (\partial_j  u_h(t),\partial_i v_h) = (f_h(t) ,v_h)
\quad \forall\, v_h\in S_h,\,\,\forall\, t>0, \\[8pt]
u_h(0)=u_{0,h} , 
\end{array}\right. 
\end{align} 
which is the finite element approximation of \eqref{PDE1-theta}. 
The solution of \eqref{FEM1-theta} can be written as 
$$u_h(t)=E_h^\theta(t)u_{0,h} + \int_0^t E_h^\theta(t-s)f_h(s)\d s ,$$ 
where $E_h^\theta(t)$ is the semigroup  generated by $e^{\i\theta}A_h$. 
By using the Green's functions defined in \eqref{GFdef-complex} and \eqref{EqGammh}, the solutions of \eqref{PDE1-theta} and \eqref{FEM1-theta} can be represented by 
\begin{align}
&u(t,y)=\int_\varOmega  G^\theta(t,x,y)u_0(x)\d x + \int_0^t\int_\varOmega G^\theta(t-s,x,y)f(s,x)\d x\d s ,\\
&u_h(t,y)=\int_\varOmega  \Gamma_h^\theta(t,x,y)u_{0,h}(x)\d x + \int_0^t\int_\varOmega \Gamma_h^\theta(t-s,x,y)f(s,x)\d x\d s , \label{uh-Gammah} 
\end{align}
and
\begin{align}
&(E^\theta(t)v)(y)=\int_\varOmega  G^\theta(t,x,y) v(x)\d x  , 
\qquad
(E_h^\theta(t)v_h)(y)=\int_\varOmega  \Gamma_h^\theta(t,x,y)v_{h}(x)\d x .  \label{semigroup-Green-discr}
\end{align}
Clearly, the operator $E_h^\theta(t)P_h:L^q \rightarrow S_h$ is an extension of $E_h^\theta(t):S_h\rightarrow S_h$ to the domain $L^q$. Since $\Gamma_h^\theta(t,x,y)$ is a finite element function in $x$, it follows that 
\begin{align}
&(E_h^\theta(t) P_h v) (y) 
=\int_\varOmega  \Gamma_h^\theta(t,x,y) P_hv (x) \d x 
=\int_\varOmega  \Gamma_h^\theta(t,x,y)v (x) \d x 
\quad\forall\, v\in L^q. 
\end{align}

We shall prove the following result, which is the key to prove Theorem \ref{Resolvent-Laplacian}.

\begin{lemma}\label{MainLemma1}
{\it 
For any $\theta\in(0,\frac\pi2)$, the following estimates hold: 
\begin{align}
&\sup_{t>0}\,(\|E_h^\theta(t)v_h\|_{L^{q}}
+t\|\partial_tE_h^\theta(t)v_h\|_{L^{q}})
\leq C_\theta\|v_h\|_{L^{q}}  
&&\forall\,\, v_h\in 
S_h ,
&&\forall\,\, 1\leq q\leq\infty , \label{analyticity} \\ 
&\left\|\sup_{t>0}|E_h^\theta(t)P_h|\, |v| \right\|_{L^{q}}
\leq C_{\theta,q}\|v\|_{L^{q}} 
&& \forall\,v\in L^q, &&\forall\, 1<q\le \infty , \label{MEgodic2}
\end{align}
where 
\begin{align}
(|E_h^\theta(t)|v) (y):=\int_\varOmega |\Gamma_h^\theta(t,x,y)| v(x)\d x ,\quad\forall\, v\in L^q ,
\end{align}
and the constants $C$ and $C_q$ are independent of $h$ and $T$ (bounded when $\theta$ is away from $\pm \frac\pi2$). 
}
\end{lemma}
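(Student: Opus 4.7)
The plan is to reduce both estimates to a single pointwise Gaussian bound on the regularized discrete Green's function, namely
\begin{align*}
|\Gamma_h^\theta(t,x,y)| \le \frac{C_\theta}{(t+h^2)^{d/2}}\,\exp\!\Big(\!-\frac{|x-y|^2}{C_\theta(t+h^2)}\Big),
\end{align*}
valid uniformly in $h$ for all $x,y\in\varOmega$ and $t>0$, together with the corresponding bound for $\partial_t\Gamma_h^\theta$ carrying an extra factor $(t+h^2)^{-1}$. Granting this, \eqref{analyticity} follows immediately from the representation \eqref{semigroup-Green-discr}: the $L^1$ norm in $x$ of the Gaussian kernel is uniformly bounded, which yields the $q=\infty$ estimate, while the $q=1$ estimate follows by integrating in $y$ and using the symmetry $\Gamma_h^\theta(t,x,y)=\Gamma_h^\theta(t,y,x)$ (the discrete analogue of \eqref{symmetric-G}), with $1<q<\infty$ obtained by Riesz--Thorin. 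The time-derivative bound $t\|\partial_tE_h^\theta(t)v_h\|_{L^q}\le C_\theta\|v_h\|_{L^q}$ is derived from the same Gaussian bound via Cauchy's integral formula in time, exactly as \eqref{GausEst1} is obtained from \eqref{GKernelE0}.

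For the maximal estimate \eqref{MEgodic2}, the Gaussian bound implies that the maximal kernel $K(x,y):=\sup_{t>0}|\Gamma_h^\theta(t,x,y)|$ is dominated, uniformly in $h$, by $C\min(h^{-d},|x-y|^{-d})$. Consequently
\begin{align*}
\bigl(\sup_{t>0}|E_h^\theta(t)P_h|\,|v|\bigr)(y) \le C\,M|v|(y),
\end{align*}
where $M$ is the Hardy--Littlewood maximal operator on $\varOmega$, and the standard maximal inequality gives \eqref{MEgodic2} for every $q\in(1,\infty]$.

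The whole lemma is therefore reduced to the pointwise Gaussian bound on $\Gamma_h^\theta$, which is the technical core and is attacked along the lines of \cite[Proof of Corollary 2.1]{Li2019}. The strategy, adapted to the present variable-coefficient setting, is: (i) perform a dyadic decomposition of $\varOmega$ around the pole $y$; (ii) on each annulus derive local $L^2$ energy bounds for the discrete equation \eqref{EqGammh}, combined with Davies' exponential-weight trick (testing against $e^{\alpha\cdot x}\Gamma_h^\theta$ and optimizing in $\alpha$) to extract off-diagonal exponential decay; (iii) upgrade these local $L^2$ bounds to pointwise $L^\infty$ bounds by comparing $\Gamma_h^\theta$ with the regularized continuous Green's function $\Gamma^\theta$ from \eqref{RGFdef} and invoking local finite element error estimates together with the continuous Gaussian bound \eqref{GausEstGamma}.

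The main obstacle is step (iii): the local comparison argument of \cite{Li2019} was written for the Laplacian, whereas here it must be carried out for the variable-coefficient operator $A$. This is handled by freezing $a_{ij}(x)$ at a representative point on each dyadic annulus, treating the difference $a_{ij}(x)-a_{ij}(x_0)$ as a small perturbation via the H\"older bound $|a_{ij}(x)-a_{ij}(x_0)|\le C|x-x_0|^{1-d/(d+\beta)}$ that comes from the embedding $W^{1,d+\beta}\hookrightarrow C^{1-d/(d+\beta)}$, and absorbing the perturbation by a fixed-point argument analogous to the one used in the proof of Lemma \ref{Lemma:W2q-aij}. The exponent $\beta>0$ in \eqref{Regularity:aij} is precisely what is needed to make this perturbation small on sufficiently small dyadic shells, so that the argument of \cite{Li2019} transfers to the present setting.
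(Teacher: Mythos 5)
Your reduction of the whole lemma to a pointwise Gaussian bound
$|\Gamma_h^\theta(t,x,y)|\le C_\theta (t+h^2)^{-d/2}\exp(-|x-y|^2/C_\theta(t+h^2))$, uniform in $h$, is a genuinely different route from the paper's, and it is where the proposal has a real gap. The paper never establishes (or uses) a pointwise Gaussian bound on the discrete Green's function; it works instead with the much weaker $L^1$-type estimates of Lemma \ref{LemGm2}: the spatial $L^1$ bounds \eqref{L1Gammh} on $\Gamma_h^\theta$ and $t\,\partial_t\Gamma_h^\theta$ (which give \eqref{analyticity} directly via \eqref{semigroup-Green-discr}, symmetry and duality), and the space--time $L^1$ bound \eqref{L1Ft} on $\partial_t F^\theta$ for the error $F^\theta=\Gamma_h^\theta-\Gamma^\theta$. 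The maximal estimate \eqref{MEgodic2} is then obtained by splitting $\Gamma_h^\theta=\Gamma^\theta+F^\theta$: the continuous part is dominated by the Hardy--Littlewood maximal function through the Gaussian bound \eqref{GausEstGamma}, while the error part is controlled by $\sup_{t}|F^\theta|\le|F^\theta(0,\cdot,\cdot)|+\int_0^\infty|\partial_tF^\theta|\,\d t$ together with a Schur-type test using \eqref{L1Ft} and symmetry. This architecture exists precisely because a uniform-in-$h$ pointwise Gaussian bound for the standard Galerkin discrete Green's function in general (possibly nonconvex) polyhedra with $W^{1,d+\beta}$ coefficients is not available: the local energy estimates of Lemma \ref{LocEEst} plus comparison with $\Gamma^\theta$ on dyadic annuli yield weighted $L^2$ and $L^1$ control of the error, but your step (iii) --- upgrading these to a pointwise $L^\infty$ Gaussian bound on $\Gamma_h^\theta$ itself, including the near-diagonal regime $|x-y|\lesssim h$, $t\lesssim h^2$, with genuine Gaussian rather than polynomial off-diagonal decay --- is a substantially stronger claim than anything proved in \cite{Li2019} or here, and you give no argument for it beyond asserting that the comparison ``transfers.'' The variable-coefficient perturbation you describe is a side issue; the obstruction is already present for the Laplacian.

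Two further points. First, even granting the Gaussian bound, your derivation of \eqref{MEgodic2} as written is a non sequitur: from $K(x,y):=\sup_{t>0}|\Gamma_h^\theta(t,x,y)|\le C\min(h^{-d},|x-y|^{-d})$ one cannot conclude domination by $M|v|$, since the kernel $\min(h^{-d},|x-y|^{-d})$ is not integrable uniformly in $h$ (its integral grows like $\log(1/h)$). The correct deduction keeps $t$ fixed, uses the standard radially-decreasing-majorant lemma to get $\int_\varOmega t^{-d/2}e^{-|x-y|^2/Ct}|v(x)|\,\d x\le CM|v|(y)$ uniformly in $t$, and only then takes the supremum. Second, your claim that the $q=1$ case of \eqref{analyticity} follows ``by symmetry'' needs the discrete symmetry $\Gamma_h^\theta(t,x,y)=\Gamma_h^\theta(t,y,x)$, which holds here because $a_{ij}=a_{ji}$, but note that \eqref{analyticity} is asserted for all $1\le q\le\infty$ including the endpoint $q=1$, which the paper obtains from the $L^1$ bound \eqref{L1Gammh} without any detour through pointwise kernels. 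I would recommend reorganizing the proof around the $L^1$ estimates of Lemma \ref{LemGm2} and the splitting $\Gamma_h^\theta=\Gamma^\theta+F^\theta$ rather than attempting the pointwise Gaussian bound.
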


\subsection{Proof of Theorem \ref{Resolvent-Laplacian} based on Lemma \ref{MainLemma1}}

\indent\quad\,\,\,{\it Proof of (1)}: 
By the theory of analytic semigroups \cite[Theorem 3.7.19]{ABHN}, inequality \eqref{analyticity} implies the existence of an angle $\varphi\in(0,\pi/2)$, 
such that the semigroup $\{E_h^\theta(t)\}_{t>0}$ extends to be a
bounded analytic semigroup $\{E_h^\theta(z)\}_{z\in\Sigma_{\varphi}}$ in the sector $\Sigma_{\varphi}$, satisfying
\begin{align} \label{analyticity-varphi}
&\sup_{z\in \Sigma_{\varphi}}\,(\|E_h^\theta(z)v_h\|_{L^{q}}
+|z| \|\partial_zE_h^\theta(z)v_h\|_{L^{q}})
\leq C_\theta^* \|v_h\|_{L^{q}}  
&&\forall\,\, v_h\in 
S_h ,
&&\forall\,\, 1\leq q\leq\infty . 
\end{align}
From the proof of \cite[Theorem 3.7.19]{ABHN} we see that both $\varphi$ and $C_\theta^*$ in \eqref{analyticity-varphi} depend only on the constant $C_\theta$ in \eqref{analyticity} (thus independent of $h$ and $q$).
Then, \eqref{analyticity-varphi} and \cite[Theorem 3.7.11]{ABHN} imply the following result:
\begin{equation}\label{resovlent-theta} 
\mbox{$\big\{z(z - e^{\i\theta}A_h)^{-1} \,:\, z\in\Sigma_{\frac\pi2} \big\}$\, is bounded on $L^q$ for $1\leq q\leq\infty$} ,
\end{equation}
with a bound depending only $C_\theta^*$ (thus independent of $h$). Rewriting the operator $z(z - e^{\i\theta}A_h)^{-1}$ as $e^{-\i\theta}z(e^{-\i\theta}z - A_h)^{-1}$ and replacing $\theta$ by $\pm\theta$ in \eqref{angle-condition-phi}, we obtain the following result (after adding a bounded operator $P_h$): 
\begin{align*}
&\mbox{$\left\{z (z - A_h)^{-1}P_h \,:\, z\in\mathbb{C},\,\, -\theta-\frac{\pi}{2} \le \arg(z) \le -\theta + \frac{\pi}{2} \right\}$} \\[5pt]
\mbox{and}\quad
&\mbox{$\left\{z (z - A_h)^{-1}P_h \,:\, z\in\mathbb{C},\,\, \theta-\frac{\pi}{2} \le \arg(z) \le \theta+\frac{\pi}{2} \right\}$} \quad\mbox{are both bounded on $L^q$},
\end{align*}
for $1\le q\le \infty$, with a bound independent of $h$. Since the union of the two bounded sets above is also bounded, we obtain the first result of Theorem \ref{Resolvent-Laplacian}. 

{\it Proof of (2)}: 
Estimate \eqref{MEgodic2} immediately implies the maximal ergodic estimate
\begin{align}
&\bigg\|\sup_{t>0}\frac{1}{t}\int_0^t |E_h^\theta(s)P_h| |v| \d s \bigg\|_{L^{q}}
\leq C_{\theta,q}\|v\|_{L^{q}} ,
&& \forall\,\, 1<q\le\infty . \label{MEgodic3}
\end{align}

For $q\in(1,2]$, according to \cite[Lemma 4.c]{Weis2001-1}, \eqref{MEgodic3} implies the $R$-boundedness of the semigroup $\{E_h^\theta(z)P_h\}_{z\in\Sigma_{\sigma}}$ on $L^q$, with an angle $\sigma=\varphi q/4$ (thus independent of $h$); from \cite[Proofs of Lemma 4.c and Proposition 4.b]{Weis2001-1}, we see that the $R$-bound depends only on the constants $C_\theta^*$ and $C_{\theta,q}$ in \eqref{analyticity-varphi}--\eqref{MEgodic3} (thus independent of $h$).  

For $q\in[2,\infty)$, we use the fact that the dual operator $E_h^\theta(z)P_h$ is itself, i.e.,
\begin{align}\label{symmetry-Eh-theta}
(E_h^\theta(z)P_hu_0 , w_0 ) = 
( u_0 , E_h^\theta(z)P_h w_0) \quad\forall\, u_0\in L^q,\,\,\, w_0\in L^{q'}.
\end{align} 
This can be seen in the following way. The function $u_h(t)=E_h^\theta(z)P_hu_0$ is a solution of \eqref{FEM1-theta} with $u_{0,h}=P_hu_0$ and $f_h=0$, and the function $w_h(T-t)= (E_h^\theta(z)P_hw_0)(T-t)$ is a solution of the following backward equation: 
\begin{align} 
\label{FEM1-theta-v} 
\left\{\begin{array}{ll}
\displaystyle
(\partial_t w_h(t),v_h) - e^{\i\theta} \sum_{i,j=1}^d (\partial_j w_h(t),\partial_i v_h) = 0
\quad \forall\, v_h\in S_h,\,\,\forall\, t\in[0,T), \\[8pt]
w_h(T)=P_hw_0 . 
\end{array}\right. 
\end{align} 
Hence, substituting $f_h=0$ and $v_h=w_h$ into \eqref{FEM1-theta} and using integration by parts in time, we obtain \eqref{symmetry-Eh-theta}. 
By using the symmetry \eqref{symmetry-Eh-theta}, we see that for $q\in[2,\infty)$ the dual operators $E_h^\theta(z)'=E_h^\theta(z)$, $z\in\Sigma_{\sigma}$, are $R$-bounded on $L^{q'}$ with angle $\sigma=\varphi q'/4$. This implies that the operators $E_h^\theta(z)$, $z\in\Sigma_{\sigma}$, are $R$-bounded on $L^q$ (cf. \cite[Remark 4.b. (ii) with $p=2$]{Weis2001-1}). 

Therefore, for $1<q<\infty$, the following result holds: 
\begin{align}\label{Eh-theta-z}
\mbox{$E_h^\theta(t)$ has a $R$-bounded analytic extension $E_h^\theta(z)$ for $z\in\Sigma_{\sigma}$}, 
\end{align}
where $\sigma=\varphi_\theta \min(q',q)/4$ and the $R$-bound is independent of $h$. This proves the second result of Theorem \ref{Resolvent-Laplacian}. 

{\it Proof of (3)}: 
Since 
$$
z(z - e^{\i\theta}A_h)^{-1}P_h=  z \int_0^\infty e^{-z t}E_h^\theta(t) P_h \d t ,
$$ 
it follows that (cf. \cite[Theorem 2.10]{Weis2001-2})  
\begin{equation}\label{angle-condition-phi} 
\mbox{$\left\{z(z - e^{\i\theta}A_h)^{-1}P_h \,:\, z\in\mathbb{C},\,\,|\arg(z)|\le \frac{\pi}{2} \right\}$\, is $R$-bounded on $L^q$} ,
\end{equation} 
with an $R$-bound depending only on $\sigma$ and the $R$-bound of $E_h^\theta(z)$ (thus independent of $h$).
Rewriting $z(z - e^{\i\theta}A_h)^{-1}P_h$ as $e^{-\i\theta}z(e^{-\i\theta}z - A_h)^{-1}P_h$ and replacing $\theta$ by $\pm\theta$ in \eqref{angle-condition-phi}, we obtain the following result: 
\begin{align*}
&\mbox{$\left\{z (z - A_h)^{-1}P_h \,:\, z\in\mathbb{C},\,\, -\frac{\pi}{2}-\theta \le \arg(z) \le \frac{\pi}{2}-\theta \right\}$} \\[5pt]
\mbox{and}\quad
&\mbox{$\left\{z (z - A_h)^{-1}P_h \,:\, z\in\mathbb{C},\,\, -\theta+\frac{\pi}{2} \le \arg(z) \le \theta+\frac{\pi}{2} \right\}$} \quad\mbox{are both $R$-bounded},
\end{align*}
with an $R$-bound independent of $h$.
Since the union of the two $R$-bounded sets above is also $R$-bounded,
we obtain the third result of Theorem \ref{Resolvent-Laplacian}. 

It remains to prove Lemma \ref{MainLemma1}.

\subsection{Proof of Lemma \ref{MainLemma1}}


Lemma \ref{MainLemma1} can be proved in the same way as \cite[Theorem 2.1]{Li2019} by using the following lemma. Hence, the proof of Lemma \ref{LemGm2} is omitted. 

\begin{lemma}\label{LemGm2}
{\it
The functions $\Gamma_h^\theta(t,x,x_0)$, $\Gamma^\theta(t,x,x_0)$ and 
$F^\theta(t,x,x_0):=\Gamma_h^\theta(t,x,x_0)-\Gamma^\theta(t,x,x_0)$ satisfy 
\begin{align}
&\sup_{x_0\in\varOmega}\sup_{t\in(0,\infty)}\,(\|\Gamma_h^\theta(t,\cdot, x_0)\|_{L^1(\varOmega)}  
+t\|\partial_{t}\Gamma_h^\theta(t,\cdot, x_0)\|_{L^1(\varOmega)} ) \leq C , 
\label{L1Gammh}\\ 
&\sup_{x_0\in\varOmega}\sup_{t\in(0,\infty)}\,(\|\Gamma^\theta(t,\cdot, x_0)\|_{L^1(\varOmega)}  
+t\|\partial_{t}\Gamma^\theta(t,\cdot, x_0)\|_{L^1(\varOmega)} ) \leq C , 
\label{L1Gammh-2}\\ 
&\sup_{x_0\in\varOmega} \big( \|\partial_t F^\theta(\cdot,\cdot ,x_0)\|_{L^1((0,\infty)\times\varOmega)}  
+\|t\partial_{tt}F^\theta(\cdot,\cdot , x_0)\|_{L^1((0,\infty)\times\varOmega)} \big) \leq C ,
\label{L1Ft}\\
&
\sup_{x_0\in\varOmega} \|\partial_t\Gamma_h^\theta(t,\cdot, x_0)\|_{L^1}\leq Ce^{-\lambda_0t} , \qquad\forall\, t\ge 1, \label{L1Gammatx0}
\end{align} 
where the constants $C$ and $\lambda_0$ are independent of $h$. 
}
\end{lemma}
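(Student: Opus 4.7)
My plan is to handle the four estimates in order of increasing difficulty, relying on Gaussian bounds for continuous quantities and on a dyadic (Schatz--Wahlbin-type) decomposition around $x_0$ for the discrete quantities and the error, which is the standard strategy from \cite{Li2019}. Throughout, the complex angle $\theta\in(-\pi/2,\pi/2)$ plays no essential role: the complex Gaussian bounds \eqref{GausEst-phi-1} and the analyticity of $\Gamma_h^\theta$ with respect to $\theta$ reduce everything to the real case with constants depending only on $\mathrm{dist}(\theta,\pm\pi/2)$.

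First I would prove \eqref{L1Gammh-2}. Using the representation \eqref{expr-Gamma} together with the pointwise estimate \eqref{GausEstGamma}, the bound
\[
\int_\varOmega |\Gamma^\theta(t,x,x_0)|\,\d x
\leq C\int_\varOmega t^{-d/2}e^{-|x-x_0|^2/(Ct)}\,\d x\leq C
\]
is immediate, and multiplication by $t$ absorbs the extra factor $t^{-1}$ in $\partial_t$. Next, for \eqref{L1Gammh} and \eqref{L1Gammatx0} I would decompose $\varOmega$ into dyadic annuli $D_j=\{x : d_j\le|x-x_0|\le 2d_j\}$ with $d_j=2^{j}\max(h,\sqrt{t})$, plus an innermost ball $D_*$ of radius $\max(h,\sqrt{t})$. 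On each annulus I would apply Cauchy--Schwarz,
\[
\|\Gamma_h^\theta(t,\cdot,x_0)\|_{L^1(D_j)} \le |D_j|^{1/2}\|\Gamma_h^\theta(t,\cdot,x_0)\|_{L^2(D_j)},
\]
and control the $L^2$ norms by weighted local energy estimates for the parabolic FEM with coefficients in $W^{1,d+\beta}$, which give Gaussian-type decay $\|\Gamma_h^\theta\|_{L^2(D_j)}\le Ct^{-d/4}e^{-c d_j^2/t}$; summing over $j$ yields a geometric series bounded by $C$. The $\partial_t$ estimate follows by the same route applied to the equation satisfied by $\partial_t\Gamma_h^\theta$, gaining the factor $t^{-1}$. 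Estimate \eqref{L1Gammatx0} then uses the fact that the smallest eigenvalue of $-A_h$ is bounded below by a positive constant $\lambda_*$ uniformly in $h$ (since the continuous elliptic operator has a positive spectral gap and $A_h$ is its FEM approximation), which provides the decay $e^{-\lambda_0 t}$ for $t\ge 1$ via a spectral/semigroup argument combined with the $L^1$ bound already established on $[1/2,1]$.

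The genuine obstacle is \eqref{L1Ft}, since it controls the time derivative of the error $F^\theta=\Gamma_h^\theta-\Gamma^\theta$ in space-time $L^1$, which is stronger than a supremum-in-time bound. My plan is the same dyadic decomposition adapted to the error: writing $F^\theta$ as the solution of the finite element error equation with the regularized delta initial datum, I would split $\varOmega\times(0,\infty)$ into regions $\{(t,x): d_j\le |x-x_0|\le 2d_j\}\times\{t\sim 2^{2k}h^2\}$ and estimate $\partial_t F^\theta$ locally. The standard ingredients are: (i) the duality identity $F^\theta(t,x,x_0)=F^\theta(t,x_0,x)$, reducing pointwise error estimates to the study of a dual parabolic problem; (ii) the weighted $L^2$ error estimates of Schatz--Thom\'ee--Wahlbin type for the parabolic FEM with $W^{1,d+\beta}$ coefficients, which give $\|\partial_t F^\theta(t,\cdot,x_0)\|_{L^2(D_j)}\le Ch t^{-1-d/4}e^{-c d_j^2/t}$ on dyadic annuli with $d_j\gtrsim\sqrt{t}$, while for $d_j\lesssim\sqrt{t}$ one uses a global $H^1$ error estimate combined with the inverse inequality. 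Integration in $t$ contributes at most a logarithmic factor on each annulus, but this is cured by geometric summation over $j$ because the Gaussian exponential in $d_j^2/t$ beats any polynomial in $h/d_j$. For the $t\partial_{tt}F^\theta$ piece I would differentiate the error equation once in time and repeat the same decomposition, gaining the factor $t^{-1}$ from $\partial_{tt}$ that is precisely compensated by the weight $t$. The delicate point throughout is that the constants in the local energy and error estimates must be independent of $h$, which is where the $W^{1,d+\beta}$ regularity of the coefficients and the quasi-uniformity of the mesh enter; both are already assumed and the argument closely parallels the real-case version treated in \cite{Li2019}.
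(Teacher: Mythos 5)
Your overall strategy (dyadic annuli around $x_0$, Cauchy--Schwarz on each annulus, local energy estimates, and reduction of the complex case to the real one with $\theta$-dependent constants) is the right one and is essentially what the paper does: it reduces Lemma \ref{LemGm2} to the local energy error estimate of Lemma \ref{LocEEst} and the local Green's function bounds of Lemma \ref{GFEst1}, following \cite[Lemma 4.4]{Li2019}. However, two of your intermediate claims are not provable and would break the argument. First, you assert a Gaussian decay $\|\Gamma_h^\theta(t,\cdot,x_0)\|_{L^2(D_j)}\le Ct^{-d/4}e^{-cd_j^2/t}$ for the \emph{discrete} Green's function, obtained from ``weighted local energy estimates.'' Pointwise or annulus-wise Gaussian bounds for finite element heat kernels on quasi-uniform meshes are not available in this generality (this is a well-known obstruction), and local energy estimates do not produce them: what Lemma \ref{LocEEst} yields is control of the \emph{error} $F^\theta=\Gamma_h^\theta-\Gamma^\theta$ on $Q_j$ in terms of the local approximability of $\Gamma^\theta$, with only \emph{polynomial} decay in $h/d_j$ and $d_j$ inherited from Lemma \ref{GFEst1}, plus a lower-order term that must be removed by an iteration/duality step. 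The $L^1$ bound \eqref{L1Gammh} is then obtained from the triangle inequality $\|\Gamma_h^\theta\|_{L^1}\le\|\Gamma^\theta\|_{L^1}+\|F^\theta\|_{L^1}$, not from direct bounds on $\Gamma_h^\theta$; attempting to bound $\Gamma_h^\theta$ directly on the innermost region also forces you to invoke a discrete $L^1\!\to\!L^2$ smoothing estimate that is essentially equivalent to what you are trying to prove.

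Second, for \eqref{L1Ft} you claim $\|\partial_tF^\theta(t,\cdot,x_0)\|_{L^2(D_j)}\le Ch\,t^{-1-d/4}e^{-cd_j^2/t}$, i.e.\ a full power of $h$ together with Gaussian decay. A full power of $h$ presupposes $H^2$-type regularity of the Green's function up to the boundary, which fails in nonconvex polygons/polyhedra and for coefficients that are merely in $W^{1,d+\beta}$; the available rate is $h^\alpha$ with $\alpha\in(\frac12,1]$ coming from the $H^{1+\alpha}$ elliptic regularity of Lemma \ref{RegPoiss}, and this is precisely why Lemma \ref{GFEst1} is formulated in $H^{1+\alpha}$ norms and why the summation over annuli must be organized around the polynomial gain $(h/d_j)^{\alpha}$ rather than a Gaussian factor. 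Your remark that ``the Gaussian exponential in $d_j^2/t$ beats any polynomial in $h/d_j$'' therefore rests on decay that is not there; the actual convergence of the sum over $j$ comes from the balance between $d_j^{d/2}$ (from $|D_j|^{1/2}$), the negative powers of $d_j$ in \eqref{GFest01}--\eqref{GFest03}, and the factor $h^{\alpha}$, exactly as in \cite[Lemma 4.4]{Li2019}. To repair the proposal you would replace both Gaussian claims by the polynomial local error bounds delivered by Lemmas \ref{LocEEst} and \ref{GFEst1}.
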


Lemma \ref{LemGm2} can be proved in the same way as \cite[Lemma 4.4]{Li2019} by using the following two lemmas. Hence, the proof of Lemma \ref{LemGm2} is omitted. 

\begin{lemma}[Local energy error estimate for parabolic equations]\label{LocEEst} 
{\it 
If $\phi\in L^2(0,T;H^1_0)\cap H^1(0,T;L^2)$ and 
$\phi_h\in H^1(0,T; S_h)$ satisfy the equation 
\begin{align}  
(\partial_t(\phi-\phi_h),\chi_h)
+ e^{\i\theta} \sum_{i,j=1}^d (\partial_j  (\phi-\phi_h),\partial_i \chi_h) =0, 
\quad\forall\, \chi_h\in  S_h, \,\, \mbox{a.e.}\,\, t>0, 
\label{p32}  
\end{align} 
with $\phi(0)=0$ in $\varOmega_j''$. Then we have 
\begin{align} 
&\vertiii{\partial_t(\phi-\phi_h)}_{Q_j} 
+ d_j^{-1}\vertiii{\phi-\phi_h}_{1,Q_j} \notag\\
& \leq 
C\epsilon^{-3}\big(I_j(\phi_{h}(0))+X_j(I_h\phi-\phi) 
+d_j^{-2}\vertiii{\phi-\phi_h}_{Q_j'}\big) \notag\\
&\quad 
+(Ch^{1/2}d_j^{-1/2}+C\epsilon^{-1}hd_j^{-1}+\epsilon) \big(\vertiii{\partial_t(\phi-\phi_h)}_{Q_j'}
+d_j^{-1}\vertiii{\phi-\phi_h}_{1,Q_j'}\big), 
\label{LocEngErr}  
\end{align} 
where
\begin{align*}
&I_j(\phi_{h}(0))=\|\phi_{h}(0)\|_{1,\varOmega_j'} +d_j^{-1}\|\phi_{h}(0)\|_{\varOmega_j'} \, ,\\[5pt]
&X_j(I_h\phi-\phi)=d_j\vertiii{\partial_t(I_h\phi-\phi)}_{1,Q_j'}
+\vertiii{\partial_t(I_h\phi-\phi)}_{Q_j'} \notag\\
&\qquad\qquad\qquad\,\, 
+d_j^{-1}\vertiii{I_h\phi-\phi}_{1,Q_j'}+
d_j^{-2}\vertiii{I_h\phi-\phi}_{Q_j'}  \, ,
\end{align*}
where $\epsilon\in(0,1)$ is an arbitrary positive constant, and 
the positive constant $C$ is independent of $h$, $j$ and $C_*$; the norms $\ii\cdot\ii_{k,Q_j'}$ and $\ii\cdot\ii_{k,\varOmega_j'}$ are defined in 
{\rm\cite[(3.23)]{Li2019}}. 
}
\end{lemma}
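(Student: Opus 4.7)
The plan is to adapt the local energy argument of Schatz--Thom\'ee--Wahlbin, in the form developed in \cite[Lemma~4.2]{Li2019}, to the present setting in which the elliptic form carries the complex rotation $e^{\i\theta}$ and the coefficients $a_{ij}$ lie only in $W^{1,d+\beta}$. The key observation is that for $|\theta|<\pi/2$ the sesquilinear form remains accretive after taking the real part, namely
\begin{align*}
\mathrm{Re}\Bigl[e^{\i\theta}\sum_{i,j=1}^d (a_{ij}\partial_j v,\partial_i \overline v)\Bigr]\ge \lambda^{-1}\cos\theta\,\|\nabla v\|_{L^2}^2,
\end{align*}
so the usual weighted-energy argument furnishes coercivity with a constant depending on $\cos\theta$, hence uniformly bounded when $\theta$ is bounded away from $\pm\pi/2$.

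First I would introduce a smooth space-time cut-off function $\omega$ with $\omega\equiv 1$ on $Q_j$, $\mathrm{supp}(\omega)\subset Q_j'$, and $|\nabla^m\omega|\le C d_j^{-m}$. Writing $e=\phi-\phi_h$ and testing the Galerkin orthogonality \eqref{p32} with $\chi_h=I_h(\omega^2\overline{e})$, and then adding back the piece corresponding to the non-Galerkin test function $\omega^2\overline{e}$, one obtains an energy identity of the schematic form
\begin{align*}
\tfrac12\tfrac{d}{dt}\|\omega e\|_{L^2}^2+\mathrm{Re}\Bigl[e^{\i\theta}\sum_{i,j}(a_{ij}\partial_j e,\partial_i(\omega^2\overline{e}))\Bigr]=R_{\mathrm{int}}+R_{\mathrm{com}},
\end{align*}
where $R_{\mathrm{int}}$ gathers the interpolation remainder $\omega^2\overline{e}-I_h(\omega^2\overline{e})$ and $R_{\mathrm{com}}$ collects the commutator of $\omega^2$ with $A$. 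Expanding the gradient term and integrating by parts isolates $\|\omega\nabla e\|_{L^2}^2$ plus boundary contributions controlled by $d_j^{-1}\vertiii{e}_{Q_j'}$, while $R_{\mathrm{int}}$ is dispatched by the standard local interpolation estimate on each element, producing the factor $X_j(I_h\phi-\phi)$. Integrating in time and using $\phi(0)=0$ on $\varOmega_j''$ leaves only the discrete initial contribution $I_j(\phi_h(0))$.

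For the time-derivative norm $\vertiii{\partial_t e}_{Q_j}$, I would differentiate \eqref{p32} in $t$ and repeat the argument with test function $I_h(\omega^2\overline{\partial_t e})$. The principal term on the left becomes $\cos\theta\,\|\omega\nabla\partial_t e\|_{L^2}^2$, which, combined with $\omega\equiv1$ on $Q_j$, delivers the $d_j^{-1}\vertiii{e}_{1,Q_j}$ piece after a Poincar\'e-type manipulation; the right-hand side assembles the same structural terms one time-derivative higher, and a small part of them must be absorbed into the $\epsilon$-multiplied $\vertiii{\partial_t e}_{Q_j'}+d_j^{-1}\vertiii{e}_{1,Q_j'}$ on the enlarged cylinder via Young's inequality.

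The main technical obstacle is controlling the commutator $R_{\mathrm{com}}$ under the weak regularity $a_{ij}\in W^{1,d+\beta}\hookrightarrow C^{1-d/(d+\beta)}$. One must estimate terms of the form $((\partial_i a_{ij})(\partial_j e),\omega^2\overline e)$ and $((a_{ij}-a_{ij}(x_j^\star))\partial_j e,\partial_i\omega^2\overline e)$ using only $L^{d+\beta}$ integrability of $\nabla a_{ij}$; combined with H\"older continuity of the coefficients, with Sobolev embedding, and with inverse estimates on the difference $\omega^2-I_h\omega^2$, these produce precisely the prefactors $Ch^{1/2}d_j^{-1/2}$ and $C\epsilon^{-1}hd_j^{-1}$ multiplying the higher-order norms on $Q_j'$ in \eqref{LocEngErr}. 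A secondary point is to keep constants independent of $\theta\in(-\tfrac\pi2+\eta,\tfrac\pi2-\eta)$: every Young splitting must be arranged so that the only $\theta$-dependence enters through $1/\cos\theta$, which remains bounded on any fixed such interval.
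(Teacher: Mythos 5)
Your proposal follows essentially the same route as the paper, which omits the proof of this lemma and simply observes that the localized energy/superapproximation argument of \cite[Lemma 5.1]{Li2019} (the Schatz--Thom\'ee--Wahlbin technique you sketch, with cut-off functions, the interpolated test function $I_h(\omega^2\overline e)$, and the two testings yielding $\vertiii{\partial_t e}_{Q_j}$ and $d_j^{-1}\vertiii{e}_{1,Q_j}$) carries over once the factor $e^{\i\theta}$ is absorbed by taking real parts and invoking the $\cos\theta$-coercivity, exactly as you note. One correction to your last paragraph: the prefactors $Ch^{1/2}d_j^{-1/2}$ and $C\epsilon^{-1}hd_j^{-1}$ in \eqref{LocEngErr} come from superapproximation of the discrete part of the cut-off test function (i.e., from $\omega^2 v_h - I_h(\omega^2 v_h)$, not $\omega^2-I_h\omega^2$) together with inverse and trace-type estimates, not from commutators with $\nabla a_{ij}$ --- in divergence form the energy argument only ever uses boundedness and ellipticity of $a_{ij}$, so the $W^{1,d+\beta}$ regularity plays no role in this particular lemma and the terms you propose to estimate via $\nabla a_{ij}\in L^{d+\beta}$ never need to appear.
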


\begin{lemma}[Local estimates of the complex Green's function]\label{GFEst1}
{\it There exists $\alpha\in (\frac{1}{2},1]$ and $C>0$, independent of $h$ and $x_0$, such that the complex Green's function $G^\theta$ defined in \eqref{GFdef-complex} and the complex regularized Green's function $\Gamma^\theta$ defined in \eqref{RGFdef}
satisfy the following estimates:
\begin{align}
&d_j^{\frac{d}{2}-4-\alpha}\|\Gamma^\theta(\cdot,\cdot,x_0)\|_{L^\infty(Q_j(x_0))}
+d_j^{-4-\alpha}\ii\nabla\Gamma^\theta(\cdot,\cdot,x_0)\ii_{L^2(Q_j(x_0))}\notag\\
&\quad
+d_j^{-4} \ii \Gamma^\theta(\cdot,\cdot,x_0)\ii_{L^2H^{1+\alpha}(Q_j(x_0))}  +d_j^{-2} \ii \partial_{t} 
\Gamma^\theta(\cdot,\cdot,x_0) \ii_{L^2H^{1+\alpha}(Q_j(x_0))} \notag\\
&\quad 
+ \ii \partial_{tt}\Gamma^\theta(\cdot,\cdot,x_0) \ii_{L^2H^{1+\alpha}(Q_j(x_0))} 
\le Cd_j^{-\frac{d}{2}-4-\alpha}, \label{GFest01}\\[5pt] 
&\|G^\theta(\cdot,\cdot,x_0) \|_{L^{\infty}H^{1+\alpha}(\cup_{k\leq
j}Q_k(x_0))} 
+d_j^2\|\partial_tG^\theta(\cdot,\cdot,x_0) \|_{L^{\infty}H^{1+\alpha}(\cup_{k\leq
j}Q_k(x_0))} 
\le Cd_j^{-\frac{d}{2}-1-\alpha} \label{GFest03} .
\end{align}
}
\end{lemma}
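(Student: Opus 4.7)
\textbf{Proof plan for Lemma \ref{GFEst1}.} The strategy is to follow the template of the corresponding lemma in \cite{Li2019} (which handled the Laplacian), and to modify the arguments to accommodate the variable coefficients $a_{ij}\in W^{1,d+\beta}$. The two essential inputs are: (i) the pointwise Gaussian estimates \eqref{GausEst1}--\eqref{GausEstGamma} for $G^\theta$, $\Gamma^\theta$, and their time derivatives (which are already available for complex $z$ in $\Sigma_\varphi$ as recorded above), and (ii) interior elliptic regularity for the operator $A$, which upgrades integrated pointwise information into $H^{1+\alpha}$-bounds for some $\alpha\in(\tfrac12,1]$. The Sobolev embedding $W^{1,d+\beta}\hookrightarrow C^{1-\frac{d}{d+\beta}}$ gives H\"older continuity of the coefficients, and this is exactly what is needed so that a local Calder\'on--Zygmund/Schauder argument produces $H^{1+\alpha}$ estimates on interior regions. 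Throughout, the fact that the time-direction has been rotated by $e^{i\theta}$ enters only as a bounded prefactor, so it can be treated as a harmless perturbation.

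For the estimates on $\Gamma^\theta$ in \eqref{GFest01}, I would first use the Gaussian bound \eqref{GausEstGamma}, applied to $\Gamma^\theta$, $\partial_t\Gamma^\theta$ and $\partial_{tt}\Gamma^\theta$, and exploit that on the parabolic cylinder $Q_j(x_0)$ one has $|x-x_0|\sim d_j$ and $t\sim d_j^2$. This immediately yields the $L^\infty$-bound $\|\Gamma^\theta\|_{L^\infty(Q_j(x_0))}\lesssim d_j^{-d}$ and analogous bounds for its time derivatives, giving the first term of \eqref{GFest01}. Multiplying by $|Q_j(x_0)|^{1/2}\sim d_j^{(d+2)/2}$ then controls the corresponding $L^2$-norms. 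To obtain the spatial $H^{1+\alpha}$ bounds, I would use that away from $x_0$ the function $\Gamma^\theta(t,\cdot,x_0)$ satisfies
\begin{equation*}
A\,\Gamma^\theta(t,\cdot,x_0) \;=\; e^{-i\theta}\,\partial_t\Gamma^\theta(t,\cdot,x_0),
\end{equation*}
and apply interior elliptic regularity on the slightly smaller cylinder $Q_j(x_0)$ inside $Q_j'(x_0)$ using a smooth spatial cut-off; since $a_{ij}\in W^{1,d+\beta}$, one obtains $H^{1+\alpha}$-regularity locally with the right-hand side measured in $L^2$, with constants depending only on $\|a_{ij}\|_{W^{1,d+\beta}}$ and the cut-off. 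Integrating the resulting pointwise-in-time spatial inequality against the already-established $L^2$ bounds on $\partial_t\Gamma^\theta$ and $\partial_{tt}\Gamma^\theta$ delivers the three $H^{1+\alpha}$-terms in \eqref{GFest01}.

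For the estimate \eqref{GFest03} on $G^\theta$, I would carry out essentially the same procedure on the annular union $\cup_{k\le j}Q_k(x_0)$, which lies at distance $\gtrsim d_j$ from $x_0$ and on which the Gaussian estimate \eqref{GausEst1} gives $|G^\theta|\lesssim d_j^{-d}$, with analogous control of $\partial_t G^\theta$. Interior elliptic regularity of $A G^\theta = e^{-i\theta}\partial_t G^\theta$, again using the $W^{1,d+\beta}$ hypothesis, then produces $H^{1+\alpha}$ control with the stated $d_j^{-d/2-1-\alpha}$ scaling; the loss of one power of $d_j$ compared to \eqref{GFest01} accounts for the different normalization (delta function versus regularized delta).

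The main obstacle is the interior $H^{1+\alpha}$ regularity step with $\alpha>\tfrac12$. With $a_{ij}$ only in $W^{1,d+\beta}$, one cannot rely on classical $C^{1,\sigma}$ Schauder theory for the coefficients, and the best interior regularity comes from a freezing-of-coefficients/perturbation argument of the kind used in Lemma \ref{Lemma:W2q-aij}: on a small ball one rewrites $A=L_0+B$ with $L_0$ having constant coefficients and $B$ small in $L^\infty$ on that ball (by H\"older continuity of $a_{ij}$), then inverts $L_0$ on $H^{1+\alpha}$ and absorbs the $B$-contribution. Carrying this construction out on each $Q_j(x_0)$ in a scale-invariant way, with bounds depending only on $\|a_{ij}\|_{W^{1,d+\beta}}$ and not on $d_j$, is the delicate point; the choice $\alpha\in(\tfrac12,1]$ is dictated by what this perturbation argument allows, matching the threshold needed to ensure $\alpha>\tfrac12$ in the subsequent application through Lemma \ref{LocEEst}.
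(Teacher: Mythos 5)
Your plan matches the paper's: the paper proves Lemma \ref{GFEst1} simply by asserting it follows \cite[Lemma 4.1]{Li2019} once the $H^{1+\alpha}$ elliptic regularity for $A$ with $a_{ij}\in W^{1,d+\beta}$ coefficients is supplied, and it supplies exactly the freezing-of-coefficients perturbation argument you describe (Lemma \ref{RegPoiss}, together with the multiplier bound of Lemma \ref{Lemma:multiplier-H-alpha}). The only packaging difference is that the paper establishes a single global estimate $\|v\|_{H^{1+\alpha}}\le C\|g\|_{H^{-1+\alpha}}$ using a covering at a fixed radius $\varepsilon$ depending only on $\|a_{ij}\|_{W^{1,d+\beta}}$, rather than a scale-invariant interior estimate on each $Q_j(x_0)$, which is how it sidesteps the uniformity-in-$d_j$ issue you correctly flag as the delicate point.
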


The only difference between Lemma \ref{LemGm2} and \cite[Lemma 5.1]{Li2019} is the presence of $e^{\i\theta}$ in \eqref{p32}. This does not affect the proof of Lemma \ref{LocEEst}. Hence, the proof of Lemma \ref{LemGm2} is also omitted. 
Lemma \ref{GFEst1} can be proved in the same way as \cite[Lemma 4.1]{Li2019} based on the following regularity result. 

\begin{lemma}\label{RegPoiss}
{\it There exists a positive constant $\alpha\in(\frac{1}{2},1]$ such that the solution of the elliptic equation 
\begin{align} \label{H1+alpha-regularlity}
\left\{\begin{array}{ll} 
A v = g 
&\mbox{in}\,\,\,\varOmega, \\[5pt] 
\varphi=0 
&\mbox{on}\,\,\,\partial\varOmega, 
\end{array}\right.  
\end{align} 
satisfies 
\begin{align*} 
\|v\|_{H^{1+\alpha}} 
\leq C\|g\|_{H^{-1+\alpha}} . 
\end{align*} 
}
\end{lemma}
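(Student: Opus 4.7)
The plan is to establish Lemma \ref{RegPoiss} by combining a known shift theorem for constant-coefficient elliptic operators in polygonal and polyhedral domains with a localization and perturbation argument that exploits the H\"older continuity of the coefficients coming from $a_{ij}\in W^{1,d+\beta}\hookrightarrow C^{\sigma}$ with $\sigma=1-\tfrac{d}{d+\beta}>0$.

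First I would recall (or cite) the classical shift theorem for the Dirichlet problem of a constant-coefficient second-order elliptic operator in $\varOmega$: for any fixed $x_0\in\overline{\varOmega}$ the operator $L_0:=\sum_{i,j=1}^d a_{ij}(x_0)\partial_i\partial_j$ can be reduced to the Laplacian by a non-degenerate linear change of variables, which maps $\varOmega$ to another polygon or polyhedron. Results of Grisvard, Kondrat'ev, Dauge and Jerison--Kenig then yield an exponent $\alpha_0\in(\tfrac12,1]$, depending only on the largest interior angle (and edge/vertex opening for $d=3$), such that
\[
L_0:\;H^{1+\alpha_0}(\varOmega)\cap H^1_0(\varOmega)\;\longrightarrow\; H^{-1+\alpha_0}(\varOmega)
\]
is an isomorphism, with operator norm and inverse norm uniform in $x_0\in\overline{\varOmega}$ thanks to uniform ellipticity.

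Next I would pass to variable coefficients by a freezing argument of the type already used in Lemma \ref{Lemma:W2q-aij}. Fix a small parameter $\varepsilon>0$ and cover $\overline{\varOmega}$ by finitely many balls $\{B_\varepsilon(x_k)\}$ with a smooth subordinate partition of unity $\{\omega_k\}$. Write $A=L_k+B_k$ on the support of $\omega_k$, where $L_k$ has coefficients $a_{ij}(x_k)$ and $B_k\varphi=\sum_{i,j}\partial_i\bigl((a_{ij}-a_{ij}(x_k))\partial_j\varphi\bigr)-\sum_{i,j}(\partial_i a_{ij})\partial_j\varphi$. The H\"older bound $|a_{ij}-a_{ij}(x_k)|\le C\varepsilon^\sigma$ on $B_\varepsilon(x_k)$ and the Sobolev multiplication/duality property of $W^{1,d+\beta}$ on $H^{1+\alpha}$ and $H^{-1+\alpha}$ show that, for any $\alpha\in(\tfrac12,\min(\alpha_0,\sigma))$, $B_k$ is bounded from $H^{1+\alpha}\cap H^1_0$ to $H^{-1+\alpha}$ with small norm, of order $\varepsilon^{\sigma}$ (up to the quantity $\|\nabla a_{ij}\|_{L^{d+\beta}}$ which is absorbed by a Sobolev embedding argument when $\alpha<\sigma$). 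For the energy solution $v\in H^1_0$ of $Av=g$ the localization $\omega_k v$ satisfies
\[
L_k(\omega_k v)=\omega_k g + B_k(\omega_k v)+R_k[v],
\]
where $R_k[v]$ collects the commutators $[L_k,\omega_k]v$; these contain only $v$ and $\nabla v$ multiplied by derivatives of $\omega_k$, hence $R_k[v]\in L^2\hookrightarrow H^{-1+\alpha}$ with norm $C(\varepsilon)\|v\|_{H^1}$.

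Finally, choose $\varepsilon$ so small (depending only on $\|a_{ij}\|_{W^{1,d+\beta}}$ and the inverse of the constant-coefficient operator) that $\|L_k^{-1}\|\cdot\|B_k\|<\tfrac12$ uniformly in $k$. A Neumann series then gives $\omega_k v\in H^{1+\alpha}$ with
\[
\|\omega_k v\|_{H^{1+\alpha}}\le C\bigl(\|g\|_{H^{-1+\alpha}}+\|v\|_{H^1}\bigr),
\]
and summing over $k$ together with the standard energy estimate $\|v\|_{H^1}\le C\|g\|_{H^{-1}}\le C\|g\|_{H^{-1+\alpha}}$ produces the desired bound $\|v\|_{H^{1+\alpha}}\le C\|g\|_{H^{-1+\alpha}}$. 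The main technical obstacle is the third paragraph: verifying that the variable-coefficient perturbation $B_k$ truly has small operator norm $H^{1+\alpha}\to H^{-1+\alpha}$. In particular the term $(\partial_i a_{ij})\partial_j\varphi$ requires a Sobolev pointwise multiplication estimate that is valid only when $\alpha$ is chosen below $\sigma=1-d/(d+\beta)$, which is exactly what the assumption $\beta>0$ buys us and is the place where the regularity hypothesis on $a_{ij}$ is fully exploited.
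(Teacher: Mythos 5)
Your overall strategy --- freeze the coefficients at a point, localize with a cut-off, treat the variable-coefficient part as a perturbation of a constant-coefficient operator for which the $H^{1+\alpha}$ shift theorem is known, and absorb the perturbation for small $\varepsilon$ --- is exactly the strategy of the paper. However, there is a genuine gap at the step you yourself flag as the main obstacle, and it is fatal as written. You restrict $\alpha<\sigma=1-\tfrac{d}{d+\beta}=\tfrac{\beta}{d+\beta}$ in order to make the perturbation $B_k$ bounded from $H^{1+\alpha}$ to $H^{-1+\alpha}$ via a H\"older-multiplier estimate. But the lemma requires $\alpha\in(\tfrac12,1]$ (and this is essential downstream, in Lemma \ref{GFEst1}), while $\sigma>\tfrac12$ holds only when $\beta>d$. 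For the hypothesis actually assumed in the paper --- an arbitrary $\beta>0$, hence possibly $\sigma$ arbitrarily small --- your admissible interval $(\tfrac12,\min(\alpha_0,\sigma))$ is empty. The assumption $\beta>0$ buys you $\sigma>0$, not $\sigma>\tfrac12$, so the claim that this is ``exactly what $\beta>0$ buys us'' does not close the argument.

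The paper avoids this restriction by not relying on the H\"older seminorm of $a_{ij}$ at all. It keeps the perturbation in divergence form, so its $H^{-1+\alpha}$ norm is controlled by $\|(a_{ij}(x_0)-a_{ij})\omega_{2\varepsilon}\,\partial_i(\omega_\varepsilon v)\|_{H^{\alpha}}$, and then invokes the multiplier estimate $\|wv\|_{H^{\alpha}}\le C\|w\|_{W^{1,d+\beta/2}}\|v\|_{H^{\alpha}}$ (Lemma \ref{Lemma:multiplier-H-alpha}), which is valid for \emph{every} $\alpha\in[0,1]$ because it is obtained by complex interpolation between the trivial $L^2$ bound and the $H^1$ product-rule bound. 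The smallness of the perturbation then comes from $\|(a_{ij}(x_0)-a_{ij})\omega_{2\varepsilon}\|_{W^{1,d+\beta/2}}\le C\varepsilon^{\mu}$ for some $\mu>0$: the gradient contribution $\|\nabla a_{ij}\,\omega_{2\varepsilon}\|_{L^{d+\beta/2}}$ is small because $\nabla a_{ij}\in L^{d+\beta}$ with a strictly larger integrability exponent than $d+\beta/2$, so H\"older's inequality on the small support of $\omega_{2\varepsilon}$ yields a positive power of $\varepsilon$. In other words, the smallness is extracted from the integrability gap $d+\beta$ versus $d+\beta/2$, not from the H\"older exponent $\sigma$, and this decouples the choice of $\alpha$ from $\beta$. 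To repair your proof you would need to replace your H\"older-based bound on $B_k$ by such an interpolation-based multiplier estimate; the remaining commutator terms $R_k[v]$ are handled as you describe.
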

\begin{proof}
When $a_{ij}=a_{ji}=$ constants, Lemma \ref{RegPoiss} holds as explained in \cite[Lemma 4.1]{Li2019}. When $a_{ij}=a_{ji}\in W^{1,d+\beta}$, it can be proved by using a perturbation argument as shown below. 

In fact, for any $x_0\in\overline\varOmega$ we can introduce a smooth cut-off function $\omega_\varepsilon$ satisfying \eqref{properties-omega}--\eqref{aijx-aijx0}, and reformulate equation \eqref{H1+alpha-regularlity} into 
\begin{align}\label{Eq-L-2}
\left\{
\begin{aligned}
&\sum_{i,j=1}^d \partial_j \big[ a_{ij}(x_0) \partial_i (\omega_\varepsilon v) \big]
= g_{\varepsilon}
&&\mbox{in}\,\,\,\varOmega,\\
&\omega_\varepsilon v=0 &&\mbox{on}\,\,\,\partial\varOmega,
\end{aligned}
\right.
\end{align}
with 
$$
g_{\varepsilon}
= 
\omega_\varepsilon g 
+ \sum_{i,j=1}^d  \big[ 2\partial_j (a_{ij} v) \partial_i \omega_\varepsilon 
-\partial_j a_{ij} v \partial_i \omega_\varepsilon  \big]
+ \sum_{i,j=1}^d \partial_j \big[ (a_{ij}(x_0)-a_{ij}) \omega_{2\varepsilon}  \partial_i (\omega_\varepsilon v) \big] . 
$$
Since the left-hand side of \eqref{Eq-L-2} has constant coefficients, we can apply the result of Lemma \ref{RegPoiss} and obtain 
\begin{align*}
&\| \omega_\varepsilon v \|_{H^{1+\alpha}} \notag \\[8pt]
&\le
C\| g_{\varepsilon} \|_{H^{-1+\alpha}} \notag \\ 
&\le
C\| g \|_{H^{-1+\alpha}}
+ \sum_{i,j=1}^d 
\big( \| a_{ij} \, v \|_{H^\alpha}
+\| \partial_j a_{ij} \, v \|_{L^2} \big)
+ C\sum_{i,j=1}^d \| (a_{ij}(x_0)-a_{ij}) \omega_{2\varepsilon}   \partial_i (\omega_\varepsilon v)\|_{H^\alpha} \notag \\
&\le
C\| g \|_{H^{-1+\alpha}} + C\| v \|_{H^1}
+ C\sum_{i,j=1}^d \|(a_{ij}(x_0)-a_{ij}) \omega_{2\varepsilon} \|_{W^{1,d+\beta/2}} 
\|\omega_\varepsilon v\|_{H^{1+\alpha}} 
\,\,\,\mbox{(see Lemma \ref{Lemma:multiplier-H-alpha})} .
\end{align*} 
By using properties \eqref{properties-omega}--\eqref{aijx-aijx0} and H\"older's inequality, we have 
\begin{align*}
&\|(a_{ij}(x_0)-a_{ij}) \omega_{2\varepsilon} \|_{W^{1,d+\beta/2}} \\
&\le
\|(a_{ij}(x_0)-a_{ij}) \omega_{2\varepsilon} \|_{L^{d+\beta/2}}
+
\|\nabla a_{ij} \omega_{2\varepsilon} \|_{L^{d+\beta/2}}
+
\| (a_{ij}(x_0)-a_{ij}) \nabla \omega_{2\varepsilon} \|_{L^{d+\beta/2}} \\
&\le
C\varepsilon^{1-\frac{d}{d+\beta}} \varepsilon^{\frac{d}{d+\beta/2}} 
+
\|\nabla a_{ij} \|_{L^{d+\beta}} \|\omega_{2\varepsilon}\|_{L^s}
+
C\varepsilon^{1-\frac{d}{d+\beta}}  \varepsilon^{-1} \varepsilon ^{\frac{d}{d+\beta/2}}
\quad\mbox{with $\frac{1}{s}=\frac{1}{d+\beta/2}-\frac{1}{d+\beta}$}, \\
&\le
C\varepsilon^{\frac{1}{d+\beta/2}-\frac{1}{d+\beta}} .
\end{align*}
Combining the last two estimates, we obtain
\begin{align}\label{omega-v-H1+alpha}
\| \omega_\varepsilon v \|_{H^{1+\alpha}}
&\le
C\| g \|_{H^{-1+\alpha}}
+ C\| v \|_{H^1} + C
\varepsilon^{\frac{d}{d+\beta/2}-\frac{d}{d+\beta}} 
\|\omega_\varepsilon v\|_{H^{1+\alpha}} .
\end{align} 
By choosing a sufficiently small $\varepsilon$, the last term above can be absorbed by the left-hand side. Hence, we obtain
\begin{align*}
\| \omega_\varepsilon v \|_{H^{1+\alpha}} 
&\le
C\| g \|_{H^{-1+\alpha}}
+ C\| v \|_{H^1} .
\end{align*} 
Since $\| v \|_{H^1}\le C\| g \|_{H^{-1}}\le \| g \|_{H^{-1+\alpha}}$ (standard $H^1$ estimate of elliptic equations), it follows that 
\begin{align*}
\| \omega_\varepsilon v \|_{H^{1+\alpha}} 
&\le
C\| g \|_{H^{-1+\alpha}} .
\end{align*} 
Since this estimate holds in a neighborhood $B_{\varepsilon}(x_0)\cap \varOmega$ of every point $x_0\in\varOmega$ with a constant radius $\varepsilon$ (depending on $\|a_{ij}\|_{W^{1,d+\beta}}$), it follows that 
\begin{align*}
\|v \|_{H^{1+\alpha}} 
\le
C\| g \|_{H^{-1+\alpha}} . 
\end{align*} 
This proves Lemma \ref{RegPoiss}. 
In this proof, we have used the following lemma. 
\end{proof}

\begin{lemma}\label{Lemma:multiplier-H-alpha}
If $w\in W^{1,d+\beta/2}$, then the following inequality holds: 
\begin{align}\label{wv-H-alpha}
 \| w v\|_{H^{\alpha}} 
\le
C \| w \|_{W^{1,d+\beta/2}} 
\|v\|_{H^{\alpha}} \quad\forall\, v\in H^\alpha\,\,\,\mbox{and}\,\,\,\alpha\in[0,1]. 
\end{align}
\end{lemma}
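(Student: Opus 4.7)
The plan is to establish the multiplication bound at the two endpoints $\alpha=0$ and $\alpha=1$, and then deduce the intermediate range by complex interpolation of the multiplication operator $M_w: v\mapsto wv$ acting on $H^\alpha=[L^2,H^1]_\alpha$.

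For $\alpha=0$, the inequality reduces to $\|wv\|_{L^2}\le \|w\|_{L^\infty}\|v\|_{L^2}$, and Sobolev embedding yields $W^{1,d+\beta/2}\hookrightarrow L^\infty$ because $d+\beta/2>d$. For $\alpha=1$, I apply the Leibniz rule $\nabla(wv)=w\nabla v+v\nabla w$. The first term is controlled by $\|w\|_{L^\infty}\|\nabla v\|_{L^2}\le C\|w\|_{W^{1,d+\beta/2}}\|v\|_{H^1}$. For the second term, H\"older's inequality gives
\begin{equation*}
\|v\nabla w\|_{L^2}\le \|v\|_{L^r}\|\nabla w\|_{L^{d+\beta/2}},\qquad \tfrac{1}{r}=\tfrac{1}{2}-\tfrac{1}{d+\beta/2}.
\end{equation*}
Since $d\in\{2,3\}$ and $\beta>0$, one checks $r<2d/(d-2)$ when $d=3$ (and $r<\infty$ when $d=2$), so the Sobolev embedding $H^1\hookrightarrow L^r$ holds and provides $\|v\|_{L^r}\le C\|v\|_{H^1}$. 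Combining the two contributions yields $\|wv\|_{H^1}\le C\|w\|_{W^{1,d+\beta/2}}\|v\|_{H^1}$.

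Having proved boundedness of $M_w$ on both $L^2$ and $H^1$ with the same constant $C\|w\|_{W^{1,d+\beta/2}}$, I invoke complex interpolation between these two Hilbert spaces: since $H^\alpha=[L^2,H^1]_\alpha$ for $\alpha\in[0,1]$ (the standard identification on Lipschitz domains), the operator norm of $M_w$ on $H^\alpha$ is controlled by the geometric mean of its norms on $L^2$ and $H^1$, giving the desired estimate \eqref{wv-H-alpha}.

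The only potentially delicate step is the $\alpha=1$ bound, where one must verify that the exponent $r$ produced by H\"older's inequality falls inside the Sobolev window in three dimensions; this is where the hypothesis $\beta>0$ (rather than merely $a_{ij}\in W^{1,d}$) is essential, guaranteeing $r<6$ strictly. The rest of the argument is a routine interpolation.
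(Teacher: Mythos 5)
Your proposal is correct and follows essentially the same route as the paper: the endpoint bounds at $\alpha=0$ (via $W^{1,d+\beta/2}\hookrightarrow L^\infty$) and $\alpha=1$ (via the Leibniz rule, H\"older with exponent $1/r=1/2-1/(d+\beta/2)$, and the Sobolev embedding $H^1\hookrightarrow L^r$), followed by complex interpolation of the multiplication operator. Your remark that $\beta>0$ is exactly what places $r$ strictly inside the Sobolev window when $d=3$ is a correct and worthwhile observation, matching the paper's choice of exponent.
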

\begin{proof}
Note that 
\begin{align*}
&
\| w v\|_{L^2} 
\le
C \| w \|_{L^\infty} \| v\|_{L^2} 
\le
C \| w \|_{W^{1,d+\beta/2}} \| v \|_{L^2} 
\quad\mbox{because $W^{1,d+\beta/2}\hookrightarrow L^\infty$},
\end{align*}
and
\begin{align*}
\| \nabla(w v)\|_{L^2} 
&\le
C ( \| w  \nabla v\|_{L^2} 
+ \| \nabla w \, v\|_{L^2}  ) \\
&\le
C ( \| w \|_{L^\infty} \| \nabla v\|_{L^2} 
+ \| \nabla w \|_{L^{d+\beta/2}} \| v\|_{L^s}  ) \quad\mbox{with}\,\,\, s=\frac{1}{\frac12-\frac{1}{d+\beta/2} }<\frac{2d}{d-2}\\
&\le
C \| w \|_{W^{1,d+\beta/2}} \| v\|_{H^1} .
\end{align*}
where the last inequality is due to the Sobolev embedding $H^1\hookrightarrow L^s$ for $1\le s<2d/(d-2)$. 
Therefore, \eqref{wv-H-alpha} holds for both $\alpha=0$ and $\alpha=1$. Since multiplying $v$ by $w$ is a linear operator on $v$, bounded on both $L^2$ and $H^1$, by the complex interpolation method this operator must also be bounded on $H^{\alpha}$ for $\alpha\in[0,1]$. This proves Lemma \ref{Lemma:multiplier-H-alpha}. 
\end{proof}
\medskip

The proof of Theorem \ref{Resolvent-Laplacian} is complete.



\def\baselinestretch{0.95}
{\small
\bibliographystyle{abbrv}
\bibliography{max-reg}

\begin{thebibliography}{10}

\bibitem{AkrivisLi2018}
G.~Akrivis and B.~Li.
\newblock Maximum norm analysis of implicit--explicit backward difference
  formulas for nonlinear parabolic equations.
\newblock {\em IMA J. Numer. Anal.}, 38:75--101, 2018.

\bibitem{AkrivisLiLubich2017}
G.~Akrivis, B.~Li, and C.~Lubich.
\newblock {Combining maximal regularity and energy estimates for time
  discretizations of quasilinear parabolic equations}.
\newblock {\em Math. Comp.}, 86:1527--1552, 2017.

\bibitem{Amann1995}
H.~Amann.
\newblock {\em {Linear and Quasilinear Parabolic Problems. Volume I: Abstract
  linear theory.}}
\newblock Birkh\"auser Boston Inc., Boston, MA, 1995.

\bibitem{ABHN}
W.~Arendt, C.~J. Batty, M.~Hieber, and F.~Neubrander.
\newblock {\em {Vector-valued Laplace Transforms and Cauchy Problems, Second
  Edition}}.
\newblock Birkh\"auser, second edition, 2011.

\bibitem{AshyralyevPiskarevWeis2002}
A.~Ashyralyev, S.~Piskarev, and L.~Weis.
\newblock {On well-posedness of difference schemes for abstract parabolic
  equations in $L_p([0, T ]; E)$ spaces}.
\newblock {\em Numer. Funct. Anal. Optim.}, 23:669--693, 2002.

\bibitem{Blunck2001}
S.~Blunck.
\newblock {Maximal regularity of discrete and continuous time evolution
  equations}.
\newblock {\em Stud. Math.}, 146:157--176, 2001.

\bibitem{ClementPruss1992}
P.~Cl\'ement and J.~Pr\"uss.
\newblock {Global existence for a semilinear prabolic Volterra equation}.
\newblock {\em Math. Z.}, 209:17--26, 1992.

\bibitem{Dauge1992}
M.~Dauge.
\newblock Neumann and mixed problems on curvilinear polyhedra.
\newblock {\em Integral Equ. Oper. Theory}, 15:227--261, 1992.

\bibitem{Davis1989}
E.~B. Davies.
\newblock {\em {Heat Kernels and Spectral Theory}}.
\newblock Cambridge University Press, Cambridge, 1989.

\bibitem{Geissert2006}
M.~Geissert.
\newblock {Discrete maximal $L^p$ regularity for finite element operators}.
\newblock {\em SIAM J. Numer. Anal.}, 44:677--698, 2006.

\bibitem{Geissert2007}
M.~Geissert.
\newblock {Applications of discrete maximal $L^p$ regularity for finite element
  operators}.
\newblock {\em Numer. Math.}, 108:121--149, 2007.

\bibitem{Grafakos2008}
L.~Grafakos.
\newblock {\em {Classical Fourier Analysis}}.
\newblock Graduate Texts in Mathematics 249. Second edition, 2008.

\bibitem{GruterWidman1982}
M.~Gr\"uter and K.~Widman.
\newblock {The Green function for uniformly elliptic equations}.
\newblock {\em Manuscripta Math.}, 32:303--342, 1982.

\bibitem{Hairer-Wanner-1996}
E.~Hairer and G.~Wanner.
\newblock {\em {Solving Ordinary Differential Equations II}}.
\newblock Springer Series in Computational Mathematics 14. Second revised
  edition, 1996.

\bibitem{Jia-Li-Wang-2010}
H.~Jia, D.~Li, and L.~Wang.
\newblock Global regularity for divergence form elliptic equations on
  quasiconvex domains.
\newblock {\em J. Differ. Equations}, 249:3132--3147, 2010.

\bibitem{Jin-Li-Zhou-2017-BDF}
B.~Jin, B.~Li, and Z.~Zhou.
\newblock {Correction of high-order BDF convolution quadrature for fractional
  evolution equations}.
\newblock {\em SIAM J. Sci. Comput.}, 39:A3129--A3152, 2017.

\bibitem{JinLiZhou-MaxLp}
B.~Jin, B.~Li, and Z.~Zhou.
\newblock Discrete maximal regularity of time-stepping schemes for fractional
  evolution equations.
\newblock {\em Numer. Math.}, 138:101--131, 2018.

\bibitem{Kashiwabara-Kemmochi-2020}
T.~Kashiwabara and T.~Kemmochi.
\newblock {Stability, analyticity, and maximal regularity for parabolic finite
  element problems on smooth domains}.
\newblock {\em Math. Comp.}, 89:1647--1679, 2020.

\bibitem{KemmochiSaito}
T.~Kemmochi and N.~Saito.
\newblock {Discrete maximal regularity and the finite element method for
  parabolic equations}.
\newblock {\em Numer. Math.}, 138:905--937, 2018.

\bibitem{KovacsLiLubich2016}
B.~Kov\'acs, B.~Li, and C.~Lubich.
\newblock A-stable time discretizations preserve maximal parabolic regularity.
\newblock {\em SIAM J. Numer. Anal.}, 54:3600--3624, 2016.

\bibitem{KunstmanLiLubich2018}
P.~C. Kunstmann, B.~Li, and C.~Lubich.
\newblock {Runge-Kutta time discretization of nonlinear parabolic equations
  studied via discrete maximal parabolic regularity}.
\newblock {\em Found. Comput. Math.}, 18:1109--1130, 2018.

\bibitem{Leykekhman2004}
D.~Leykekhman.
\newblock {Pointwise localized error estimates for parabolic finite element
  equations}.
\newblock {\em Numer. Math.}, 96:583--600, 2004.

\bibitem{Leykekhman-Vexler-2016}
D.~Leykekhman and B.~Vexler.
\newblock {A priori error estimates for three dimensional parabolic optimal
  control problems with pointwise control}.
\newblock {\em SIAM J. Control Optim.}, 54:2403--2435, 2016.

\bibitem{LeykekhmanVexler2016-NM}
D.~Leykekhman and B.~Vexler.
\newblock {Discrete maximal parabolic regularity for Galerkin finite element
  methods}.
\newblock {\em Numer. Math.}, 135:923--952, 2017.

\bibitem{Leykekhman-Vexler-Walter-2019}
D.~Leykekhman, B.~Vexler, and D.~Walter.
\newblock {Numerical analysis of sparse initial data identification for
  parabolic problems}.
\newblock {\em ESAIM: M2AN}, 29:88--114, 2019.

\bibitem{Li2015}
B.~Li.
\newblock Maximum-norm stability and maximal ${L}^p$ regularity of {FEM}s for
  parabolic equations with {L}ipschitz continuous coefficients.
\newblock {\em Numer. Math.}, 131:489--516, 2015.

\bibitem{Li2019}
B.~Li.
\newblock Analyticity, maximal regularity and maximum-norm stability of
  semi-discrete finite element solutions of parabolic equations in nonconvex
  polyhedra.
\newblock {\em Math. Comp.}, 88:1--44, 2019.

\bibitem{LiSun2015-regularity}
B.~Li and W.~Sun.
\newblock Regularity of the diffusion-dispersion tensor and error analysis of
  {FEM}s for a porous media flow.
\newblock {\em SIAM J. Numer. Anal.}, 53:1418--1437, 2015.

\bibitem{LiSun2017-MCOM}
B.~Li and W.~Sun.
\newblock Maximal ${L}^p$ analysis of finite element solutions for parabolic
  equations with nonsmooth coefficients in convex polyhedra.
\newblock {\em Math. Comp.}, 86:1071--1102, 2017.

\bibitem{LiSun2017-SINUM}
B.~Li and W.~Sun.
\newblock Maximal regularity of fully discrete finite element solutions of
  parabolic equations.
\newblock {\em SIAM J. Numer. Anal.}, 55:521--542, 2017.

\bibitem{LiYang2015}
B.~Li and C.~Yang.
\newblock Uniform {BMO} estimate of parabolic equations and global
  well-posedness of the thermistor problem.
\newblock {\em Forum Math. Sigma}, 3(e26), 2015.

\bibitem{NitscheWheeler1982}
J.~A. Nitsche and M.~F. Wheeler.
\newblock {$L_\infty$-boundedness of the finite element Galerkin operator for
  parabolic problems}.
\newblock {\em Numer. Funct. Anal. Optim.}, 4:325--353, 1982.

\bibitem{Ouhabaz1995}
E.-M. Ouhabaz.
\newblock {Gaussian estimates and holomorphy of semigroups}.
\newblock {\em Proc. Amer. Math. Soc.}, 123:1465--1474, 1995.

\bibitem{SchatzThomeeWahlbin1980}
A.~H. Schatz, V.~Thom\'{e}e, and L.~B. Wahlbin.
\newblock {Maximum norm stability and error estimates in parabolic finite
  element equations}.
\newblock {\em Comm. Pure Appl. Math.}, 33:265--304, 1980.

\bibitem{SchatzThomeeWahlbin1998}
A.~H. Schatz, V.~Thom\'{e}e, and L.~B. Wahlbin.
\newblock {Stability, analyticity, and almost best approximation in maximum
  norm for parabolic finite element equations}.
\newblock {\em Comm. Pure Appl. Math.}, 51:1349--1385, 1998.

\bibitem{Thomee2006}
V.~Thom\'{e}e.
\newblock {\em {Galerkin Finite Element Methods for Parabolic Problems}}.
\newblock Springer-Verlag, New York, second edition, 2006.

\bibitem{ThomeeWahlbin2000}
V.~Thom\'{e}e and L.~B. Wahlbin.
\newblock {Stability and analyticity in maximum-norm for simplicial Lagrange
  finite element semidiscretizations of parabolic equations with Dirichlet
  boundary conditions}.
\newblock {\em Numer. Math.}, 87:373--389, 2000.

\bibitem{Weis2001-1}
L.~Weis.
\newblock {\em {\rm A new approach to maximal $L_p$-regularity}. Lecture Notes
  in Pure and Applied Mathematics {\rm 215:} Evolution Equations and Their
  Applications in Physical and Life Sciences, {\rm pp. 195--214, edited by G.
  Lumer and L. Weis}}.
\newblock Marcel Dekker Publishing, New York, 2001.

\bibitem{Weis2001-2}
L.~Weis.
\newblock {Operator-valued Fourier multiplier theorems and maximal
  $L_p$-regularity}.
\newblock {\em Math. Ann.}, 319:735--758, 2001.

\end{thebibliography}

}
\end{document}